\theoremstyle{plain}
\newtheorem{thm}{Theorem}[section]
\newtheorem{lemma}{Lemma}[section]
\newtheorem{cor}[thm]{Corollary}
\newtheorem{prop}[thm]{Proposition}
\theoremstyle{remark}
\newtheorem*{assumption*}{Assumption}
\newtheorem{condition}{Condition}[section]
\newcommand{\mbf}{\mathbf}
\newcommand{\sbf}{\boldsymbol}
\newcommand{\bbG}{\mathbb{G}}
\newcommand{\bbR}{\mathbb{R}}
\begin{document}

\begin{frontmatter}
%%%%%%%%%%%%%%%%%%%%%%%%%%%%%%%%%%%%%%%%%%%%%%
%%                                          %%
%% Enter the title of your article here     %%
%%                                          %%
%%%%%%%%%%%%%%%%%%%%%%%%%%%%%%%%%%%%%%%%%%%%%%
\title{Asymptotic Properties of Penalized Spline Estimators in Concave
  Extended Linear Models: Rates of Convergence}
%\title{A sample article title with some additional note\thanksref{T1}}
\runtitle{Rates of Convergence of Penalized Splines}
%\thankstext{T1}{A sample of additional note to the title.}

\begin{aug}
%%%%%%%%%%%%%%%%%%%%%%%%%%%%%%%%%%%%%%%%%%%%%%
%%Only one address is permitted per author. %%
%%Only division, organization and e-mail is %%
%%included in the address.                  %%
%%Additional information can be included in %%
%%the Acknowledgments section if necessary. %%
%%%%%%%%%%%%%%%%%%%%%%%%%%%%%%%%%%%%%%%%%%%%%%
\author[A]{\fnms{Jianhua Z.} \snm{Huang} \thanksref{T1}
   \ead[label=e1]{jianhua@stat.tamu.edu}},
 \author[B]{\fnms{Ya} \snm{Su} \ead[label=e2]{suyaf@vcu.edu}}
\thankstext{T1}{Corresponding Author}
%%%%%%%%%%%%%%%%%%%%%%%%%%%%%%%%%%%%%%%%%%%%%%
%% Addresses                                %%
%%%%%%%%%%%%%%%%%%%%%%%%%%%%%%%%%%%%%%%%%%%%%%
\address[A]{Department of Statistics, Texas A\&M University, College Station, TX 77843-3143,
\printead{e1}}
\address[B]{Department of Statistical Sciences and Operations
  Research, Virginia Commonwealth University, Richmond, VA 23284-3083, 
\printead{e2}}
\end{aug}

\begin{abstract}
This paper develops a general theory on rates of convergence of
penalized spline estimators for function estimation when the
likelihood functional is concave in candidate functions, where the likelihood is
interpreted in a broad sense that includes conditional likelihood,
quasi-likelihood, and pseudo-likelihood. The theory allows all
feasible combinations of the spline degree, the penalty order, and 
the smoothness of the unknown functions. According to this theory,
the asymptotic behaviors of the penalized spline estimators
depends on interplay between the spline knot number and 
the penalty parameter. The general theory is applied to obtain results
in a variety of contexts, including regression,
generalized regression such as logistic regression and Poisson regression, density
estimation, conditional hazard function estimation for censored data, quantile regression, diffusion
function estimation for a diffusion type process, and
estimation of spectral density function of a stationary time
series. For multi-dimensional function estimation, the theory
(presented in the Supplementary Material) covers
both penalized tensor product splines and penalized bivariate splines
on triangulations.
\end{abstract}

\begin{keyword}[class=MSC2020]
\kwd[Primary ]{62G20}
\kwd[; secondary ]{62G05, 62G07, 62G08}
\end{keyword}

\begin{keyword}
\kwd{basis expansion, multivariate splines, nonparametric regression, polynomial splines, smoothing splines}
\end{keyword}

\end{frontmatter}
%%%%%%%%%%%%%%%%%%%%%%%%%%%%%%%%%%%%%%%%%%%%%%
%% Please use \tableofcontents for articles %%
%% with 50 pages and more                   %%
%%%%%%%%%%%%%%%%%%%%%%%%%%%%%%%%%%%%%%%%%%%%%%
%\tableofcontents

%%%%%%%%%%%%%%%%%%%%%%%%%%%%%%%%%%%%%%%%%%%%%%
%%%% Main text entry area:

\section{Introduction}
Since the publication of the \emph{Statistical Science} discussion
paper of \citet{eilers1996flexible}, penalized spline estimators (or
penalized splines for short) have
gained much popularity and have become a standard general-purpose
method for function estimation. Many applications of penalized splines are presented in the monograph
\citet{ruppert2003semiparametric}.
As an indication of popularity of penalized splines, a google search on ``penalized splines''
yields more than 200,000 results, and the  \citet{eilers1996flexible}
paper has more than 3000 citations.
Despite the popularity of penalized splines, theoretical
understanding of the method falls much behind.
Existing results on asymptotic behaviors of penalized splines
have focused on the nonparametric regression setting. Since 
application of penalized splines has gone far beyond nonparametric regression,
there is a big gap between theory and practice that needs to be filled
in.

\citet{hall2005theory} obtained the asymptotic mean squared error
of penalized spline estimators under a white noise model.
\cite{li2008asymptotics}, \citet{wang2011asymptotics} and \cite{schwarz2016} showed that penalized spline estimators are approximately equivalent to kernel
regression estimators and used this connection to obtain asymptotic
properties of penalized spline estimators.
\cite{claeskens2009asymptotic} and \cite{xiao2019} obtained asymptotic
results for penalized splines under weaker conditions than previously
used in the literature, and identified a breakpoint in rates of convergence to classify two asymptotic situations for penalized
splines, one close to smoothing splines, and one close to polynomial
splines.
Results on estimation of bivariate functions have been obtained by \citet{lai2013bivariate}
for penalized bivariate splines on triangulations, and by
\citet{xiao2013} and \citet{xiao2019jnps} for penalized tensor
product splines with different choices of penalty functionals.
\cite{holland2017penalized} studied asymptotic behaviors of penalized
tensor product splines for estimating multi-dimensional functions.
While the above papers focused on least squares regression, \cite{kauermann2009some} obtained asymptotic behaviors of
penalized spline estimators for generalized regression when the
regressor is univariate.

Most of the works mentioned above have used closed-form expressions of
penalized spline estimators which are only available in the regression
setting. When such expressions are not available in other estimation contexts,
such as estimation of density functions or conditional quantile
functions, existing asymptotic approaches cannot be easily extended,
imposing a challenge on studying the asymptotic behaviors of penalized
splines beyond nonparametric regression.

The goal of this article is to develop a new asymptotic approach to
penalized spline estimators that allows us to obtain general rates of convergence results in a broad range of
contexts, called concave extended linear models
\citep{huang2001concave}. We use the term ``concave extended linear models'' because in all these
contexts, the unknown function is searched over a linear function
space using a maximum-likelihood-type criterion, while the
``likelihood'' is a concave functional of candidate functions.
As we shall see later, the family of concave extended linear models is rich, covers many
useful contexts of function estimation as special cases, including regression,
generalized regression such as logistic regression and Poisson regression, density
estimation, conditional hazard function estimation for censored data, diffusion
function estimation for a diffusion process, quantile regression, and
estimation of spectral density function of a stationary time series.
For readability of the paper, we present only results for univariate
function estimation in the main paper.
Results for multi-dimensional function estimation are obtained in the
same theoretical framework, but will be presented in the 
Supplementary Material since they involve more
complicated notations and background on multivariate splines.

\subsection{Concave extended linear models} \label{ELM_PSE}

Suppose we are interested in estimating an unknown function $\eta_0$
that is associated with the distribution of a random variable or vector
$\mathbf{W}$. This function is defined on a compact set $\mathcal{U}$, which for
concreteness is assumed to be an interval $[a,b]$. We have available an i.i.d.\
sample of $\mathbf{W}$ of size $n$, denoted as $\mathbf{W}_1, \ldots, \mathbf{W}_n$.
For a candidate function $h$ of estimating $\eta_0$, the (scaled) log-likelihood is
\begin{equation}\label{eq:log-lik}
\ell(h;\mathbf{W}_1, \ldots, \mathbf{W}_n) =\frac{1}{n} \sum_{i=1}^nl(h;\mathbf{W}_i),
\end{equation}
where $l(h;\mathbf{W}_i)$ is the contribution to the log-likelihood
from $\mathbf{W}_i$, and the scaling is given by the factor $1/n$. 
The expected log-likelihood is 
\[
\Lambda(h) = E \{\ell(h;\mathbf{W}_1, \ldots, \mathbf{W}_n)\}\; (= E\{
l(h; \mathbf{W}_i)\} \text{ if $\mathbf{W}_i$ are i.i.d.}),
\]
where the expectation is taken with respect to the distribution
of $\mathbf{W}_1, \ldots, \mathbf{W}_n$. For the rest of the paper, when
there is no confusion, we will omit $\mathbf{W}_1, \ldots,
\mathbf{W}_n$ in the log-likelihood expresion and write $\ell(\eta)$ to simplify notation.

Assume that the set of functions for which both the log-likelihood 
and the expected log-likelihood are well-defined is a convex set.
We say that we have a \emph{concave extended linear model} if
\begin{itemize}
\item[(i)] $\ell(h; \mathbf{W}_1, \ldots, \mathbf{W}_n)$ is a concave
in $h$ for all possible values of $\mathbf{W}_1, \ldots,
\mathbf{W}_n$, i.e., for $0\leq
 \alpha \leq 1$, 
\begin{align*}
& \ell(\alpha h_1 +(1-\alpha)h_2; \mathbf{W}_1, \ldots, \mathbf{W}_n) \\
& \qquad \geq \alpha \ell(h_1; \mathbf{W}_1, \ldots, \mathbf{W}_n)
+ (1-\alpha) \ell(h_2; \mathbf{W}_1, \ldots, \mathbf{W}_n);
\end{align*}
\item[(ii)] $\Lambda(h)$ is strictly concave in $h$, i.e., for $0\leq
 \alpha \leq 1$, 
\[
\Lambda(\alpha h_1 +(1-\alpha)h_2) \geq \alpha \Lambda(h_1)
+ (1-\alpha) \Lambda(h_2),
\]
and if $0<\alpha <1$, the strict inequality holds only when it does
not hold that $h_1=h_2, a.e.$.
\end{itemize}

In our framework, the functional $\ell(h)$ can be something more general than
the log-likelihood function. All we need is that the function of
interest, $\eta_0$, maximizes $\Lambda(h)$.
For example, for the regression problem, our goal is to estimate the conditional 
mean $\eta_0(x) = E (Y| X=x)$, by setting $\mathbf{W}_i = (X_i,Y_i)$ 
and 
\[
\ell(h; \mathbf{W}_1, \ldots, \mathbf{W}_n) = - \frac{1}{n} \sum_{i=1}^n \{Y_i - h(X_i)\}^2,
\]
we obtain a concave extended linear model. If the conditional
distribution of $Y_i$ given $X_i$ is Gaussian, $\ell(h; \mathbf{W}_1, \ldots, \mathbf{W}_n)$ can be
interpreted (up to a scale factor) as the log-conditional likelihood, but this distribution
assumption is not needed when applying our results in this paper.
For the problem of estimating a probability density function $\eta_0$,
by setting $\mathbf{W}_i = X_i$ and 
\[
\ell(h; \mathbf{W}_1, \ldots, \mathbf{W}_n) = \frac{1}{n} \sum_{i=1}^n h(X_i) -  \log \int_{\mathcal{U}} \exp h(x)\, dx,
\] 
we also obtain a concave extended linear model. More detailed
discussions of log-likelihood function for a variety of
contexts can be found in
Sections~\ref{sec:regression}--\ref{sec:quantile} and Sections S.2--S.3
in the Supplementary Material.

\subsection{Penalized spline estimators}
For sample size $n$, consider a finite-dimensional space $\bbG_n$ of spline functions with degree $m$. The penalized spline
estimator $\hat{\eta}_n$ is defined as the maximizer among $g\in \bbG_n$ of the following penalized likelihood
\begin{equation}~\label{eq:plik}
p\ell(g; \mathbf{W}_1, \ldots, \mathbf{W}_n) 
= \ell(g; \mathbf{W}_1, \ldots, \mathbf{W}_n) - \lambda_n J_q(g),
\end{equation}
where $\ell(g; \mathbf{W}_1, \ldots, \mathbf{W}_n)$ is the
log-likelihood defined in \eqref{eq:log-lik},
$J_q(g)$ is a penalty term, and $\lambda_n$ is a penalty parameter.
The penalty term $J_q(g) = J_q(g, g)$ is chosen to be a quadratic functional
that quantifies the roughness of a candidate function $g$, and we
use the following specific form in this paper
\begin{equation}\label{Osullivan}
  J_q(g) = \int_{\mathcal{U}} \{g^{(q)}(x)\}^2 \,dx.
\end{equation}
We let $q$ be an integer and refer to it as the order of the penalty.
This kind of estimator was first introduced in \cite{o1986statistical}, \citet{o1988fast},
and later popularized by \cite{eilers1996flexible} where a modified
penalty functional is used. A multi-dimensional analog of the penalty
functional \eqref{Osullivan} is given in the Supplementary Material.

If we do not restrict the maximization to a finite-dimensional space
in the optimization problem~\eqref{eq:plik}, we perform the optimization
over the set of all functions such that the penalty functional is
finite, which is the usual Sobolev space of order $q$
\[
W^q[a,b]= \{h: h^{(q-1)} \text{ is absolutely continuous and } 
J_q(h) <\infty \},
\] 
where $h^{(l)}$ denotes the $l$-th derivative of $h$, then the
resulting estimator is a smoothing spline \citep{wahba1990spline, gu2013smoothing}. 
If there is no penalty term in \eqref{eq:plik} (or $\lambda_n=0$), we
call the resulting estimator a polynomial spline estimator.
In the literature, a polynomial spline function estimator is usually called
a regression spline estimator mainly because the regression problem is
where such an estimator was first applied to, but we prefer the former name because its
application goes far beyond the regression problem. There is an extensive
literature on the asymptotic theory of the smoothing spline estimators and the
polynomial spline estimators, which is reviewed in the
Supplementary Material.

\subsection{Overview of results in this paper}
For the penalized spline estimator $\hat\eta_n$, we obtain a
probabilistic bound on the quantity $\|\hat\eta_n - \eta_0\|^2 +
\lambda_n J(\hat\eta_n)$, where $\|\cdot \|$ is a norm that is equivalent
to the usual $L_2$-norm. Our result not only gives the $L_2$ rate
of convergence of $\hat\eta_n$ to the true function $\eta_0$, but also gives a bound on
$J(\hat\eta_n)$, which measures the roughness of the estimator.

In the framework of concave extended linear models, we establish asymptotic
results for penalized spline estimators under a set of high level
conditions. These high level conditions help us identify the essential
factors governing the asymptotic behaviors, namely, the property of the likelihood, the approximation
property of the spline space, and the eigenstructure of the penalty
functional. Using high level conditions allows us to obtain
results in a unified manner for a wide range of problems,
including the following:
\begin{itemize}
   \item regression (Section~\ref{sec:regression}),
   \item generalized regression (Section~\ref{sec:gr}),
   \item estimation of probability density function (Section~\ref{prob_density}),
   \item hazard regression for censored data (Section~\ref{sec:countingp}),
   \item quantile regression (Section~\ref{sec:quantile}), 
   \item estimation of drift coefficient of diffusion type process
     (Section~S.2),
   \item spectral density estimation for a stationary time series
       (Section~S.3).  
\end{itemize}
To our knowledge, our treatment of rates of convergence for the penalized
spline estimators is the most comprehensive in its
ability to handle a variety of estimation contexts under weak
assumptions.
Using high level conditions allows us to obtain results without
making the strong assumption of equally-spaced knots as used by some existing works.
Our results for the later five contexts are entirely new to the
literature.

Our theory shows that the asymptotic behaviors of penalized splines are
governed by the spline degree $m$, penalty order $q$, degree of
smoothness of the unknown function $p$ (usually denoting the number of
derivatives), and the interplay between the number of knots and the penalty parameter. 
Our results cover all feasible combinations of $m$, $p$, and $q$,
while all existing works only cover selected combinations and are
obtained only in the regression or generalized regression setting.
Following our main results (Sections~\ref{sec:main}), the rates of
convergence of penalized splines can be
classified into seven scenarios and in six of these scenarios the
optimal rate of convergence can be achieved when the spline knot number and the penalty parameter are appropriately chosen
(Table~\ref{summary_table} and its discussion, Section~\ref{sec:summary}).

Our technical approach uses functional analysis tools and
avoids the detailed calculations that involve explicit
expressions of penalized spline estimators as typically used in previous works.
This functional analysis approach is particularly powerful in dealing with
new challenges encountered
when obtaining asymptotic behaviors of penalized splines beyond the regression setting.
For example, one needs to handle the integration-to-one constraints for density estimation,
the non-negative constraint for hazard function estimation, and
non-differentiability of the ``log-likelihood'' for quantile regression. Since penalized spline estimators do not
have a closed-form expression in general settings, the asymptotic approaches
previously used for the regression setting do not apply.
The functional analysis approach also allows us to treat penalized
univariate splines, penalized (multivariate) tensor product splines, and penalized
bivariate splines on triangulations in a unified framework.

Our technical approach has its roots in previous works for obtaining
asymptotic behaviors of
(un-penalized) polynomial
spline estimators, as originated by Charles J.\ Stone in a series of works, synthesized in   
\citet{stone1994use} and \citet{hansen1994extended}, and matured in
\citet{huang2001concave}. As such, we are able to obtain existing results
for polynomial spline estimators as a special simplification of our
approach. On the other hand, considering a penalized likelihood in extended
linear models with a roughness penalty is a substantial
advancement over existing works. We
obtain a rich collection of new results that reveal interesting asymptotic
behaviors of penalized spline estimators that were not
anticipated by \citet{huang2001concave}. We also extended previous theory to deal
with some contexts that were not covered by the framework of \citet{huang2001concave},
such as quantile regression and spectral density estimation.

The rest of the paper is organized as follows. (Sections labeled with S. are in the Supplementary Material.)
Section~\ref{sec:prelim} collects some known facts on the properties
of univariate spline functions and the penalty functional to make this paper self-contained.
Sections~\ref{sec:main} and \ref{sec:proof} present respectively two master theorems and their proofs.
Section~\ref{sec:strategy} gives several lemmas for assisting verification of the
conditions used in the master
theorems. Sections~\ref{sec:regression}--\ref{sec:quantile} and S.2--S.3
verify those conditions under primitive conditions in a variety of contexts. Section~S.1 provides a
literature review on the asymptotic theory of smoothing spline estimators
and polynomial spline estimators. Sections~S.4 and S.5
respectively present our theory for penalized tensor-product
splines and for penalized bivariate splines on triangulations.

\subsection{Notation}
For two real numbers $a$ and $b$, let $a\wedge b$ and $a \vee b$
denote respectively the smaller and larger one of the two. 
Given two sequences of positive numbers $a_n$ and
$b_n$, we write $a_n\lesssim b_n$ and $b_n \gtrsim a_n$ if the ratio $a_n/b_n$ is bounded for all $n$
and $a_n \asymp b_n$ if and only if $a_n\lesssim b_n$ and $b_n
\lesssim a_n$,
we write $a_n \prec b_n$ and $b_n \succ a_n$ if $a_n/b_n \to 0$ as $n\to\infty$.
Let $\|g\|_{2}$ denote the $L_2$-norm (relative to the Lebesgue
measure) and $\|g\|_\infty$ the $L_\infty$-norm of the function $g$. 
Throughout the paper, we use $C$, $M$, and possibly with subscripts to denote constants
whose values may vary from contexts to contexts.

\section{Preliminaries: splines and penalty functionals}\label{sec:prelim}
This section provides the necessary background about spline functions
and penalty functionals, introduces notations, and presents some general
assumptions. In particular, it summarizes some key results from the literature
about spline functions and the penalty functionals, which are essential
for our study of asymptotic properties of the penalized spline estimators.

\subsection{Splines}\label{sec:splines}

A spline function is a numerical function that is piecewise-defined by
polynomial functions, and the polynomial pieces are connected smoothly.
More precisely, a function $f$ defined on a compact interval
$[a,b]$ is called a spline of degree $m$ with $k$ interior
knots $t_j, j=1, \dots, k$ (satisfying $a=t_0<t_1<\dots, t_k
<t_{k+1}=b$), if $f$ is a polynomial of degree $m\geq 0$ on $[t_j,t_{j+1}],
j=0, \dots, k$, and $f$ globally has $m-1$ continuous derivatives (no
derivative if $m=0$).
Note that, for a given sequence of knots, the collection of all
degree-$m$ splines on $[a,b]$ forms a linear vector space
with dimension $N= m+k+1$, denoted as $\mathbb{G}$.

When we study the asymptotic properties of penalized spline
estimators, we allow the number of knots to increase with
the sample size. We write $N=N_n$ and $\mathbb{G} =\mathbb{G}_n$ to make this dependence explicit.
We assume that the knot sequence has the bounded mesh ratio.
More precisely, we assume that the ratio of the maximum and 
minimum distance between two neighboring knots is bounded from 
above and below by two positive numbers that do not depend on $n$, that is,
\begin{equation*}
  C_1 \leq \frac{\max_j(t_{j+1} - t_j)}{\min_j(t_{j+1} - t_j)} \leq C_2, \quad \text{
    for some } C_1, C_2 > 0.
\end{equation*}
Let $\delta_n$ be the largest distance between all the neighboring knots, that
is,
\begin{equation}
  \delta_n = \max_{j} |t_{j+1} - t_j|.  
\end{equation}
Under the assumption of bounded mesh ratio, we have $\delta_n \asymp {1}/{N_n}$.

The rationale for using splines in function estimation is that
splines have a good approximation property, namely, they can approximate
smooth functions very well when the knot number increases to infinity,
as shown in the next result \citep[Theorem 6.25 and Corollary 6.26 of][]{schumaker1981spline}.
(Please note difference in notation. We state the result in terms of
spline degree, while the result in the cited book is stated using the
order of splines.)
 
\begin{prop}\label{schumaker}
Assume $\eta_0 \in W^p[a,b] $ and $m \geq {p}-1$.  
There exist a function $\eta^*_n \in \mathbb{G}_n$ and
constants $C_1$-$C_3$,  depending on $p$ and $\eta_0$ such that
\begin{equation*}
 \|\eta^*_n - \eta_0\|_2 \leq C_1 \delta_n^{p}, \qquad  
\|\eta^*_n - \eta_0 \|_\infty \leq C_2 \delta_n^{p-1/2},
\end{equation*}
and moreover, if $q\leq m$, then  $J_{q}(\eta^*_n) \leq C_3 \delta_n^{2(p - q) \wedge 0}.$
\end{prop}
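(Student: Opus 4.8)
The plan is to construct $\eta_n^*$ from a single quasi-interpolation operator and to read off all three bounds from one family of simultaneous derivative estimates, supplementing the penalty bound with an inverse (Markov–Bernstein) inequality. First I would invoke the cited approximation theorem (Schumaker, Thm.~6.25 and Cor.~6.26) to produce a spline $\eta_n^* \in \mathbb{G}_n$ that reproduces polynomials of degree $\le m$ on each knot interval and satisfies, for every integer $0 \le r \le \min(p,m)$, the localized estimate
\[
\|D^r(\eta_0 - \eta_n^*)\|_{L_2(I_j)} \le C\,\delta_n^{\,p-r}\,\|\eta_0^{(p)}\|_{L_2(I_j^*)},
\]
where $I_j = [t_j,t_{j+1}]$ and $I_j^*$ is a fixed finite union of neighboring intervals reflecting the locality of the operator. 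The bounded mesh ratio gives $|I_j| \asymp \delta_n$ uniformly in $j$, and since each point lies in boundedly many $I_j^*$, summing the squares over $j$ yields the global bound $\|D^r(\eta_0 - \eta_n^*)\|_2 \le C\delta_n^{\,p-r}\|\eta_0^{(p)}\|_2$.

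Taking $r=0$ gives the first claim $\|\eta_n^* - \eta_0\|_2 \le C_1\delta_n^p$ at once. For the $L_\infty$ bound I would argue interval by interval: rescaling $I_j$ to a reference interval of unit length converts the local estimate into $\|\eta_0 - \eta_n^*\|_{L_\infty(I_j)} \le C\delta_n^{\,p-1/2}\|\eta_0^{(p)}\|_{L_2(I_j^*)}$, the half power being precisely the price of the embedding $W^p[a,b]\hookrightarrow L_\infty$ under $\delta_n$-scaling (equivalently, a Cauchy–Schwarz step on an interval of length $\asymp\delta_n$). Maximizing over $j$ and using $\max_j \|\eta_0^{(p)}\|_{L_2(I_j^*)} \le \|\eta_0^{(p)}\|_2$ gives $\|\eta_n^* - \eta_0\|_\infty \le C_2\delta_n^{\,p-1/2}$.

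For the penalty bound I would split on the sign of $p-q$. When $q \le p$, the derivative $\eta_0^{(q)}$ lies in $L_2$ (since $W^p[a,b]\subset W^q[a,b]$ on a bounded interval), so the triangle inequality together with the $r=q$ case of the simultaneous estimate gives $\|\eta_n^{*(q)}\|_2 \le \|\eta_0^{(q)}\|_2 + C\delta_n^{\,p-q}\|\eta_0^{(p)}\|_2$, which is bounded; hence $J_q(\eta_n^*) \le C_3 = C_3\,\delta_n^{\,2(p-q)\wedge 0}$. When $p < q \le m$, note first that $q>p$ forces $p<m$, so on each $I_j$ the spline $\eta_n^*$ is a genuine polynomial of degree $m \ge q$ whose $p$-th derivative is controlled by the $r=p$ estimate. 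I would then apply the scaled inverse inequality $\|P^{(q)}\|_{L_2(I_j)} \le C\delta_n^{-(q-p)}\|P^{(p)}\|_{L_2(I_j)}$ for polynomials $P$ of degree $\le m$, sum the squares over $j$, and use $\|\eta_n^{*(p)}\|_2 \le \|\eta_0^{(p)}\|_2 + C\|\eta_0^{(p)}\|_2$ to conclude $J_q(\eta_n^*) \le C_3\,\delta_n^{-2(q-p)} = C_3\,\delta_n^{\,2(p-q)\wedge 0}$.

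The routine parts are the summation over intervals and the reference-to-physical scaling. The hard part will be the two places where one cannot simply "approximate the function": the half-power loss in the $L_\infty$ bound, which must be tracked carefully through the scaling to avoid an off-by-$1/2$ error, and the regime $q>p$ in the penalty bound, where no $q$-th derivative of $\eta_0$ is available to compare against and one must instead trade the missing smoothness for negative powers of $\delta_n$ through the inverse inequality — a step that relies crucially on the hypothesis $q \le m$ so that the inverse inequality applies to each spline piece.
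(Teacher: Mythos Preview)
Your proposal is correct and matches the standard argument. The paper does not actually prove this proposition; it records it as a known fact with a citation to Theorem~6.25 and Corollary~6.26 of Schumaker (1981), and in the Supplementary Material (Section~S.4.1) it sketches precisely the quasi-interpolation route you describe---dual functionals, the operator $Q$ that reproduces degree-$m$ polynomials, local Whitney-type bounds, then summation over knot intervals. Your treatment of the $L_\infty$ bound via local scaling and of the penalty bound for $q>p$ via the piecewise Markov--Bernstein inverse inequality is exactly what the paper invokes when it extends the result to bivariate splines in Section~S.5 (``Markov inequality (1.5)'' in the cited monograph), so there is no methodological daylight between your sketch and the paper's.
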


If $m< p-1$, since  $\eta_0 \in W^p[a,b] $ implies that $\eta_0 \in
W^{m+1}[a,b]$, the conclusion of this theorem holds by replacing $p$ with
$m+1$. This approximation rate is the best one can expect: the
approximation error rate cannot be better than $\delta_n^{m+1}$ even
when the function $\eta_0$ has smoothness $p > m+1$, as shown in
Theorem~6.42 of \citet{schumaker1981spline}. Because of the saturation
phenomenon of the spline approximation, we define $p' = p \wedge
(m+1)$ and use $p'$ to measure the rate of approximation
error. Moreover, we will require later that $p>1/2$ in order to guarantee $\|\eta^*_n - \eta_0\|_\infty =o(1)$.

Following \cite{huang1998projection, huang1998functional}, we 
introduce a measure of the complexity of a spline space,
\begin{equation}
  \label{A_n}
  A_n = \sup_{g \in \bbG_n, \|g\|_{2} \neq 0}\biggl\{\frac{\|g\|_\infty }{\|g\|_{2}}\biggr\}.
\end{equation}
This measure will play an important role in the asymptotic analysis.
The next result, which follows from Theorem 5.1.2 of
\cite{devore1993constructive}, gives the rate of increase of $A_n$.

\begin{prop}\label{bspline_constant}
Under the bounded mesh ratio condition, $A_n \asymp \delta_n^{-1/2}$.
\end{prop}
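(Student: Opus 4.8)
The plan is to pass from the supremum over functions $g \in \bbG_n$ to a supremum over their B-spline coefficient vectors, where the two competing norms decouple cleanly. First I would fix the degree-$m$ B-spline basis $\{B_j\}_{j=1}^{N_n}$ of $\bbG_n$, normalized so that $0 \le B_j \le 1$ and $\sum_j B_j \equiv 1$, and write a generic element as $g = \sum_j c_j B_j$ with coefficient vector $\mathbf{c} = (c_j)$. The support of $B_j$ is the interval spanning $m+1$ consecutive knot gaps, of width $t_{j+m+1}-t_j$, and under the bounded mesh ratio assumption every such width satisfies $t_{j+m+1}-t_j \asymp \delta_n$ since $m$ is fixed.

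The core of the argument is the norm-equivalence furnished by Theorem 5.1.2 of \cite{devore1993constructive}: there are constants depending only on $m$ and the mesh ratio such that
\[
\|g\|_\infty \asymp \|\mathbf{c}\|_\infty
\qquad \text{and} \qquad
\|g\|_2 \asymp \Bigl(\textstyle\sum_j c_j^2\, (t_{j+m+1}-t_j)\Bigr)^{1/2}.
\]
Invoking $t_{j+m+1}-t_j \asymp \delta_n$ collapses the weighted $\ell_2$ norm to $\|g\|_2 \asymp \delta_n^{1/2}\,\|\mathbf{c}\|_2$, and combining the two displays yields
\[
\frac{\|g\|_\infty}{\|g\|_2} \asymp \delta_n^{-1/2}\,\frac{\|\mathbf{c}\|_\infty}{\|\mathbf{c}\|_2}.
\]
From here both bounds are immediate. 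For the upper bound I would use the elementary vector inequality $\|\mathbf{c}\|_\infty \le \|\mathbf{c}\|_2$ to conclude $\|g\|_\infty/\|g\|_2 \lesssim \delta_n^{-1/2}$ uniformly in $g$, hence $A_n \lesssim \delta_n^{-1/2}$. For the lower bound I would test with a single basis function $g = B_j$, so that $\mathbf{c}$ is a coordinate vector with $\|\mathbf{c}\|_\infty = \|\mathbf{c}\|_2$; then $\|B_j\|_\infty \asymp 1$ while $\|B_j\|_2 \asymp \delta_n^{1/2}$, giving a ratio $\gtrsim \delta_n^{-1/2}$ and therefore $A_n \gtrsim \delta_n^{-1/2}$. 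Together these establish $A_n \asymp \delta_n^{-1/2}$.

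The main obstacle is securing the norm equivalence with constants that are uniform in $n$. The inequality $\|g\|_\infty \le \|\mathbf{c}\|_\infty$ is elementary from $\sum_j B_j \equiv 1$, but the reverse stability bound $\|\mathbf{c}\|_\infty \lesssim \|g\|_\infty$ (de Boor's stability of the B-spline basis) and the $L_2$ equivalence both carry constants that a priori depend on the knot configuration. It is precisely the bounded mesh ratio hypothesis that keeps these constants bounded as the knots are refined, and this is exactly where that assumption enters. Once the equivalence is in hand with $n$-independent constants, the reduction to the trivial $\ell_\infty$-versus-$\ell_2$ comparison of coefficient vectors makes the remainder routine.
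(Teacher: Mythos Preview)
Your proposal is correct and mirrors the paper's approach: the paper does not give a detailed proof but simply states that the result follows from Theorem~5.1.2 of \cite{devore1993constructive}, which is exactly the norm-equivalence you invoke. Your write-up just spells out how that theorem, combined with the bounded mesh ratio, yields both the upper and lower bounds.
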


The asymptotic analysis of spline estimators relies on an important
property of spline spaces, namely, the uniformly closeness of a
data-driven norm to its expectation over the entire spline space
for a fixed degree and fixed knot sequence (they vary with $n$). 
Let $X, X_1, \dots, X_n$ be i.i.d. random variables.
Define the empirical and theoretical inner products as
\begin{align*}
\langle g_1, g_2\rangle_n & = E_n[g_1(X) g_2(X)\, w(X)] = \frac{1}{n}\sum_{i=1}^n g_1(X_i)
g_2(X_i)w(X_i), \\
\langle g_1, g_2\rangle & = E[g_1(X) g_2(X)\, w(X)],
\end{align*}
where $w(x)$ is a weight function bounded away from zero and infinity, that
is, there exists $C_1, C_2 > 0$ such that
\[
    C_1 \leq w(x) \leq C_2, \quad \text{for any} \quad a \leq x \leq b. 
\]
The corresponding squared empirical and theoretical norms are
$\|g\|^2_n = \langle g, g\rangle_n$ and $\|g\|^2 = \langle g,g\rangle$.
We assume that $X$ has a density function which is bounded away from
0 and infinity, and consequently the theoretical norm $\|\cdot\|$ is equivalent 
to $\|\cdot\|_2$, the usual $L_2$-norm relative to the Lebesgue
measure, i.e., there are constants $C_3$ and $C_4$ such that
$ C_3 \|g\|_2 \leq \|g\| \leq C_4 \|g\|_2$ for all square-integrable
function $g$.

\begin{prop}\label{equiv_norm}
  Under the bounded mesh ratio condition, if \\
$\lim_n N_n \log(n) /n = 0$, then the
  empirical and theoretical norms are asymptotically equivalent, that
  is,
 \[
\sup_{g \in \bbG_n, \|g\|\not = 0} \bigg| \frac{\|g\|_n}{\|g\|} - 1\biggr| = o_P(1).
\]
\end{prop}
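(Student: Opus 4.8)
The plan is to reduce the statement, which concerns a supremum over the infinite-dimensional unit sphere of $\bbG_n$, to a spectral-norm estimate for the difference of two Gram matrices, and then to control that difference entrywise via a Bernstein bound and a union bound. First I would note that the ratio $\|g\|_n/\|g\|$ is invariant under scaling of $g$, and that
\[
\left|\frac{\|g\|_n}{\|g\|}-1\right| = \frac{\bigl|\,\|g\|_n^2-\|g\|^2\,\bigr|}{\|g\|\,(\|g\|_n+\|g\|)} \le \frac{\bigl|\,\|g\|_n^2-\|g\|^2\,\bigr|}{\|g\|^2},
\]
so it suffices to show that $T_n := \sup_{g\in\bbG_n,\,\|g\|\neq0}\bigl|\,\|g\|_n^2-\|g\|^2\,\bigr|/\|g\|^2 = o_P(1)$. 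Expanding $g=\sum_j b_j B_j$ in a B-spline basis $\{B_j\}$ of $\bbG_n$ and writing $\mathbf{b}=(b_j)$, both $\|g\|_n^2=\mathbf{b}^{\top}\mathbf{G}_n\mathbf{b}$ and $\|g\|^2=\mathbf{b}^{\top}\mathbf{G}\mathbf{b}$ are quadratic forms in $\mathbf{b}$, where $\mathbf{G}_n$ and $\mathbf{G}$ are the empirical and theoretical Gram matrices with entries $\langle B_j,B_k\rangle_n$ and $\langle B_j,B_k\rangle$. Hence $T_n=\sup_{\mathbf{b}\neq0}|\mathbf{b}^{\top}(\mathbf{G}_n-\mathbf{G})\mathbf{b}|/(\mathbf{b}^{\top}\mathbf{G}\mathbf{b})$ is exactly the spectral norm of $\mathbf{G}^{-1/2}(\mathbf{G}_n-\mathbf{G})\mathbf{G}^{-1/2}$.

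Next I would invoke the standard stability of the B-spline basis \citep{devore1993constructive,schumaker1981spline}: under the bounded mesh ratio condition the eigenvalues of $\mathbf{G}$ are all comparable to $\delta_n$, so that $\|\mathbf{G}^{-1/2}\|_{\mathrm{op}}^2\asymp\delta_n^{-1}$. This yields $T_n\le\|\mathbf{G}^{-1/2}\|_{\mathrm{op}}^2\,\|\mathbf{G}_n-\mathbf{G}\|_{\mathrm{op}}\lesssim\delta_n^{-1}\|\mathbf{G}_n-\mathbf{G}\|_{\mathrm{op}}$, reducing the problem to showing $\|\mathbf{G}_n-\mathbf{G}\|_{\mathrm{op}}=o_P(\delta_n)$. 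Because each $B_j$ has support on only $m+1$ adjacent subintervals, both Gram matrices are banded with a fixed bandwidth, so $\mathbf{G}_n-\mathbf{G}$ has at most $O(N_n)$ nonzero entries and, by the Gershgorin/row-sum bound for banded symmetric matrices, $\|\mathbf{G}_n-\mathbf{G}\|_{\mathrm{op}}\lesssim\max_{j,k}|(\mathbf{G}_n-\mathbf{G})_{jk}|$. It therefore suffices to prove the entrywise bound $\max_{j,k}|(\mathbf{G}_n-\mathbf{G})_{jk}|=o_P(\delta_n)$.

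Finally, each entry $(\mathbf{G}_n-\mathbf{G})_{jk}=n^{-1}\sum_i\{B_j(X_i)B_k(X_i)w(X_i)-E[B_jB_kw]\}$ is an average of i.i.d.\ centered summands that, with the partition-of-unity normalization $0\le B_j\le1$, are bounded by $\|w\|_\infty$ and have variance of order $\delta_n$ (since $E[B_j^2B_k^2w^2]\lesssim E[B_j^2]\asymp\delta_n$). Bernstein's inequality then gives $P(|(\mathbf{G}_n-\mathbf{G})_{jk}|>\varepsilon\delta_n)\le2\exp(-c\,n\varepsilon^2\delta_n)=2\exp(-c'\,n\varepsilon^2/N_n)$, and a union bound over the $O(N_n)$ nonzero entries produces the factor $N_n\exp(-c'n\varepsilon^2/N_n)$, whose logarithm is $\log N_n-c'n\varepsilon^2/N_n\to-\infty$ precisely when $N_n\log N_n/n\to0$; this is implied by the hypothesis $N_n\log n/n\to0$. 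Combining the three steps gives $T_n=o_P(1)$ and hence the claim. I expect the \textbf{main obstacle} to be the passage from the pointwise (fixed-$g$) concentration to uniformity over the entire sphere: this is what forces the reduction to the finite Gram matrix, and the whole argument hinges on exploiting the locality of the B-spline basis (the banded structure together with $\mathbf{G}\asymp\delta_n\mathbf{I}$) so that the union bound runs over only $O(N_n)$ entries rather than an uncontrolled covering of the unit sphere.
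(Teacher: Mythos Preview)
Your proposal is correct and is essentially the argument of \cite{huang2003asymptotics}, which is precisely what the paper invokes: the paper does not prove Proposition~\ref{equiv_norm} itself but cites \cite{huang1998projection} for the general finite-dimensional result under the stronger condition $A_n^2 N_n/n\to 0$ and \cite{huang2003asymptotics} for the sharpening to $N_n\log n/n\to 0$ in the spline case. Your three-step reduction---pass to the Gram-matrix spectral norm, use B-spline stability to get $\mathbf{G}\asymp\delta_n\mathbf{I}$, then exploit the banded structure so that a Bernstein-plus-union-bound over $O(N_n)$ entries suffices---is exactly the mechanism by which locality of the B-spline basis buys the weaker condition; this is the content of the cited reference, so your write-up matches the paper's intended proof.
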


\cite{huang1998projection} proved Proposition \ref{equiv_norm} for
an arbitrary finite dimensional function space under the stronger
  condition that $\lim_n A_n^2 N_n /n = 0$. 
\cite{huang2003asymptotics}  relaxed the condition to
$\lim_n N_n \log(n) /n = 0$ for splines. Both papers proved the
results for $w(x)=1$ but the same argument applies to a general weight
function that is bounded away from zero and infinity.

\subsection{The penalty functional}
The asymptotic properties of the penalized spline estimator rely
heavily on the eigenanalysis of the quadratic penalty functional
$J_q(h) = \int_\mathcal{U} \{h^{(q)}(x)\}^2 dx$ with respect to the
quadratic functional $V(h) = \|h\|^2 = \int_\mathcal{U} h^2(x) \omega(x) dx$.
Such eigenanalysis also plays a critical role in studying the asymptotic
properties of the smoothing splines; see, i.e., \cite{gu2013smoothing}.

A quadratic functional $B$ is said to be completely continuous with
respect to another quadratic functional $A$, if for any $\epsilon >0$, 
there exists a finite number of linear functionals
$L_1,\cdots,L_k$ such that $L_1(h)=\cdots=L_k(h)=0$ implies that
$B(h) \leq \epsilon A(h)$. See \citet[][Section 3.3]{weinberger1974variational}. 

Applying Theorem 3.1 of \cite{weinberger1974variational}, it can be
shown that, if $V$ is
completely continuous with respect to $J$, then $V$ and $J$ can be
simultaneously diagonalized in the following sense 
\citep[see Section 9.1 of][]{gu2013smoothing}. There exists a sequence of eigenfunctions $\phi_\nu$, $\nu =1, 2, \dots, $ and the associated sequence of eigenvalues
$\rho_\nu\geq 0$ of $J$ with respect to $V$ such that
\[
V(\phi_\nu,\phi_\mu) = \delta_{\nu\mu}, \quad 
J_q(\phi_\nu,\phi_\mu) = \rho_\nu \delta_{\nu\mu},
\]
where $\delta_{\nu\mu}$ is the Kronecker delta, 
\[
V(\phi_\nu,\phi_\mu) = \int_\mathcal{U} \phi_\nu(x) \phi_\mu(x)
\omega(x) \, dx, \quad
J_q(\phi_\nu,\phi_\mu) = \int_\mathcal{U} \phi_\nu^{(q)}(x)
\phi_\mu^{(q)}(x)\, dx.
\]
See also \citet{silverman1982estimation}.
Furthermore, any function $h$ satisfying $J_q(h) < \infty$
has a Fourier series expansion with the eigen basis
$\{\phi_\nu\}$,
\[
h = \sum_\nu h_\nu\phi_\nu, \quad h_\nu = V(h,\phi_\nu),
\]
and 
\[
V(h) =\sum_\nu h_\nu^2, \quad J_q(h) = \sum_\nu \rho_\nu h_\nu^2.
\]
Therefore,
\[
\|h\|^2 + \lambda_n J_q(h) = (V + \lambda_n J)(h) 
= \sum_\nu (1 + \lambda_n \rho_\nu) h_\nu^2.
\]

The next result \citep[see (3.17) of][]{utreras2013optimal} gives the rate of divergence to infinity of the eigenvalues.

\begin{prop}
 \label{rate_eigenvalue}
Assume $V(h) = \|h\|^2 = \int_{\mathcal{U}} h^2(x) \omega(x)\,dx$ for a weight
function $\omega$ that is bounded away from zero and infinity, that is, there exist constants $C_1, C_2 > 0$ such that
  \begin{eqnarray*}
    C_1 \leq \omega(x) \leq C_2, \quad \text{for any} \quad a \leq x \leq b. 
  \end{eqnarray*} 
Then $V$ is completely continuous with respect to $J_q$.
Moreover, we have $0\leq \rho_\nu \uparrow \infty$, and
$\rho_\nu\asymp \nu^{2q}$ for all sufficiently large $\nu$.
\end{prop}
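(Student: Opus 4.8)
The plan is to read Proposition~\ref{rate_eigenvalue} as a statement about a generalized eigenvalue problem and to attack it with the variational (min--max) characterization of eigenvalues, using the two-sided bound $C_1\le\omega\le C_2$ to strip off the weight and reduce the rate computation to the clean Lebesgue case $\omega\equiv 1$. Logically the three assertions are nested: complete continuity must come first (it is the hypothesis under which the simultaneous diagonalization recalled before the statement is valid), then the existence and divergence of the $\rho_\nu$, and finally the sharp rate.

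First I would establish that $V$ is completely continuous with respect to $J_q$ by invoking the compactness of the Sobolev embedding $W^q[a,b]\hookrightarrow L_2[a,b]$ (Rellich--Kondrachov; indeed Arzel\`a--Ascoli suffices since $q\ge 1$). The one subtlety is that $J_q$ is only a seminorm, vanishing on the $q$-dimensional space $\mathcal{P}_{q-1}$ of polynomials of degree $\le q-1$. To handle this I would fix a unisolvent set of linear functionals $L_1,\dots,L_q$ on $\mathcal{P}_{q-1}$ and invoke the Bramble--Hilbert / Deny--Lions inequality $\|h\|_2^2\le C\,J_q(h)$, valid on $\{h:L_1(h)=\cdots=L_q(h)=0\}$; together with interpolation (Gagliardo--Nirenberg) for the intermediate derivatives this shows that a $V$-normalized, $J_q$-bounded family subject to these constraints is bounded in $W^q$. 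Complete continuity then follows by the standard contradiction argument: if it failed for some $\epsilon_0$, one extracts from such a family an $L_2$-convergent subsequence whose limit is annihilated by a separating countable family of functionals, hence is $0$, contradicting $V=1$ in the limit.

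Granted complete continuity, the diagonalization cited before the statement produces the eigenpairs $(\rho_\nu,\phi_\nu)$, with $\rho_\nu=J_q(\phi_\nu)/V(\phi_\nu)\ge 0$ immediate and $\rho_\nu\uparrow\infty$ being exactly the statement that the compact self-adjoint operator representing $V$ relative to $J_q$ has eigenvalues $1/\rho_\nu\downarrow 0$. For the substantive rate $\rho_\nu\asymp\nu^{2q}$ I would use the Courant--Fischer formula
\[
\rho_\nu=\min_{\dim S=\nu}\ \max_{0\ne h\in S}\frac{J_q(h)}{V(h)},
\]
combined with $C_1\|h\|_2^2\le V(h)\le C_2\|h\|_2^2$ to sandwich $\rho_\nu$ between fixed multiples of the eigenvalues $\tilde\rho_\nu$ of the same problem with $\omega\equiv 1$, giving $\rho_\nu\asymp\tilde\rho_\nu$ and removing the weight entirely.

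It then remains to show $\tilde\rho_\nu\asymp\nu^{2q}$, and I expect this final step to be the main obstacle, since one must convert a $2q$-th order eigenvalue problem into a genuine power law. The weak form $\int\phi^{(q)}\psi^{(q)}=\tilde\rho\int\phi\psi$ is the variational formulation of $(-1)^q\phi^{(2q)}=\tilde\rho\,\phi$ on $[a,b]$ with the natural boundary conditions $\phi^{(j)}(a)=\phi^{(j)}(b)=0$ for $q\le j\le 2q-1$. Because only the rate (not the exact constant) is needed, I would avoid solving the transcendental eigenvalue equation and instead bracket $\tilde\rho_\nu$ between the eigenvalues under clamped and under free boundary conditions; these quadratic forms differ by a perturbation of rank $\le 2q$, so by the interlacing consequence of min--max their counting functions $N(\rho)=\#\{\nu:\tilde\rho_\nu\le\rho\}$ agree up to an additive $O(1)$ and the boundary conditions do not affect the leading behaviour. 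For either bracketing problem the eigenfunctions are spanned by the exponentials $e^{\mu x}$ with $\mu^{2q}=(-1)^q\tilde\rho$, and a root-counting argument for the resulting determinantal equation gives $N(\rho)\asymp\rho^{1/(2q)}$, equivalently $\tilde\rho_\nu\asymp\bigl(\pi\nu/(b-a)\bigr)^{2q}\asymp\nu^{2q}$. Feeding this through the sandwich yields $\rho_\nu\asymp\nu^{2q}$ for all sufficiently large $\nu$, which is the content recorded in (3.17) of \citet{utreras2013optimal}.
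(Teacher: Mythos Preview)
The paper does not prove this proposition at all; it simply records it as a known fact and cites (3.17) of \citet{utreras2013optimal}. Your proposal is therefore not competing with a proof in the paper but supplying one where the authors chose to defer to the literature.

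Your outline is correct and is essentially the classical route that Utreras follows. The central move---using Courant--Fischer min--max together with the two-sided bound on $\omega$ to reduce to the unweighted problem $\omega\equiv 1$, and then reading off $\tilde\rho_\nu\asymp\nu^{2q}$ from the constant-coefficient ODE $(-1)^q\phi^{(2q)}=\tilde\rho\phi$---is exactly the argument behind the cited result. Your treatment of the boundary conditions via a finite-rank perturbation and eigenvalue interlacing is the standard way to make the asymptotics robust to the choice of self-adjoint realization, and it is adequate here since only the order $\nu^{2q}$ is claimed.

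One place to tighten: your contradiction argument for complete continuity is a little loose as written. The negation of complete continuity produces, for each finite family of linear functionals, a single bad $h$; to run a compactness argument you need to manufacture a \emph{sequence} in a fixed subspace. The cleanest fix is the one you already half-state: restrict once and for all to the kernel of $L_1,\dots,L_q$ (unisolvent for $\mathcal{P}_{q-1}$), on which $J_q$ is equivalent to the full $W^q$-norm by Bramble--Hilbert plus interpolation; then $V$ is a compact quadratic form relative to $J_q$, its eigenvalues tend to zero, and for any $\epsilon>0$ only finitely many exceed $\epsilon$. Adding the corresponding eigenfunctionals to $L_1,\dots,L_q$ gives the finite family required by the definition. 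This is equivalent to what you intend, just with the quantifiers arranged so that the compactness step is applied to a fixed space rather than inside a contradiction.
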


The following result, which is part of Lemma~9.1 of \cite{gu2013smoothing}, will
be used when studying the rate of convergence of the estimation error
(see Lemma~\ref{lemma:score}).

\begin{prop}\label{gu1}
Assume there is a constant $C>0$ such that $\rho_\nu \ge C \nu^{2q} \,
(q> 1/2), $ for all large $\nu$. If $\lambda_n
\rightarrow 0$, as $n\to \infty$, then
\begin{equation}\label{appx-equality}
\sum_{\nu}\frac{1}{1+\lambda_n\rho_{\nu}}=O(\lambda_n^{-1/(2q)}).
\end{equation}
\end{prop}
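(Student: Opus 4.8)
The plan is to estimate the sum $\sum_\nu (1+\lambda_n\rho_\nu)^{-1}$ by exploiting the polynomial growth $\rho_\nu \ge C\nu^{2q}$ directly, comparing the series to an integral. First I would split the sum at the index $\nu_n$ where the penalty term $\lambda_n\rho_\nu$ transitions from being $O(1)$ to dominating: since $\rho_\nu \asymp \nu^{2q}$, the crossover $\lambda_n\rho_\nu \asymp 1$ occurs at $\nu_n \asymp \lambda_n^{-1/(2q)}$. For $\nu \le \nu_n$ each summand is bounded by $1$, contributing $O(\nu_n) = O(\lambda_n^{-1/(2q)})$ terms, which already gives the claimed order. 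For $\nu > \nu_n$ I would bound $(1+\lambda_n\rho_\nu)^{-1} \le (\lambda_n C\nu^{2q})^{-1}$ and observe that $\sum_{\nu>\nu_n}\nu^{-2q}$ converges (here the hypothesis $q>1/2$ is essential, since $2q>1$) with tail $\asymp \nu_n^{1-2q}$, so this piece contributes $\lambda_n^{-1}\nu_n^{1-2q} \asymp \lambda_n^{-1}\lambda_n^{(2q-1)/(2q)} = \lambda_n^{-1/(2q)}$, matching the same order.

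Concretely, I would write
\[
\sum_{\nu}\frac{1}{1+\lambda_n\rho_\nu}
= \sum_{\nu \le \nu_n}\frac{1}{1+\lambda_n\rho_\nu}
+ \sum_{\nu > \nu_n}\frac{1}{1+\lambda_n\rho_\nu},
\]
bound the first sum by its number of terms and the second by $\lambda_n^{-1}C^{-1}\sum_{\nu>\nu_n}\nu^{-2q}$, then compare the latter tail to $\int_{\nu_n}^\infty x^{-2q}\,dx = \nu_n^{1-2q}/(2q-1)$. One caveat is that the bound $\rho_\nu \ge C\nu^{2q}$ is assumed only for all large $\nu$, not every $\nu$; since there are finitely many exceptional small indices, each of whose summands is at most $1$, they contribute an $O(1)$ term absorbed into $O(\lambda_n^{-1/(2q)})$ as $\lambda_n \to 0$, so this causes no difficulty. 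The assumption $\lambda_n \to 0$ guarantees $\nu_n \to \infty$, which is what makes the integral comparison and the $O(\lambda_n^{-1/(2q)})$ statement meaningful.

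I do not expect a genuine obstacle here; the result is an elementary integral-comparison estimate for a $p$-series with polynomially growing coefficients. The only points requiring mild care are choosing the crossover index $\nu_n$ so that the two pieces balance at the same order, and confirming that the convergence of the tail $\sum \nu^{-2q}$ uses precisely the hypothesis $q>1/2$ (equivalently $2q>1$). Everything else is bookkeeping with the constants $C$, $C_1$, $C_2$ absorbed into the $O(\cdot)$ notation as stipulated in the Notation subsection.
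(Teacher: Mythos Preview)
Your proposal is correct and is the standard elementary argument for this estimate. Note, however, that the paper does not supply its own proof of this proposition: it simply records the result as ``part of Lemma~9.1 of \cite{gu2013smoothing}'' and cites that reference. So there is no in-paper proof to compare against; your integral-comparison argument with the split at $\nu_n \asymp \lambda_n^{-1/(2q)}$ is exactly the kind of direct verification one would expect, and it correctly identifies $q>1/2$ as the condition making the tail $\sum_{\nu>\nu_n}\nu^{-2q}$ summable.
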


\section{Statement of the master theorems}\label{sec:main}
The rate of convergence of a penalized spline estimator depends on
three quantities:
\begin{itemize}
\item $p$---the smoothness $p$ of the unknown function (i.e., we
  assume $\eta_0\in  W^p[a,b]$);
\item $m$---the degree of the splines in $\bbG_n$;
\item $q$---the order of the penalty functional $J_q(g) = \int_{\mathcal U}
  \{g^{(q)}(x)\}^2\, dx$.
\end{itemize}
Here, $m+1$ is also called the order of the spline functions.

We make several (natural) restrictions on the choice of $p$, $m$, $q$,
as follows.
\begin{itemize}
\item $q \leq m$.
Since the $m$th derivative of a spline function of degree $m$ is
piecewise constant, the $(m+1)$th derivative of the spline function contains
Dirac delta functions, therefore the $(m+1)$th order penalty
functional is not defined, thus it is natural to require that $q \leq m$.
\item $p>1/2$.
This is to ensure that the spline space has desired approximation
properties (see Proposition~\ref{schumaker}). 
\item $q>1/2$.
This is to ensure the eigenvalues of the penalty functional have
desired rate of divergence (see Proposition~\ref{gu1}).
\end{itemize}

In this paper, we also restrict $p$ and $q$ to be integer-valued, which is
the most relevant in practical applications. To relax
this restriction, one needs only to supply a version of 
Propositions~\ref{schumaker} and \ref{rate_eigenvalue} that allow non-integer
values of $p$ and $q$. The rest of technical arguments is not affected.

The expected value of the penalized log-likelihood $p\ell(\eta)$ appeared in
\eqref{eq:plik} is
\[
\mathsf{p\Lambda}(\eta) = \Lambda(\eta) - \lambda_n J_q(\eta). 
\]
Denote its maximizers as
\begin{equation}
  \label{etabar_elm}
 \bar{\eta}_n = \text{arg max}_{g \in \bbG_n} \mathsf{p\Lambda}(g) = \text{arg
  max}_{g \in \bbG_n} \{\Lambda(g) - \lambda_n J_q(g)\}. 
\end{equation}
We can think that $\bar\eta_n$ is an approximation of
$\eta_0$, and the penalized spline estimator $\hat\eta_n$ directly estimates $\bar\eta_n$. Therefore we have the decomposition
\begin{equation}\label{decomp_hat_0}
  \hat{\eta}_n - \eta_0 = \hat{\eta}_n - \bar{\eta}_n + (\bar{\eta}_n - \eta_0),
\end{equation}
where $\hat{\eta}_n -\bar{\eta}_n$ and $\bar{\eta}_n - \eta_0$ are
referred to as the estimation error and the approximation error,
respectively.

\subsection{Approximation Error}

\begin{condition}  
\label{cond:exp-loglik}
There are constants $B > 0$ and constants $M_1, M_2>0$ such that 
  \begin{equation}\label{L_diff}
  -M_1 \|h\|^2 \leq \Lambda(\eta_0 + h)
  - \Lambda(\eta_0) \leq -M_2 \|h\|^2   
  \end{equation}
whenever $\|h\|_\infty \leq B$. 
\end{condition}

This condition says that the expected log-likelihood behaves like a
quadratic functional around its maximal point.

Recall $p' = p \wedge (m+1)$, as defined after Proposition~\ref{schumaker}.

\begin{thm}\label{approx_err}
Assume Condition~$\ref{cond:exp-loglik}$ holds.
  If $\lim_n \delta_n \lor \lambda_n =0 $ and 
$$\lim_n A_n^2 \allowbreak \{\delta^{2{p'}}_n \lor (\lambda_n \delta_n^{2(p'-q) \wedge 0})\} = 0,$$
  then $\bar{\eta}_n$ exists and $\|\bar{\eta}_n\|_\infty
  =O(1)$. Moreover, $\|\bar\eta_n - \eta_0\|_\infty = o(1)$ and
\[
\|\bar\eta_n-\eta_0\|^2   + \lambda_n  J_q(\bar{\eta}_n) = O\{\delta^{2{p'}}_n \lor (\lambda_n \delta_n^{2(p'-q) \wedge 0})\}.
\] 
\end{thm}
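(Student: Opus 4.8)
The plan is to compare the penalized--expectation maximizer $\bar\eta_n$ with the deterministic spline approximant $\eta^*_n$ supplied by Proposition~\ref{schumaker}, and to convert the resulting inequality into a rate through the local quadratic behavior of $\Lambda$ in Condition~\ref{cond:exp-loglik}. Writing $D(g) = \Lambda(\eta_0) - \Lambda(g) \ge 0$ (nonnegative since $\eta_0$ maximizes $\Lambda$), optimality of $\bar\eta_n$ over $\bbG_n$ gives $\mathsf{p\Lambda}(\bar\eta_n) \ge \mathsf{p\Lambda}(\eta^*_n)$, which rearranges to the basic inequality
\[
D(\bar\eta_n) + \lambda_n J_q(\bar\eta_n) \le D(\eta^*_n) + \lambda_n J_q(\eta^*_n).
\]
Because $\|\eta^*_n - \eta_0\|_\infty \lesssim \delta_n^{p'-1/2} \to 0$, the lower bound in Condition~\ref{cond:exp-loglik} applies to $\eta^*_n$ and gives $D(\eta^*_n) \le M_1\|\eta^*_n-\eta_0\|^2$; with $J_q(\eta^*_n)\lesssim \delta_n^{2(p'-q)\wedge 0}$ from Proposition~\ref{schumaker} and the norm equivalence $\|\cdot\|\asymp\|\cdot\|_2$, the right-hand side is $O(R_n)$, where $R_n := \delta_n^{2p'}\lor(\lambda_n\delta_n^{2(p'-q)\wedge 0})$. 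Hence $D(\bar\eta_n) \le O(R_n)$ and $\lambda_n J_q(\bar\eta_n)\le O(R_n)$; note $A_n^2R_n\to 0$ together with $A_n\to\infty$ already forces $R_n\to 0$.

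The hard part is upgrading $D(\bar\eta_n)\le O(R_n)$ to the quadratic estimate $\|\bar\eta_n-\eta_0\|^2 \lesssim R_n$: the upper bound in Condition~\ref{cond:exp-loglik} is only available when $\|\bar\eta_n-\eta_0\|_\infty \le B$, which is essentially the conclusion $\|\bar\eta_n-\eta_0\|_\infty=o(1)$ we are after, so there is a genuine circularity. To break it I would first promote Condition~\ref{cond:exp-loglik} to a global statement using concavity alone: for $\|h\|_\infty > B$, expressing $\eta_0 + (B/\|h\|_\infty)h$ as a convex combination of $\eta_0$ and $\eta_0+h$ and invoking concavity yields $D(\eta_0+h) \ge M_2 B\,\|h\|^2/\|h\|_\infty$. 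This global decay gives coercivity of $\mathsf{p\Lambda}$ on the finite-dimensional space $\bbG_n$ (where all norms are comparable), so a maximizer $\bar\eta_n$ exists, and strict concavity of $\Lambda$ makes it unique.

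With the global bound in hand I would run a dichotomy on $a_\infty := \|\bar\eta_n-\eta_0\|_\infty$. If $a_\infty\le B$, the local bound applies and $M_2\|\bar\eta_n-\eta_0\|^2 \le D(\bar\eta_n) \le O(R_n)$, the desired estimate. If instead $a_\infty > B$, the global bound gives $\|\bar\eta_n-\eta_0\|^2 \lesssim R_n\,a_\infty$; combined with $a_\infty \le A_n(\|\bar\eta_n-\eta_0\|+\|\eta^*_n-\eta_0\|)+\|\eta^*_n-\eta_0\|_\infty$ (valid since $\bar\eta_n-\eta^*_n\in\bbG_n$, using Proposition~\ref{bspline_constant}) this yields a quadratic inequality in $\|\bar\eta_n-\eta_0\|$ whose solution, under $A_n^2R_n\to 0$, forces $A_n\|\bar\eta_n-\eta_0\|\to 0$ and hence $a_\infty\to 0$, contradicting $a_\infty>B$ for large $n$. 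So only the first case survives asymptotically. Finally, feeding $\|\bar\eta_n-\eta_0\|\lesssim\sqrt{R_n}$ back into the same $A_n$ inequality gives $a_\infty \lesssim \sqrt{A_n^2R_n}+\delta_n^{p'-1/2}=o(1)$, which delivers both $\|\bar\eta_n-\eta_0\|_\infty=o(1)$ and $\|\bar\eta_n\|_\infty=O(1)$, while the basic inequality supplies the $\lambda_n J_q(\bar\eta_n)$ term. The crux throughout is this $L_\infty$ circularity, and the quantitative hypothesis $A_n^2R_n\to 0$ (equivalently $R_n\prec\delta_n$, via $A_n\asymp\delta_n^{-1/2}$) is precisely what closes it.
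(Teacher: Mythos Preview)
Your approach is correct and takes a genuinely different route from the paper's. The paper avoids your dichotomy by introducing a ``Convexity Lemma'' (Lemma~\ref{lemma:convex}): rather than first establishing the basic inequality and then fighting the $L_\infty$ circularity a posteriori, it works directly on a shell $\{g\in\bbG_n : \|g-\eta_n^*\| + \lambda_n^{1/2}J_q^{1/2}(g-\eta_n^*) = s\}$ with $s$ a large multiple of $R_n^{1/2}$, observes that every $g$ on this shell already satisfies $\|g-\eta_0\|_\infty \le B$ (precisely because $A_n s \to 0$), applies Condition~\ref{cond:exp-loglik} there to show $C(g) > C(\eta_n^*)$ for $C = -\Lambda + \lambda_n J_q$, and then invokes convexity of $C$ to trap the minimizer inside the ball $\{L\le s\}$. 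Your global extension $D(\eta_0+h) \ge M_2 B\,\|h\|^2/\|h\|_\infty$ via concavity is a clean self-contained device that replaces this localization, and your coercivity argument for existence is more explicit than the paper's (where existence is implicit in the shell argument plus finite-dimensionality of $\bbG_n$). The paper's approach has the payoff of reusability: the same Convexity Lemma, in its directional-derivative variant~\eqref{temp_2}, also drives the proof of Theorem~\ref{est_err} for the stochastic estimation error, giving a unified treatment of both halves of the decomposition $\hat\eta_n-\eta_0 = (\hat\eta_n-\bar\eta_n)+(\bar\eta_n-\eta_0)$; your basic-inequality-plus-dichotomy is tailored to the deterministic approximation part.
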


\subsection{Estimation Error}

To simplify notation, we shall omit $\mathbf{W}_1, \ldots,
\mathbf{W}_n$ when we write the log-likelihood functional
in $\ell(h;\mathbf{W}_1, \ldots, \mathbf{W}_n)$.

Because $l(\bar\eta_n + \alpha g)$ is a concave function of $\alpha$,
it admits left and right derivatives and is differentiable at all but
countable many points. Denote the directional derivative at $\bar\eta_n$ along the direction of
$g$ as
\[
\dot{l}[\bar\eta_n; g] =\frac{d}{d\alpha} l(\bar\eta_n + \alpha g) \bigg |_{\alpha=0^+},
\]
where the dependence $\dot{l}[\bar\eta_n; g]$ on $\mathbf{W}_i$ is suppressed in our notation for simplicity.
Using the mild assumption that we can exchange differentiation and
expectation, we have $E \{\dot{l}[\bar\eta_n; g]\} = (d/d\alpha) \Lambda(\bar{\eta}_n +
    \alpha g)|_{\alpha=0^+}$.

Since $\bar{\eta}_n$ maximizes the concave functional
$\mathsf{p\Lambda}(\cdot)$ over $\bbG_n$, it satisfies the first order condition
  \begin{equation*}
    \frac{d}{d\alpha} \Lambda(\bar{\eta}_n + \alpha g)\bigg|_{\alpha=0^+} - 2
    \lambda_n J_q(\bar{\eta}_n, g) = 0, \qquad g \in \bbG_n.
  \end{equation*}
Thus, for any $g \in \bbG_n$, we have that 
\begin{equation*}
    \frac{d}{d\alpha} \ell(\bar{\eta}_n +
    \alpha g)\bigg|_{\alpha=0^+} - 2\lambda_n J_q(\bar{\eta}_n,g) = \frac{d}{d\alpha}\ell(\bar{\eta}_n +
    \alpha g)\bigg|_{\alpha=0^+} - \frac{d}{d\alpha} \Lambda(\bar{\eta}_n +
    \alpha g)\bigg|_{\alpha=0^+}. 
\end{equation*}
Consequently, we have
\begin{equation}\label{eq:gradient}
\frac{d}{d\alpha} \ell(\bar{\eta}_n + \alpha g)\bigg|_{\alpha=0^+} - 2 \lambda_n
J_q(\bar{\eta}_n,g)
= (E_n - E) \; \dot{l}[\bar\eta_n; g].
\end{equation}

\begin{condition}  \label{l_12der}
$(i)$ 
\begin{equation*}
\sup_{g \in \bbG_n} \;  \frac{|(E_n - E) \; \dot{l}[\bar\eta_n; g]|^2}{\|g\|^2 + \lambda_n
  J_q(g)} = O_P\biggl( \frac{1}{n\lambda_n^{1/(2q)}}
\bigwedge \frac{1}{n\delta_n} \biggr).
\end{equation*}

$(ii)$ There are constants $B>0$ and $M > 0$ such that, with probability tending
to one as $n \to \infty$, we have that for all $g \in \bbG_n$ with
$\|g\|_\infty \leq B$,
  \begin{equation*}
   \frac{d}{d\alpha} \ell(\bar{\eta}_n + \alpha
   g)\bigg|_{\alpha = 1^+} - \frac{d}{d\alpha} \ell(\bar{\eta}_n + \alpha
   g)\bigg|_{\alpha = 0^+} \leq - M \|g\|^2.  
  \end{equation*}
\end{condition}

\begin{thm}
  \label{est_err}
  Assume Condition $\ref{l_12der}$ holds. If $\lim_n \delta_n \lor \lambda_n =0 $ and 
$$\lim_n
  A_n^2 \allowbreak (\frac{1}{n\lambda^{1/(2q)}_n} \wedge \frac{1}{n\delta_n}) = 0,$$
  then $\|\hat\eta_n - \bar\eta_n\|_\infty = o_P(1)$ and 
\[
\|\hat\eta_n - \bar\eta_n\|^2 + \lambda_n J_q(\hat{\eta}_n - \bar{\eta}_n) =
  O_p\biggl(\frac{1}{n  \lambda_n^{1/(2q)}}\bigwedge \frac{1}{n \delta_n} \biggr).
\] 
\end{thm}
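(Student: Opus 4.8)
The plan is to establish a ``basic inequality'' bounding the penalized excess empirical risk from below by a quadratic in $\hat g := \hat\eta_n - \bar\eta_n$, to match this against the stochastic fluctuation controlled by Condition~\ref{l_12der}(i), and to solve for $\|\hat g\|^2 + \lambda_n J_q(\hat g)$. Throughout I write $r_n^2 = (n\lambda_n^{1/(2q)})^{-1}\wedge(n\delta_n)^{-1}$ for the target rate, so that the growth hypothesis reads $A_n^2 r_n^2\to0$, hence $A_n r_n\to 0$; recall also that $\|\cdot\|$ and $\|\cdot\|_2$ are equivalent and that $\|g\|_\infty\le A_n\|g\|_2$ by the definition \eqref{A_n} of $A_n$.

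First I would prove the following deterministic basic inequality: for every $g\in\bbG_n$ with $\|g\|_\infty\le B$ such that $p\ell(\bar\eta_n+g)\ge p\ell(\bar\eta_n)$, one has $\tfrac{M}{2}\|g\|^2+\lambda_n J_q(g)\le (E_n-E)\,\dot{l}[\bar\eta_n;g]$. The crucial device is to feed Condition~\ref{l_12der}(ii) not only to $g$ but to every rescaling $sg$, $s\in(0,1]$ (each admissible since $\|sg\|_\infty\le B$). Writing $\phi(u)=\ell(\bar\eta_n+ug)$, a one-sided chain rule turns (ii) applied to $sg$ into $\phi'_+(s)\le\phi'_+(0)-Ms\|g\|^2$ for all $s\in(0,1]$. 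Integrating over $s\in[0,1]$ (a finite concave $\phi$ is absolutely continuous and equals the integral of its right derivative) yields the quadratic excess-risk bound $\ell(\bar\eta_n+g)-\ell(\bar\eta_n)\le\phi'_+(0)-\tfrac{M}{2}\|g\|^2$. Adding the penalty increment $-\lambda_n\{2J_q(\bar\eta_n,g)+J_q(g)\}$, rewriting $\phi'_+(0)-2\lambda_n J_q(\bar\eta_n,g)=(E_n-E)\,\dot{l}[\bar\eta_n;g]$ through the first-order identity \eqref{eq:gradient}, and invoking $p\ell(\bar\eta_n+g)\ge p\ell(\bar\eta_n)$ gives the claim. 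Combining it with Condition~\ref{l_12der}(i), which bounds $|(E_n-E)\,\dot{l}[\bar\eta_n;g]|$ by $O_P(r_n)\{\|g\|^2+\lambda_n J_q(g)\}^{1/2}$, and absorbing, I obtain $\|g\|^2+\lambda_n J_q(g)=O_P(r_n^2)$, in particular $\|g\|=O_P(r_n)$, for every such $g$.

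The main obstacle is that this basic inequality presupposes $\|g\|_\infty\le B$, whereas a priori nothing controls $\|\hat g\|_\infty$; I would break this circularity (and settle existence of $\hat\eta_n$) by a convexity-localization argument. Restrict the concave continuous $p\ell$ to the compact convex set $K_B=\{g\in\bbG_n:\|g-\bar\eta_n\|_\infty\le B\}$ and let $\tilde\eta$ maximize $p\ell$ over $K_B$, which exists by compactness. Since $\bar\eta_n\in K_B$ we have $p\ell(\tilde\eta)\ge p\ell(\bar\eta_n)$ and $\|\tilde\eta-\bar\eta_n\|_\infty\le B$, so the basic inequality applies to $\tilde g=\tilde\eta-\bar\eta_n$ and gives $\|\tilde g\|=O_P(r_n)$; hence $\|\tilde g\|_\infty\le A_n\|\tilde g\|_2\lesssim A_n r_n=o_P(1)<B$ with probability tending to one. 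Thus $\tilde\eta$ lies in the interior of $K_B$ and, being a local maximizer of the concave $p\ell$ over all of $\bbG_n$, is the global maximizer $\hat\eta_n$. This simultaneously yields existence and shows that, with probability tending to one, $\hat g=\tilde g$ satisfies $\|\hat g\|_\infty\le B$.

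On that event the basic inequality applies directly to $\hat g$, giving $\|\hat\eta_n-\bar\eta_n\|^2+\lambda_n J_q(\hat\eta_n-\bar\eta_n)=O_P(r_n^2)$, the asserted rate; the sup-norm conclusion then follows from $\|\hat g\|_\infty\le A_n\|\hat g\|_2\lesssim A_n r_n=o_P(1)$. Beyond the localization step, the only delicate points are the bookkeeping with one-sided directional derivatives of the possibly non-differentiable $\ell$ (essential for, e.g., quantile regression), which the passage to rescalings $sg$ and the integration of $\phi'_+$ dispatch cleanly, and checking that the positive homogeneity $\dot{l}[\bar\eta_n;sg]=s\,\dot{l}[\bar\eta_n;g]$ used in the chain rule is consistent with \eqref{eq:gradient} and with the sup in Condition~\ref{l_12der}(i).
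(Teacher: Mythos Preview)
Your argument is correct and reaches the same conclusion, but the route differs from the paper's. The paper applies its Convexity Lemma (Lemma~\ref{lemma:convex}) with $C(g)=-\ell(g)+\lambda_n J_q(g)$ and the penalized-norm level function $L(g)=\|g-\bar\eta_n\|+\lambda_n^{1/2}J_q^{1/2}(g-\bar\eta_n)$: on the shell $L(g)=a\,r_n$ it evaluates the directional derivative of $C$ at $\alpha=1^+$, splits it as $\mathrm{I}+\mathrm{II}$, bounds $|\mathrm{I}|=O_P(s^2/a)$ via Condition~\ref{l_12der}(i) and $\mathrm{II}\ge c\,s^2$ via Condition~\ref{l_12der}(ii), and chooses $a$ large so that the derivative is positive, forcing the minimizer inside the shell. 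You instead derive a \emph{basic inequality} by applying Condition~\ref{l_12der}(ii) to every rescaling $sg$ and integrating the resulting bound $\phi'_+(s)\le\phi'_+(0)-Ms\|g\|^2$ over $s\in[0,1]$; you then localize to the sup-norm ball $K_B$, show the constrained maximizer is interior (hence global) via $A_n r_n\to 0$, and read off the rate. Both arguments hinge on the same two conditions and on convexity; the paper's version works entirely at the level of one-sided derivatives on a single shell and avoids the integration step, while your basic-inequality formulation is closer to standard M-estimation presentations and delivers existence of $\hat\eta_n$ as an explicit byproduct of the interior-point step.
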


\subsection{Summary}\label{sec:summary}
Combining the results of Theorems \ref{approx_err} and \ref{est_err},
we obtain the following result that gives the rate of convergence of
$\|\hat{\eta}_n - \eta_0\|^2$ to zero.
The result also gives a bound for the size of  $J_{q}(\hat{\eta}_n)$.

\begin{cor}\label{cor:conv_rates}
Assume Conditions~$\ref{cond:exp-loglik}$ and $\ref{l_12der}$ hold.
 If $\lim_n \delta_n \lor \lambda_n =0 $ and 
\begin{equation}\label{eq:cond_cor}
\lim_n A_n^2 \biggl (\delta^{2{p'}}_n \lor (\lambda_n \delta_n^{2(p'-q) \wedge 0} )
+ \frac{1}{n\lambda_n^{1/(2q)}} \bigwedge \frac{1}{n\delta_n}\biggr) = 0,
\end{equation}
 then $\|\hat{\eta}_n - \eta_0\|_\infty = o_P(1)$ and
\begin{equation*}
\|\hat{\eta}_n - \eta_0\|^2 + \lambda_n J_{q}(\hat{\eta}_n) 
= O_p\biggl(\delta_n^{2{p'}} \vee  (\lambda_n \delta_n^{2(p'-q) \wedge 0})   + \frac{1}{n
    \lambda_n^{1/(2q)}}\bigwedge \frac{1}{n \delta_n} \biggr).
\end{equation*}
\end{cor}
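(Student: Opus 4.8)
The plan is to derive the corollary by combining the two master theorems along the decomposition \eqref{decomp_hat_0}, $\hat\eta_n - \eta_0 = (\hat\eta_n - \bar\eta_n) + (\bar\eta_n - \eta_0)$. First I would check that hypothesis \eqref{eq:cond_cor} supplies the hypotheses of both Theorem~\ref{approx_err} and Theorem~\ref{est_err}. The assumption $\lim_n \delta_n \lor \lambda_n = 0$ is common to all three statements. Moreover, the two quantities
\[
\delta_n^{2p'} \lor (\lambda_n \delta_n^{2(p'-q)\wedge 0}) \quad\text{and}\quad \frac{1}{n\lambda_n^{1/(2q)}} \wedge \frac{1}{n\delta_n}
\]
are both nonnegative, so that $A_n^2$ times their sum tending to $0$ forces $A_n^2$ times each summand to tend to $0$ separately. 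These are precisely the growth conditions required by Theorem~\ref{approx_err} and Theorem~\ref{est_err}, respectively, so both conclusions are at my disposal.

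Second, I would bound the two pieces of \eqref{decomp_hat_0} using the triangle inequality, treating the norm $\|\cdot\|$ and the penalty separately. The functional $\|\cdot\|$ is a genuine norm, and $\sqrt{J_q(\cdot)} = \|(\cdot)^{(q)}\|_2$ is a seminorm, so both obey the triangle inequality; combining this with the elementary bound $(a+b)^2 \le 2a^2 + 2b^2$ gives
\begin{align*}
\|\hat\eta_n - \eta_0\|^2 &\le 2\|\hat\eta_n - \bar\eta_n\|^2 + 2\|\bar\eta_n - \eta_0\|^2, \\
J_q(\hat\eta_n) &\le 2 J_q(\hat\eta_n - \bar\eta_n) + 2 J_q(\bar\eta_n).
\end{align*}
Adding the first line to $\lambda_n$ times the second and regrouping yields
\[
\|\hat\eta_n - \eta_0\|^2 + \lambda_n J_q(\hat\eta_n) \le 2\{\|\hat\eta_n - \bar\eta_n\|^2 + \lambda_n J_q(\hat\eta_n - \bar\eta_n)\} + 2\{\|\bar\eta_n - \eta_0\|^2 + \lambda_n J_q(\bar\eta_n)\}.
\]
The first brace is $O_P\{(n\lambda_n^{1/(2q)})^{-1} \wedge (n\delta_n)^{-1}\}$ by Theorem~\ref{est_err}, and the second is $O\{\delta_n^{2p'} \lor (\lambda_n \delta_n^{2(p'-q)\wedge 0})\}$ by Theorem~\ref{approx_err}; their sum is exactly the asserted rate.

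Finally, the uniform statement $\|\hat\eta_n - \eta_0\|_\infty = o_P(1)$ follows from $\|\hat\eta_n - \eta_0\|_\infty \le \|\hat\eta_n - \bar\eta_n\|_\infty + \|\bar\eta_n - \eta_0\|_\infty$ together with $\|\hat\eta_n - \bar\eta_n\|_\infty = o_P(1)$ (Theorem~\ref{est_err}) and $\|\bar\eta_n - \eta_0\|_\infty = o(1)$ (Theorem~\ref{approx_err}). There is no genuine obstacle here, since all the analytic work is carried by the two master theorems; the only points requiring care are the two routine bookkeeping steps above, namely verifying that the single smallness condition \eqref{eq:cond_cor} unpacks into the separate hypotheses of the two theorems, and noting that the penalty, although only a seminorm (being the squared $L_2$-norm of a $q$th derivative), still satisfies the triangle inequality so that no cross term survives.
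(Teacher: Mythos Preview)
Your proposal is correct and follows exactly the approach the paper intends: the corollary is stated as an immediate combination of Theorems~\ref{approx_err} and~\ref{est_err} via the decomposition~\eqref{decomp_hat_0}, and you have simply spelled out the routine bookkeeping (splitting condition~\eqref{eq:cond_cor} into the two separate growth conditions, and applying the triangle inequality to $\|\cdot\|$, $J_q^{1/2}$, and $\|\cdot\|_\infty$) that the paper leaves implicit.
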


\begin{table}[t]
\caption{Seven scenarios for rate of convergence {\rm ($\|\hat\eta_n - \eta_0\|^2 + \lambda_n J_q(\hat\eta_n)$)} of penalized spline estimators}
\label{summary_table}
\begin{center}
\begin{tabular}{c >{\centering\arraybackslash}p{5cm} c}
\hline
\textbf{Rate of convergence} &\textbf{Parameters for achieving the
                                                 best rate} & \textbf{Best rate} \\[1em]
\hline\hline
\multicolumn{3} {c}{I. $ q<p'$ (i.e., $q < p$ and $q < m+1$)}\\[1em]
\hline
\multicolumn{3} {l}{1. $\lambda_n \lesssim \delta_n^{2p'}$} \\[.5em]
 $\delta_n^{2p'} + (n\delta_n)^{-1}$
 & $\delta_n\asymp n^{-1/(2p'+1)} $&  $n^{-2p'/(2p'+1)}$  (*) \\
\hline
\multicolumn{3} {l}{2. $\delta_n^{2p'} \lesssim \lambda_n \lesssim
  \delta_n^{2q}$} 
\\[.5em]
 $\lambda_n + (n \delta_n)^{-1}$
 & $\lambda_n \asymp \delta_n^{2p'}$,
$\delta_n\asymp n^{-1/(2p'+1)}$ &  $n^{-2p'/(2p'+1)}$  (*)  \\
\hline
\multicolumn{3} {l}{3. $\lambda_n \gtrsim \delta_n^{2q}$}\\[.5em]
$\lambda_n + (n \lambda_n^{1/(2q)})^{-1}$
& $\lambda_n \asymp n^{-2q/(2q+1)}$ &  $n^{-2q/(2q+1)}$ \\
\hline\hline
\multicolumn{3} {c}{II. $ q= p'(=p)$ (i.e., $p = q \leq m$)}\\[1em]
\hline
\multicolumn{3} {l}{1. $\lambda_n \lesssim \delta_n^{2p}$} \\[.5em]
 $\delta_n^{2p} + (n \delta_n)^{-1}$ 
& $\delta_n \asymp n^{-1/(2p+1)}$& $n^{-2p/(2p+1)}$ (**) \\
\hline
\multicolumn{3} {l}{2. $\lambda_n \gtrsim \delta_n^{2p}$}\\[.5em]
 $\lambda_n  + (n \lambda_n^{1/(2p)})^{-1}$ 
& $\lambda_n \asymp n^{-2p/(2p+1)}$ & $n^{-2p/(2p+1)}$ (**) \\
\hline\hline
\multicolumn{3} {c}{III. $ q > p' (= p)$ (i.e., $p < q \leq m$)}\\[1em]
\hline 
\multicolumn{3} {l}{1. $\lambda_n \lesssim \delta_n^{2q}$}\\[.5em]
 $\delta_n^{2p} + (n \delta_n)^{-1}$ 
& $\delta_n \asymp n^{-1/(2p+1)}$ & $n^{-2p/(2p+1)}$  (**)\\
\hline
\multicolumn{3} {l}{2. $\lambda_n \gtrsim\delta_n^{2q}   $}\\[.5em]
 $ \lambda_n \delta_n^{2p-2q} +  (n \lambda_n^{1/(2q)})^{-1}$ 
&  $\delta_n \asymp \lambda_n^{1/(2q)}$, $\lambda_n \asymp n^{-2q/(2p
  +1)}$& $n^{-2p/(2p+1)}$  (**)\\
\hline
\end{tabular}
\end{center}
(*) achieving Stone's optimal rate when $p'=p$,  (**) achieving Stone's optimal rate 
\end{table}

This result covers all practical combinations of $p$, $q$ and $m$ with
the only restriction being the necessary requirement $q \leq m$ (otherwise the
penalty functional is not defined). Following this result, the
asymptotic behavior of the penalized splines can be classified into
seven scenarios as shown in Table~\ref{summary_table}. Cases II.1 and II.2 contain
a typical application scenario of using penalized cubic splines with a second-order
penalty ($m=3$, $q=2$) to estimate a function with a continuous second
derivative $p=2$. Using Proposition~\ref{bspline_constant}, Condition~\eqref{eq:cond_cor} can be simplified in each
scenario as follows:
\begin{itemize}
\item Cases I.1, II.1, III.1: $p' > 1/2$, $n \delta_n^2 \to \infty$.
\item Case I.2: $n \delta_n^2 \to \infty$, $\lambda_n / \delta_n \to
  0$.
\item Cases I.3, II.2, III.2: $n \delta_n \lambda_n^{1/(2q)}  \to \infty$  (or its
  sufficient condition $n \delta_n^2 \to \infty$), $\lambda_n / \delta_n \to
  0$.
\end{itemize}
An overall sufficient condition for all these conditions to hold is 
$p' > 1/2$, $n \delta_n^2 \to \infty$, and $\lambda_n / \delta_n^{1+
  2(q - p)\wedge 0} \to 0$.

From Table~\ref{summary_table}, we observe that the asymptotic behavior of the penalized
spline estimators depend on the interplays among the smoothness of
unknown function, spline degree, penalty order, spline knot number,
and penalty parameter.

In Cases I.1, I.2, II.1. III.1, $\lambda_n \lesssim \delta_n^{2q}$. 
Since using a small $\lambda_n$ indicates light
penalization, we may refer to these cases as the light
penalty scenarios. Alternatively, since $\delta^{-1}_n \lesssim
\lambda_n^{1/{(2q)}}$ and $\delta_n^{-1}$ essentially quantifies the 
number of knots, we may also refer to these cases as the small knot
number scenarios. The behavior of the penalized splines in these scenarios is similar to that of an
unpenalized polynomial spline estimator
\citep[e.g.,][]{huang2003asymptotics}. In Cases I.1 (if $p\le m+1$),
II.1, III.1, the penalized spline estimator achieves Stone's optimal rate of
convergence $n^{-2p/(2p+1)}$ \citep{stone1982}, if the tuning
parameter $\delta_n$ is chosen such that $\delta_n \asymp
n^{-1/(2p+1)}$. 
In Case I.2 (if $p \le m+1$), Stone's optimal rate can be achieved if
we tune both parameters so that $\delta_n \asymp
n^{-1/(2p+1)}$ and $\lambda_n \asymp \delta_n^{2p}$.
If $p>m+1$  (Cases I.1 and I.2), the best rate of convergence of
penalized spline estimator is controlled by the spline order $m+1$, Stone's optimal rate of convergence cannot
be achieved, as for the unpenalized polynomial spline estimators; this
is due to the saturation of spline approximation (see the discussion
following Proposition~\ref{schumaker}).

In Cases I.3, II.2, III.2,
$\lambda_n \gtrsim \delta_n^{2q}$. We may refer to these cases as the heavy penalty scenarios.
Alternatively, since $\delta^{-1}_n \gtrsim \lambda_n^{1/{(2q)}}$, we may
also refer to these cases as the large knot number scenarios. 
The behavior shown in Case II.2 is similar to that of a smoothing
spline estimator \citep[e.g.,][]{gu2013smoothing} and Stone's optimal rate of
convergence $n^{-2p/(2p+1)}$ can be achieved by choosing $\lambda_n
\asymp n^{-2p/(2p+1)}$. The results for Cases I.3 and III.2 show different behaviors of the
penalized spline estimators in the heavy penalty scenarios when the penalty order $q$ differs from the
smoothness $p$ of the unknown function. If $q<p$ (Case I.3), the best rate of
convergence of penalized spline estimator is controlled by $q$, which
is $n^{-2q/(2q+1)}$ and is slower than Stone's optimal rate
$n^{-2p/(2p+1)}$.
This result suggests that, in heavy penalty scenarios, using a
penalty with order smaller than the true smoothness will hurt the
ability of penalized splines to achieve the optimal rate of
convergence. On the other hand, if $q>p$ (Case III.2), the
penalized spline estimator can achieve Stone's optimal rate
if we tune both parameters so that $\lambda_n \asymp n^{-2p/(2p+1)}$ and $\delta_n
\asymp \lambda_n^{1/(2q)}$.

In the context of least squares regression, rates of convergence for
penalized spline estimators have been extensively studied when $q\leq
p$ (corresponding to Cases I and II in Table 1); the best available results
are given in \citet{claeskens2009asymptotic, holland2017penalized, xiao2019}.
Our results match the best available results for Cases I.1, I.3, II.2, II.3. For
Case I.2, the best available result for rate of convergence is $\lambda_n^2
\delta_n^{-2q} + (n\delta_n)^{-1}$ (e.g., Theorem 1(a) of
\citet{claeskens2009asymptotic} for $p=m+1$,
Theorem 5.1 of \citet{xiao2019}), which is always no larger than the
rate shown in Table I, $\lambda_n + (n\delta_n)^{-1}$. 
When $p\leq m+1$ so that $p'=p$, to achieve
Stone's optimal rate, one needs to choose $\delta_n \asymp n^{-1/(2p+1)}$ in $\lambda_n^2
\delta_n^{-2q} + (n\delta_n)^{-1}$, and also require
that $\lambda_n^2 \delta_n^{-2q} \lesssim n^{-2p/(2p+1)}$, or
equivalently $\lambda_n \lesssim n^{-(p+q)/(2p+1)}$. This requirement
on $\lambda_n$ is slightly looser than our requirement $\lambda_n \lesssim n^{-2p/(2p+1)}$
in Cases I.1 and I.2 of Table 1.

It is worthwhile to point out that our result in Corollary~\ref{cor:conv_rates}
not only bound the squared $L_2$-norm $\|\hat\eta_n - \eta_0\|^2$ but also bound the penalty
functional $J_q(\hat\eta_n)$, and thus it is stronger than existing
results which bound only the $L_2$-norm. For this reason,
we believe our rate of convergence in Case I.2, $\lambda_n
+ (n\delta_n)^{-1}$, cannot be improved to match the best available
result of squared $L_2$-norm rate $\lambda_n^2
\delta_n^{-2q} + (n\delta_n)^{-1}$ mentioned above. To see this, suppose otherwise,  i.e.,
$$\|\hat\eta_n - \eta_0\|^2 + \lambda_n J_q(\hat\eta_n) = O(\lambda_n^2 \delta_n^{-2q} + (n\delta_n)^{-1}).$$
When $\lambda_n^2 \delta_n^{-2q} \geq  (n\delta_n)^{-1}$, the first
term dominates the rate of convergence, and we have $J_q(\hat\eta_n) =
O(\lambda_n \delta_n^{-2q}).$ If $\lambda_n/ \delta_n^{2q} \to 0$ (which
falls in Case I.2), then we obtain $J_q(\hat\eta_n) \to 0$, which is
generally implausible. For instance, $J_q(\hat\eta_n)=0$ for $q=2$
means that $\hat\eta_n$ is a straight line, and $J_2(\hat\eta_n) \to
0$ suggests that $\hat\eta_n$ becomes closer and closer to a straight line
when the sample size $n \to \infty$.

We are not aware any existing results for Cases III.1 and III.2. Our
results for these two scenarios answer the following question: When
the smoothness of the unknown function is not given, if one uses a
penalty that assumes more derivatives than the unknown function, how
will the penalized spline estimator behave asymptotically? 
Our answer is that it does not hurt the ability of
penalized spline estimator to achieve Stone's optimal rate of
convergence. This question is of interest because in practice prior knowledge
about the degree of smoothness of the unknown function is usually
unavailable.

\section{Proof of the master theorems}\label{sec:proof}

This section gives the proof of the main theorems of convergence rates of
the penalized spline estimator, that is, Theorems \ref{approx_err} and
\ref{est_err}. The argument makes use of the convexity and is an
extension of that in \cite{huang2001concave}.
We first present a lemma that will play an important role in our proof.

\begin{lemma}[Convexity Lemma]\label{lemma:convex} Suppose $C(\cdot)$ is a convex functional and $L(\cdot)$ is a continuous functional defined on a convex set $\mathcal{C}$ of functions.\\
If there exists a function $\eta^\dag\in \mathcal{C}$ and a real
number $s$ with $L(\eta^\dag) < s$ such that for all $\eta \in
  \mathcal{C}$ satisfying $L(\eta) = s$, we have either
  \begin{equation}\label{temp_1}
    C(\eta^\dag) < C(\eta),
  \end{equation}
or
  \begin{equation}\label{temp_2}
   \frac{\partial}{\partial \beta}\, C(\eta^\dag + \beta (\eta -
   \eta^\dag) ) \bigg|_{\beta=1^+}>0,
  \end{equation}
then any minimizer $\eta_{\mathrm{min}}$ of
$C(\cdot)$ in $\mathcal{C}$ satisfies $L(\eta_{\mathrm{min}}) \leq s$. 
\end{lemma}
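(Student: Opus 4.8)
The plan is to argue by contradiction. Suppose $\eta_{\min}$ minimizes $C$ over $\mathcal{C}$ but $L(\eta_{\min}) > s$. Since $L(\eta^\dag) < s < L(\eta_{\min})$, I would first produce a point on the segment joining $\eta^\dag$ to $\eta_{\min}$ at which $L$ equals $s$. Because $\mathcal{C}$ is convex, the whole segment $\eta_\beta := \eta^\dag + \beta(\eta_{\min} - \eta^\dag)$, $\beta \in [0,1]$, lies in $\mathcal{C}$; because $L$ is continuous and $\beta \mapsto \eta_\beta$ is continuous into the function space, the scalar map $\beta \mapsto L(\eta_\beta)$ is continuous with $L(\eta_0) < s$ and $L(\eta_1) > s$. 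By the intermediate value theorem there is $\beta^* \in (0,1)$ with $L(\eta_{\beta^*}) = s$; set $\eta := \eta_{\beta^*}$. This $\eta$ satisfies the hypothesis of the lemma, so one of the two stated alternatives holds.

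Next I would dispose of the first alternative. If $C(\eta^\dag) < C(\eta)$, I would exploit convexity of $C$ together with minimality of $\eta_{\min}$. Since $\eta = (1-\beta^*)\eta^\dag + \beta^* \eta_{\min}$, convexity gives $C(\eta) \le (1-\beta^*) C(\eta^\dag) + \beta^* C(\eta_{\min})$; using $C(\eta_{\min}) \le C(\eta^\dag)$ (as $\eta_{\min}$ minimizes $C$ and $\eta^\dag \in \mathcal{C}$), this bound collapses to $C(\eta) \le C(\eta^\dag)$, contradicting $C(\eta^\dag) < C(\eta)$.

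The second alternative is the delicate one, and I expect it to be the main obstacle. The key observation is that $\eta_{\min}$ lies on the same ray out of $\eta^\dag$ through $\eta$, but strictly beyond $\eta$: writing $\phi(\beta) := C(\eta^\dag + \beta(\eta - \eta^\dag))$, one has $\eta^\dag + \beta(\eta - \eta^\dag) = \eta^\dag + \beta\beta^*(\eta_{\min} - \eta^\dag)$, so $\phi(1) = C(\eta)$ and $\phi(1/\beta^*) = C(\eta_{\min})$ with $1/\beta^* > 1$. The hypothesis gives $\phi'(1^+) > 0$, and $\phi$ is convex because $C$ is. The crucial convexity fact I would invoke is that for a convex function the forward difference quotient dominates the right derivative, i.e. $\phi(1+h) - \phi(1) \ge h\,\phi'(1^+)$ for every $h > 0$ (the quotient $(\phi(1+h)-\phi(1))/h$ is nondecreasing in $h$ with infimum $\phi'(1^+)$). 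Taking $h = 1/\beta^* - 1 > 0$ yields $C(\eta_{\min}) = \phi(1/\beta^*) > \phi(1) = C(\eta)$, contradicting the minimality of $\eta_{\min}$.

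Both alternatives therefore force a contradiction, so $L(\eta_{\min}) \le s$, as claimed. The only points needing care are the continuity of $\beta \mapsto L(\eta_\beta)$, which is immediate from the assumed continuity of $L$ and the convexity of $\mathcal{C}$, and the one-sided convexity inequality used in the second case; note that the existence of the right derivative $\phi'(1^+)$ is automatic for the convex scalar function $\phi$, and the argument nowhere requires the minimizer to be unique.
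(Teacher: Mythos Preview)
Your proof is correct and follows essentially the same route as the paper's: both use the intermediate value theorem along the segment from $\eta^\dag$ to a point with $L>s$ to locate a point with $L=s$, and then handle the two alternatives via convexity of $C$ (the paper's inequality $C(\tilde\eta)-C(\breve\eta)\ge(\breve\beta-1)\phi'(1^+)$ is exactly your difference-quotient bound). The only cosmetic difference is that the paper argues for an arbitrary $\tilde\eta$ with $L(\tilde\eta)>s$ rather than specifically for a presumed minimizer, and in the first alternative it deduces $C(\eta^\dag)<C(\tilde\eta)$ directly from convexity instead of invoking minimality; neither change affects the substance.
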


\begin{proof}
%First note that $\{\eta\in \mathcal{C}: L(\eta) \leq s\}$ is a closed
%set, so there is a minimizer of $C(\cdot)$ on this set.
Fix any $\tilde{\eta} \in \mathcal{C}$ with $L(\tilde{\eta}) > s$.
Consider the convex combination of $\eta^\dag$ and $\tilde{\eta}$ 
\[
\eta_\alpha = \alpha \tilde\eta + (1 - \alpha) \eta^\dag, \qquad 0
\leq \alpha \leq 1.
\] 
Define $f(\alpha) = L(\eta_\alpha)$. It is a continuous function of $\alpha$.
Since $f(0) = L (\eta^\dag)< s$ and $f(1) =L(\tilde\eta)> s$, by the
intermediate value theorem, there exists an
$\breve{\alpha} \in (0,1)$ such that $f(\breve{\alpha}) = s$. 
Denote $\breve{\eta}
= \breve{\alpha} \tilde\eta + (1 - \breve{\alpha}) \eta^\dag$.
Immediately $L(\breve{\eta}) = f(\breve{\alpha})$ = s. 

If \eqref{temp_1} holds, from the convexity of $C(\cdot)$, we have
\begin{equation*}
C(\eta^\dag) <   C(\breve{\eta}) \leq \breve{\alpha} C(\tilde\eta) + (1 - \breve{\alpha})
  C(\eta^\dag),
\end{equation*}
which implies 
\begin{equation}\label{eq:convexity1}
C(\eta^\dag) < C(\tilde \eta).
\end{equation}
On the other hand, we can write 
$\tilde\eta = \eta^\dag + \breve{\beta}(\breve\eta-\eta^\dag)$, where
$\breve{\beta} = \breve{\alpha}^{-1}>1$. 
If \eqref{temp_2} holds, then
\begin{equation}\label{eq:convexity2}
\begin{split}
C(\tilde\eta) - C(\breve\eta) 
& = C(\eta^\dag + \breve\beta (\breve\eta - \eta^\dag)) 
- C(\eta^\dag +  (\breve\eta - \eta^\dag)) \\
& \geq (\breve\beta-1)    \frac{\partial}{\partial \beta}\, C(\eta^\dag + \beta (\breve\eta -
   \eta^\dag) ) \bigg|_{\beta=1^+}>0.
\end{split}
\end{equation}
Both \eqref{eq:convexity1} and \eqref{eq:convexity2} imply that 
$\tilde\eta$ with $L(\tilde{\eta}) > s$ cannot be the minimizer of $C(\cdot)$.
\end{proof}

\begin{proof}[Proof of Theorem \ref{approx_err}] 
We assume $p \leq m+1$ without loss of generality, since we can replace $p$ by $p'
=p \wedge (m+1)$ otherwise. For $\eta^*_n$ as in Proposition \ref{schumaker},
we have that $\|\eta^*_n - \eta_0\| \leq C_1 \delta_n^p$ and $J_q(\eta^*_n) \leq C_3  {\delta_n^{2(p-q) \wedge 0}}$. Therefore
\begin{equation}\label{eq:schumaker}
\|\eta^*_n - \eta_0\| + \lambda_n^{1/2} J_q^{1/2}(\eta^*_n) \leq C_1 \delta_n^p +  C_3^{1/2}\lambda_n^{1/2} {\delta_n^{(p-q)\wedge 0}}. 
\end{equation}
In the following we will repeatedly use the inequality
\begin{equation}\label{eq:sq-ineq}
\frac{1}{2}(u+v)^2 \leq u^2 + v^2 \leq (u+v)^2, \qquad u,v>0. \end{equation}
to bound $(\delta_n^p + \lambda_n^{1/2} {\delta_n^{(p-q)\wedge 0}})^2$ and $ \delta_n^{2p} + \lambda_n  {\delta_n^{2(p-q)\wedge 0}}$ by each other.

We apply the Convexity Lemma (Lemma~\ref{lemma:convex}) to the convex functional 
\[
C(g) = -\Lambda(g) + \lambda_n J_q(g)
\]
and the continuous functional
\[
L(g) = \|g - \eta_n^*\| + \lambda_n^{1/2} J_q^{1/2}(g-\eta_n^*),
\]
both defined on $\mathcal{C} = \mathbb{G}_n$.
The continuity of $L(g)$ follows from the fact that
\[
|L(g_1) - L(g_2) | \leq \|g_1 - g_2\| + \lambda_n^{1/2}
J_q^{1/2}(g_1 - g_2).
\] When applying the lemma, take $s= a (\delta_n^p + \lambda_n^{1/2} {\delta_n^{(p-q)\wedge 0}})$,
where $a>0$ is a constant to be determined later.

Take $\eta^\dag=\eta^*_n$ in Lemma~\ref{lemma:convex}. We have $L(\eta^*_n) =0$.
We will show that 
\begin{equation}\label{eq:comp-C}
C(\eta^*_n) < C(g), \qquad g\in \mathbb{G}_n \text{ with } L(g) = s.
\end{equation}
Then, the Convexity Lemma implies that the minimizer $\bar\eta_n$ of $C(g)$ in $\mathbb{G}_n$
satisfies $L(\bar\eta_n) <s$. Consequently, by the triangle inequality and \eqref{eq:schumaker},
\begin{align*}
\|\bar{\eta}_n - \eta_0\| + \lambda_n^{1/2} J_q^{1/2}(\bar{\eta}_n) 
& \leq L(\bar\eta_n) + \|\eta_n^* - \eta_0\| + \lambda_n^{1/2} J_q^{1/2}(\eta_n^*) \\
& \leq a (\delta_n^p + \lambda_n^{1/2} {\delta_n^{(p-q)\wedge 0}}) + C_1 \delta_n^p + C_3^{1/2}\lambda_n^{1/2} {\delta_n^{(p-q)\wedge 0}}.
\end{align*}
By using \eqref{eq:sq-ineq}, we have that
\begin{equation}\label{bar_0_diff_ext}
 \|\bar{\eta}_n - \eta_0\|^2 + \lambda_n J_q(\bar{\eta}_n) = O(\delta_n^{2p} \vee \lambda_n {\delta_n^{2(p-q)\wedge 0}}),
\end{equation}
which is the desired result.

It remains to show \eqref{eq:comp-C}. 
By Proposition \ref{schumaker}, $\|\eta^*_n - \eta_0\|_\infty \leq C_2 \delta_n^{p-1/2}$.
For $g\in \bbG_n$ with $L(g)\leq s$, we have
\begin{equation}\label{etanear0_2}
\|g - \eta^*_n\|_\infty  \leq A_n \|g - \eta^*_n\| \leq A_n L(g)  \leq  A_n a (\delta_n^p + \lambda_n^{1/2} {\delta_n^{(p-q)\wedge 0}}),
\end{equation}
and therefore,  
\begin{equation}\label{etanear0_3}
\begin{split}
\|g-\eta_0\|_\infty &\leq \|g - \eta^*_n\|_\infty + \|\eta^*_n -\eta_0\|_\infty \\ 
&\leq  A_n a (\delta_n^p + \lambda_n^{1/2}  {\delta_n^{(p-q)\wedge 0}}) + C_3 \delta_n^{p-1/2}= o(1)
\end{split}
\end{equation}
(since $p> 1/2$). Thus, $\|g-\eta_0\|_\infty \leq B$ when $n$ is large, for $B$ in Condition~\ref{cond:exp-loglik}. 
Then, use Condition~\ref{cond:exp-loglik} to obtain
\begin{equation}\label{etanear0_4}
\begin{split}
C(g) + \Lambda(\eta_0)
& = -\Lambda(g) + \Lambda(\eta_0) + \lambda_n J_q(g) \\
& \geq M_1 \|g - \eta_0\|^2 + \lambda_n J_q(g)\\
&\geq \frac{1}{2} (M_1\wedge 1) \{\|g - \eta_0\| + \lambda_n^{1/2} J_q^{1/2}(g)\}^2,\\
\end{split}
\end{equation}
and
\begin{equation}\label{etastarnear0}
\begin{split}
C(\eta^*_n) + \Lambda(\eta_0)
& = -\Lambda(\eta^*_n) + \Lambda(\eta_0) + \lambda_n J_q(\eta^*_n) \\
& \leq M_2 \|\eta^*_n - \eta_0\|^2 + \lambda_n J_q(\eta^*_n)\\
& \leq (M_2\vee 1) \{\|\eta^*_n - \eta_0\| + \lambda_n^{1/2}
J_q^{1/2}(\eta^*_n)\}^2.
\end{split}
\end{equation}
For $g\in \bbG_n$ with $L(g) = s$, by the triangle inequality and \eqref{eq:schumaker}, we have that 
\begin{align*}
a (\delta_n^p + \lambda_n^{1/2} {\delta_n^{(p-q)\wedge 0}}) & = \|g-\eta_n^*\| + \lambda_n^{1/2} J_q^{1/2}(g - \eta_n^*)\\
& \leq \|g-\eta_0\| + \lambda_n^{1/2} J_q^{1/2}(g) +\|\eta_n^*-\eta_0\|   + \lambda_n^{1/2} J_q^{1/2}(\eta_n^*)\\
& \leq \|g-\eta_0\| + \lambda_n^{1/2} J_q^{1/2}(g) + C_1\delta_n^p + C_3^{1/2}\lambda_n^{1/2} {\delta_n^{(p-q)\wedge 0}}.
\end{align*}
Using the above inequality and \eqref{eq:schumaker} we obtain that, by taking $a$ large enough, the right hand
side of (\ref{etanear0_4}) is strictly greater than the right hand side of
(\ref{etastarnear0}). This proves \eqref{eq:comp-C}.

It follows from \eqref{etanear0_3} that, for any $g\in \bbG_n$ with $L(g)\leq s$, we have
\begin{equation}\label{etanear0_3b}
\|g\|_\infty \leq \|g - \eta_0\|_\infty +\|\eta_0\|_\infty 
< M \|\eta_0\|_\infty
\end{equation}
for large $n$. Since $L(\bar\eta_n) <s$, 
\eqref{etanear0_3b} implies that $\|\bar\eta_n\|_\infty \leq M\|\eta_0\|_\infty<\infty$. It follows again from \eqref{etanear0_3} that $\|\bar\eta_n - \eta_0\|_\infty = o(1)$. The proof is complete.
\end{proof}

\begin{proof}[Proof of Theorem~\ref{est_err}]
We apply the Convexity Lemma (Lemma~\ref{lemma:convex}) to the convex functional 
\[
C(g) = -\ell(g) + \lambda_n J_q(g)
\]
and the continuous functional
\[
L(g) = \|g - \bar\eta_n\| + \lambda_n^{1/2} J_q^{1/2}(g-\bar\eta_n),
\]
both defined on $\mathcal{C} = \mathbb{G}_n$.
We take 
\[
s^2 = a^2 \biggl(\frac{1}{n  \lambda_n^{1/(2q)}}\bigwedge \frac{1}{n
  \delta_n} \biggr)
\]
when applying this lemma,
where $a>0$ is a constant to be determined later.

Take $\eta^\dag = \bar\eta_n$ in Lemma~\ref{lemma:convex}. 
We have $L(\bar\eta_n) =0 < s$.
We will show that 
\begin{equation}\label{eq:comp-C2}
\frac{\partial}{\partial\alpha} C(\bar\eta_n + \alpha (g-\bar\eta_n)) \bigg|_{\alpha=1^+}>0, \qquad g\in \mathbb{G}_n \text{ with } L(g) = s.
\end{equation}
Then, the Convexity Lemma implies that the minimizer $\hat\eta_n$ of $C(g)$ in $\mathbb{G}_n$
satisfies $L(\hat\eta_n) \leq s$. 
Hence, 
\begin{equation}\label{bar_0_diff_ext2}
 \|\hat{\eta}_n - \bar\eta_n\|^2 + \lambda_n J_q(\hat{\eta}_n - \bar\eta_n) 
\leq
s^2 = a^2 \biggl(\frac{1}{n  \lambda_n^{1/(2q)}}\bigwedge \frac{1}{n
  \delta_n} \biggr)
\end{equation}
which is the desired result.

It remains to show \eqref{eq:comp-C2}.
Because $J_q(\cdot)$ is a quadratic functional, we have the expansion
\[
J_q(\bar\eta_n + \alpha (g-\bar\eta_n)) = J_q(\bar\eta_n) + 2\alpha J_q(\bar\eta_n, g-\bar\eta_n) + \alpha^2J_q(g-\bar\eta_n).
\]
This together with the definition of $C(\cdot)$ imply that
\[
\frac{\partial}{\partial\alpha} C(\bar\eta_n + \alpha (g-\bar\eta_n))
\bigg|_{\alpha=1^+}
= \mathrm{I} + \mathrm{II}
\]
where (using~\eqref{eq:gradient})
\[
\mathrm{I} = - \frac{d}{d\alpha} \ell(\bar\eta_n +\alpha(g-\bar\eta_n))
\bigg|_{\alpha=0^+} + 2\lambda_n J_q(\bar\eta_n, g-\bar\eta_n)
= - (E_n- E)\; \dot{l}[\bar\eta_n;g-\bar\eta_n],
\]
and
\[
\mathrm{II}=  -\frac{d}{d\alpha} \ell(\bar\eta_n+ \alpha(g-\bar\eta_n)) \bigg|_{\alpha=1^+} 
+ \frac{d}{d\alpha} \ell(\bar\eta_n + \alpha(g-\bar\eta_n)) \bigg|_{\alpha=0^+} 
+ 2 \lambda_n J_q(g-\bar\eta_n).
\]

Now consider $g\in \mathbb{G}_n$ with $L(g) \leq s$. 
By Condition \ref{l_12der} (i),
\begin{equation}\label{1st-der}
\begin{split}
| \mathrm{I} | & = \{\|g-\bar\eta_n\|^2 + \lambda_n J_q(g - \bar{\eta}_n) \}^{1/2}
O_P \biggl(\biggl(\frac{1}{n\lambda_n^{1/(2q)}} \bigwedge
\frac{1}{n\delta_n}\biggr)^{1/2}\biggr)\\
&\leq s \, O_P\biggl(\frac{s}{a}\biggr)  =O_P\biggl(\frac{s^2}{a}\biggr).
\end{split}
\end{equation}
On the other hand, by the definition of $A_n$,
\[
\|g - \bar{\eta}_n\|_\infty \leq A_n \|g - \bar{\eta}_n\| = A_n a
\biggl(\frac{1}{n\lambda_n^{1/(2q)}} \bigwedge
\frac{1}{n\delta_n}\biggr)^{1/2}= o(1).
\]
Thus, $\|g - \bar{\eta}_n\|_\infty \leq B$ for large $n$ where $B$ is
as in Condition \ref{l_12der} (ii). It then follows from this
condition that, for $g\in \bbG_n$ with $L(g)=s$,
\begin{equation}  \label{2nd-der}
\begin{split}
\mathrm{II} & \geq M \|g- \bar\eta_n\|^2 + 2 \lambda_n  J_q(g-\bar\eta_n)\\
& \geq \frac{1}{2} (M\wedge 2) \{\|g- \bar\eta_n\| + \lambda_n^{1/2}
J_q^{1/2}(g-\bar\eta_n)\}^2
= \frac{1}{2} (M\wedge 2) s^2.
\end{split}
\end{equation}
Therefore, by taking a sufficient large $a$,
\[
\mathrm{I} + \mathrm{II} \geq \frac{1}{2} (M\wedge 2) s^2- O_P\biggl(\frac{s^2}{a}\biggr)>0.
\]
Thus we have proved \eqref{eq:comp-C2}. This completes the proof of the theorem.
\end{proof}

\section{Useful lemmas for verifying the conditions of the master theorems}\label{sec:strategy}

This section develops three lemmas that provide sufficient conditions
for Conditions~$\ref{cond:exp-loglik}$ and $\ref{l_12der} (i) (ii)$, respectively.

\begin{lemma}~\label{lemma:exp-loglik}
Suppose $\|h_1\|_\infty \leq C$ for some constant $C>0$. If there are constant $B > 0$ and constants $M_1, M_2 > 0$ such that
 \begin{equation}\label{eq:2ndder-exp-loglik}
    -M_1\|h_2\|^2 \leq \frac{d^2}{d \alpha^2} \Lambda(h_1 +
    \alpha h_2) \leq -M_2 \|h_2\|^2, \quad 0 \leq \alpha \leq 1,
  \end{equation}
whenever $\|h_2\|_\infty \leq B$, then
Condition~$\ref{cond:exp-loglik}$ holds if $\|\eta_0\|\leq C$.
\end{lemma}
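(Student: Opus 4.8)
The plan is to reduce the functional inequality asserted in Condition~\ref{cond:exp-loglik} to a one-dimensional, second-order Taylor expansion of $\Lambda$ along the line segment from $\eta_0$ to $\eta_0+h$. Fix $h$ with $\|h\|_\infty \le B$ and set $\psi(\alpha) = \Lambda(\eta_0 + \alpha h)$ for $\alpha \in [0,1]$. Invoking the hypothesis \eqref{eq:2ndder-exp-loglik} with $h_1 = \eta_0$ and $h_2 = h$—admissible because $\|h\|_\infty \le B$ and, taking $h_1=\eta_0$, the required $L_\infty$ bound $\|\eta_0\|_\infty \le C$ holds (this is how I read the assumption $\|\eta_0\|\le C$)—shows that $\psi$ is twice differentiable on $[0,1]$ with
\[
-M_1\|h\|^2 \le \psi''(\alpha) \le -M_2\|h\|^2, \qquad 0 \le \alpha \le 1.
\]

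The first substantive step is to kill the linear term, i.e.\ to show $\psi'(0)=0$. This is the one place where the defining property of $\eta_0$ enters: since $\eta_0$ maximizes $\Lambda$ over the convex set of admissible functions and $\Lambda$ is differentiable there, first-order optimality forces $\frac{d}{d\alpha}\Lambda(\eta_0+\alpha h)\big|_{\alpha=0}=0$ for every admissible direction $h$. I expect this to be the only delicate point: one must argue that $\eta_0$ is an \emph{interior} maximizer, so that perturbing by both $+h$ and $-h$ is allowed, whence the two one-sided directional derivatives are each nonpositive and therefore both vanish. Everything after this is mechanical.

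With $\psi'(0)=0$ in hand, Taylor's theorem with integral remainder (which needs only that $\psi''$ exists and is bounded, so $\psi'$ is Lipschitz and the fundamental theorem of calculus applies) gives
\[
\Lambda(\eta_0+h)-\Lambda(\eta_0) = \psi(1)-\psi(0) = \int_0^1 (1-\alpha)\,\psi''(\alpha)\,d\alpha,
\]
and since $\int_0^1 (1-\alpha)\,d\alpha = \tfrac12$, substituting the two-sided bound on $\psi''$ yields
\[
-\tfrac{M_1}{2}\|h\|^2 \le \Lambda(\eta_0+h)-\Lambda(\eta_0) \le -\tfrac{M_2}{2}\|h\|^2 .
\]
This is exactly Condition~\ref{cond:exp-loglik}, with the same $B$ and with the positive constants $M_1/2$ and $M_2/2$ in place of $M_1,M_2$. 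Since Condition~\ref{cond:exp-loglik} only requires the existence of \emph{some} positive constants, the harmless factor $\tfrac12$ can be absorbed by relabeling, which completes the argument.
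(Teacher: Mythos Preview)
Your proof is correct and follows exactly the approach the paper sketches: a Taylor expansion of $\Lambda$ along the segment from $\eta_0$ to $\eta_0+h$, with the first-order term vanishing because $\eta_0$ maximizes $\Lambda$. The paper simply cites this as Lemma~A.1 of \cite{huang2001concave} and notes the proof is ``a Taylor expansion at the maximal point of the expected log-likelihood and noticing that the first order term is zero''; you have written out precisely that argument in full.
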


This is Lemma A.1 of \cite{huang2001concave}, which is proved easily by a
Taylor expansion at the maximal point of the expected log-likelihood
and noticing that the first order term is zero. As we will show
later in this paper that \eqref{eq:2ndder-exp-loglik}
can be verified easily in various contexts.

\begin{lemma}~\label{lemma:score}
If there exists a constant $M$ such that $\mathrm{Var}\{\dot{l}
[\bar\eta_n;h]\} \leq M $ for any $h$ satisfying $\|h\|^2 =1$, then
Condition~$\ref{l_12der} (i) $ holds.
\end{lemma}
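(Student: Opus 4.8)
The plan is to use the linearity of the directional derivative $\dot{l}[\bar\eta_n;\cdot]$ in its second argument (which holds whenever $l$ is differentiable, the situation in all the smooth contexts, and is exactly the ``exchange of differentiation and expectation'' already invoked in the text) and to control the linear functional $g \mapsto (E_n - E)\dot{l}[\bar\eta_n;g]$ by expanding $g$ in suitable orthonormal bases. The starting observation is that $(E_n - E)\dot{l}[\bar\eta_n;g]$ is a centered empirical average of i.i.d.\ terms, so for a fixed direction $h$ with $\|h\|^2 = 1$ the hypothesis gives $E\{|(E_n - E)\dot{l}[\bar\eta_n;h]|^2\} = n^{-1}\mathrm{Var}\{\dot{l}[\bar\eta_n;h]\} \le M/n$. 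I would establish the two halves of the minimum in Condition~\ref{l_12der}$(i)$ separately and then combine them, since a sequence that is simultaneously $O_P(a_n)$ and $O_P(b_n)$ is $O_P(a_n\wedge b_n)$.

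For the $1/(n\lambda_n^{1/(2q)})$ bound I would use the eigen-expansion of Section~\ref{sec:prelim}. Writing $g = \sum_\nu g_\nu \phi_\nu$ in the basis that simultaneously diagonalizes $V$ and $J_q$, so that $\|g\|^2 + \lambda_n J_q(g) = \sum_\nu (1 + \lambda_n\rho_\nu)g_\nu^2$, linearity gives $(E_n - E)\dot{l}[\bar\eta_n;g] = \sum_\nu g_\nu Z_\nu$ with $Z_\nu = (E_n - E)\dot{l}[\bar\eta_n;\phi_\nu]$, and a weighted Cauchy--Schwarz inequality yields, for every $g$,
\[
\frac{|(E_n - E)\dot{l}[\bar\eta_n;g]|^2}{\|g\|^2 + \lambda_n J_q(g)} \le \sum_\nu \frac{Z_\nu^2}{1 + \lambda_n\rho_\nu}.
\]
Since $\|\phi_\nu\|^2 = V(\phi_\nu,\phi_\nu) = 1$, the variance hypothesis gives $E(Z_\nu^2) \le M/n$, so taking expectations termwise and invoking Proposition~\ref{gu1} yields $E\{\sum_\nu Z_\nu^2/(1+\lambda_n\rho_\nu)\} \le (M/n)\sum_\nu (1+\lambda_n\rho_\nu)^{-1} = O(1/(n\lambda_n^{1/(2q)}))$; Markov's inequality then gives the desired $O_P$ statement.

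For the $1/(n\delta_n)$ bound I would instead drop the penalty term from the denominator (it only helps) and work inside the finite-dimensional space $\bbG_n$ directly. Choosing a $V$-orthonormal basis $\{\psi_j\}_{j=1}^{N_n}$ of $\bbG_n$ and expanding $g = \sum_j g_j \psi_j$, the same Cauchy--Schwarz argument gives $\sup_{g\in\bbG_n} |(E_n-E)\dot{l}[\bar\eta_n;g]|^2/\|g\|^2 \le \sum_{j=1}^{N_n} Y_j^2$ with $Y_j = (E_n-E)\dot{l}[\bar\eta_n;\psi_j]$. Again $\|\psi_j\|^2 = 1$ gives $E(Y_j^2)\le M/n$, so $E\{\sum_j Y_j^2\} \le MN_n/n \asymp 1/(n\delta_n)$ by $N_n \asymp \delta_n^{-1}$, and Markov delivers the $O_P$ bound. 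Combining the two pieces proves Condition~\ref{l_12der}$(i)$.

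I expect the main obstacle to be the rigor of the eigen-expansion step: justifying that the bounded linear functional $(E_n - E)\dot{l}[\bar\eta_n;\cdot]$ commutes with the infinite eigen-expansion of $g\in\bbG_n$ (which converges in the $V+J_q$ norm, so one needs continuity of the functional in that norm), and that the directional derivative is genuinely linear in $g$ rather than merely positively homogeneous. This is precisely where differentiability of $l$ is essential, and where a non-differentiable case such as quantile regression would require a separate treatment.
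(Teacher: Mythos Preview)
Your proposal is correct and follows essentially the same approach as the paper: both bounds are obtained by expanding $g$ in an appropriate orthonormal system (the eigenbasis $\{\phi_\nu\}$ for the $1/(n\lambda_n^{1/(2q)})$ bound, a $V$-orthonormal basis of $\bbG_n$ for the $1/(n\delta_n)$ bound), applying Cauchy--Schwarz, bounding each summand's expectation by $M/n$ via the variance hypothesis, summing (using Proposition~\ref{gu1} in the eigen case and $N_n\asymp\delta_n^{-1}$ in the finite-dimensional case), and concluding via Markov. The paper presents the two bounds in the opposite order but the arguments are otherwise identical; your closing caveats about linearity and the infinite expansion are reasonable but are not addressed in the paper's proof either.
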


This lemma is a generalization of Lemma~A.2 of
\cite{huang2001concave}, which gives a similar result for polynomial
spline estimators.

\begin{proof}[Proof of Lemma~\ref{lemma:score}.]
Consider an orthonormal basis $\{\psi_k, k=1, \dots, N_n\}$ of
$\bbG_n$.
We have $N_n \asymp \delta_n^{-1}$. Any $g\in \bbG_n$ can
be represented by this basis as $g=\sum_k g_k \psi_k$, where $g_k =
\langle g, \psi_k\rangle$. It follows that
$\dot{l}[\bar\eta_n; g] = \sum_k g_k  \dot{l}[\bar\eta_n; \psi_k].$ 
By the Cauchy--Schwarz inequality and $\|g\|^2 =\sum_k g_k^2$,
\begin{equation}\label{eq:gradient-basis}
\frac{|(E_n-E) \dot{l}[\bar\eta_n; g]|^2}{\|g\|^2+ \lambda_n J_q(g)}
\leq \frac{|(E_n-E) \dot{l}[\bar\eta_n; g]|^2}{\|g\|^2}
\leq  \sum_k \{ (E_n-E) \dot{l}[\bar\eta_n; \psi_k]\}^2
\end{equation}
Since $\|\psi_k\|=1$, by the assumption of the lemma, the expectation of the right hand side
of the above is bounded by $\sum_k \{M/n\} \leq M/(n\delta_n)$.
On the other hand, take the eigen decomposition $g = \sum_\nu g_\nu \phi_\nu$. We
have $\dot{l}[\bar{\eta}_n; g] = \sum_\nu g_\nu\dot{l} [\bar{\eta}_n; \phi_\nu]$.
By the Cauchy--Schwartz inequality and
\[
\|g\|^2 + \lambda_n J_q(g) = \sum_\nu g_\nu^2(1+\lambda_n \rho_\nu),
\]
we have that
\begin{equation} \label{eq:gradient-penalty}
\frac{|(E_n-E) \dot{l}[\bar{\eta}_n; g] |^2}{\|g\|^2 + \lambda_n J_q(g)}
 \leq  \sum_\nu\frac{\{(E_n-E) \dot{l} [\bar{\eta}_n; \phi_\nu] \}^2}{1+\lambda_n \rho_\nu}.
 \end{equation}
Since $\|\phi_\nu\|=1$, by the assumption of this lemma and
Proposition~\ref{gu1}, the expectation of the right hand side of the above is bounded by
\[
\frac{M}n \sum_\nu\frac{1}{1+\lambda_n \rho_\nu} = O\biggl(\frac{1}{n\lambda_n^{1/(2q)}}\biggr).
\]
The conclusion now follows from
\eqref{eq:gradient-basis}--\eqref{eq:gradient-penalty} and the Markov inequality.
\end{proof}

\begin{lemma}~\label{lemma:hessian}
The following provides a sufficient condition for
Condition~$\ref{l_12der} (ii)$:\\
$(i)$ $\|\bar\eta_n\|_\infty =O(1)$;\\
$(ii)$ For $g\in \bbG_n$, $\ell(\bar\eta_n+ \alpha g)$ as a function of $\alpha$ is
twice continuously differentiable; moreover, there are constants $B >
0$ and $M > 0$ such that 
\begin{equation*}
  \frac{d^2}{d\alpha^2}\ell(\bar\eta_n+ \alpha g) \leq -M \|g\|^2,
  \hspace{.5cm} 0 \leq \alpha \leq 1,
\end{equation*}
holds for $g \in \bbG_n$ with $\|g\|_\infty \leq B$, with probability tending to one as $n \to \infty$.
\end{lemma}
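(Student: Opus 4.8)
The plan is to recognize that Condition~\ref{l_12der}$(ii)$ asks for a bound on the \emph{increment} of the first directional derivative of $\ell(\bar\eta_n+\alpha g)$ between $\alpha=0$ and $\alpha=1$, whereas assumption $(ii)$ of Lemma~\ref{lemma:hessian} supplies a pointwise bound on the \emph{second} derivative; the two are linked by the fundamental theorem of calculus. The entire argument is therefore deterministic once we restrict to the event on which assumption $(ii)$ holds. Since that event has probability tending to one, the conclusion will automatically carry the ``with probability tending to one'' qualifier demanded by Condition~\ref{l_12der}$(ii)$. Throughout, the constants $B$ and $M$ are taken to be those furnished by assumption $(ii)$.

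Fix $g\in\bbG_n$ with $\|g\|_\infty\le B$ and set $\phi(\alpha)=\ell(\bar\eta_n+\alpha g)$. Assumption $(i)$ gives $\|\bar\eta_n+\alpha g\|_\infty\le\|\bar\eta_n\|_\infty+B=O(1)$ for $\alpha\in[0,1]$, so the whole segment remains in the region where $\ell$ is finite and, by assumption $(ii)$, where $\phi$ is twice continuously differentiable with $\phi''(\alpha)\le -M\|g\|^2$ on $[0,1]$. In particular $\phi$ is $C^1$, so the one-sided directional derivatives entering Condition~\ref{l_12der}$(ii)$ coincide with the ordinary derivatives, namely $\frac{d}{d\alpha}\ell(\bar\eta_n+\alpha g)|_{\alpha=0^+}=\phi'(0)$ and $\frac{d}{d\alpha}\ell(\bar\eta_n+\alpha g)|_{\alpha=1^+}=\phi'(1)$.

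With these identifications, the result follows from integrating the second-derivative bound:
\[
\phi'(1)-\phi'(0)=\int_0^1\phi''(\alpha)\,d\alpha\le\int_0^1\bigl(-M\|g\|^2\bigr)\,d\alpha=-M\|g\|^2,
\]
which is precisely the inequality asserted in Condition~\ref{l_12der}$(ii)$. Since the constants $B,M$ in assumption $(ii)$ do not depend on $g$, the bound holds uniformly over all $g\in\bbG_n$ with $\|g\|_\infty\le B$, as required.

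The only delicate point, and the step I would scrutinize most, is the identification of the right derivative at $\alpha=1^+$ with $\phi'(1)$: assumption $(ii)$ controls $\phi''$ only on $[0,1]$, while the superscript $1^+$ refers to a right derivative reaching just beyond $\alpha=1$. If the twice-differentiability in $(ii)$ is understood to hold on an open interval containing $[0,1]$, the identification is immediate. Otherwise one invokes the concavity of $\alpha\mapsto\ell(\bar\eta_n+\alpha g)$ (part $(i)$ of the definition of a concave extended linear model), whose right derivative is non-increasing, to obtain $\phi'(1^+)\le\phi'(1^-)=\phi'(1)$; this only strengthens the inequality and so does no harm. The handling of the probability statement and the uniformity over $g$ are already absorbed into the phrasing of assumption $(ii)$ and require no additional work.
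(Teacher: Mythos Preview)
Your proof is correct and follows the same approach as the paper: the fundamental theorem of calculus applied to $\phi'$, integrating the second-derivative bound over $[0,1]$. The paper's proof consists of essentially one line---the integral identity $\phi'(1)-\phi'(0)=\int_0^1\phi''(\alpha)\,d\alpha$---and declares the rest straightforward; your discussion of the $1^+$ versus $1$ subtlety is a careful point the paper does not address explicitly, but your resolution via concavity is sound and the conclusion is unaffected.
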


When using this lemma, we only need to verify Part (ii) of the
condition, since Part (i) is a consequence of Theorem~\ref{approx_err}.
Part (ii) of the condition has been used in~\cite{huang2001concave}
when studying rates of convergence of polynomial spline estimators.

\begin{proof}[Proof of Lemma~\ref{lemma:hessian}.]
Since
\[
   \frac{d}{d\alpha} \ell(\bar{\eta}_n + \alpha
   g)\bigg|_{\alpha = 1} - \frac{d}{d\alpha} \ell(\bar{\eta}_n + \alpha
   g)\bigg|_{\alpha = 0} 
= \int_0^1   \frac{d^2}{d\alpha^2}\ell(\bar\eta_n + \alpha g) \,
d\alpha,
\]
the result is straightforward.
\end{proof}

\section{Application I: regression}\label{sec:regression}
Consider the problem of estimating the conditional mean function $\eta_0(x) = E (Y| X=x)$ based on an i.i.d.\ sample of $\mathbf{W}= (X, Y)$, denoted as $\mathbf{W}_i = (X_i,Y_i), i=1, \dots, n$. 
For a candidate function $h$ of the unknown function $\eta_0$, define the ``log-likelihood'' functional as
\[
\ell(h; \mathbf{W}_1, \ldots, \mathbf{W}_n) = - \frac{1}{n} \sum_{i=1}^n \{Y_i - h(X_i)\}^2.
\]
This can be interpreted as a (conditional) log-likelihood (up to a scale factor) when the conditional distribution
of $y_i$ given $x_i$ is Gaussian or a pseudo log-likelihood without the distribution assumption.

We verify conditions used in the master theorems under the following primitive assumptions. 
\begin{assumption*}[\textbf{REG}] 

$(i)$  The function $\eta_0$ is bounded on $\mathcal{U}$.

$(ii)$  There is a constant $D>0$ such that $\mathrm{Var}(Y|X=x) \leq D $ for all $x$. 

$(iii)$ The distribution of $X$ is absolutely continuous and its density function is bounded away from zero and infinity on $\mathcal{U}$, that is, there exist constants $C_1, C_2 >0$ such that 
\begin{equation*}
    C_1 \leq f_X(x) \leq C_2, \quad \text{for } x \in \mathcal{U}.
\end{equation*}
\end{assumption*}

The expected log-likelihood is
\[
\Lambda(\eta) = - E [\{Y_i - h(X_i)\}^2].
\]
Define the empirical and theoretical norms as in Section~\ref{sec:splines} with the weight function being $w(x) \equiv 1$. It is easy to see that 
\begin{equation*}
\frac{d^2}{d\alpha^2}\Lambda(h_1+\alpha h_2) = - 2 \|h_2 \|^2,
\end{equation*}
and thus \eqref{eq:2ndder-exp-loglik} holds with $M_1= M_2 = 2$.  Condition~$\ref{cond:exp-loglik}$ then follows from Lemma~\ref{lemma:exp-loglik}.

Note that
\[
\dot{l}[\bar\eta_n;h](\mathbf{W}_1) = \{\bar{\eta}_n(X_1) -  Y_1\} h(X_1).
\]
Since we apply Theorem~\ref{est_err} after we apply Theorem~\ref{approx_err}, we can use the conclusion of Theorem~\ref{approx_err} and assume that $\|\bar\eta_n\|_\infty \leq M$ for some constant $M>0$ when $n$ is large enough. Suppose $\|h\|^2=1$.  
Let $\epsilon_1 = Y_1 - \eta_0(X_1)$. We have that
\begin{align*}
   \text{Var}[\{\bar{\eta}_n(X_1) - Y_1\} h(X_1)] &\leq E[\{\bar{\eta}_n(X_1) - Y_1\}^2 h(X_1)^2] \\
&= E[\{\bar{\eta}_n(X_1) - \eta_0(X_1)\}^2 h(X_1)^2] + E[\epsilon_1^2 h(X_1)^2]\\
& \leq \|\bar{\eta}_n - \eta_0\|_\infty^2 + D
\leq (M+ \|\eta_0\|_\infty)^2 + D,
\end{align*}
which is the condition needed for applying Lemma~\ref{lemma:score}.
Condition~$\ref{l_12der}(i)$ then follows from Lemma~\ref{lemma:score}. 

Finally,
\begin{equation*}
  \frac{d^2}{d\alpha^2}\ell(\bar\eta_n + \alpha\, g; \mathbf{W}_1, \ldots, \mathbf{W}_n) = - \frac{2}{n}\sum_{i = 1}^n  g^2(X_i) = - 2 \|g\|_n^2.
\end{equation*}
Proposition~\ref{equiv_norm} implies that Part (ii) of the condition in Lemma~\ref{lemma:hessian} holds if $\lim_n N_n \log(n) /n = 0$, and thus Condition~\ref{l_12der}($ii$) holds according to this lemma.

Verification of conditions is complete.

\section{Application II: generalized regression}\label{sec:gr}
Our setup of generalized regression follows \citet{stone1986dimensionality, stone1994use} and \citet{huang1998functional}. In a generalized regression model, the conditional distribution of $Y$ given $X$ is characterized by an exponential family of distributions
\begin{equation}
  \label{density_gm}
  P(Y \in dy|X = x) = \exp \{B(\eta_0(x))y - C(\eta_0(x))\} \Psi(dy),
\end{equation}
where $\Psi(\cdot)$ is a nonzero measure on $\mathbb{R}$ that is not concentrated on a single point, and 
$C(\eta) = \log \int_{\mathbb{R}} \exp \{B(\eta) y\}\, \Psi(dy)$ is a well-defined normalizing constant for each $\eta$ in an open subinterval $\mathcal{I}$ of $\mathbb{R}$. Define $A(\eta) = C'(\eta)/B'(\eta)$ if the derivatives exist. The standard theory of exponential family of distributions gives that $E(Y|X=x) = A(\eta_0(x))$. 

The goal is to estimate the unknown function $\eta_0$ based on an $i.i.d.$ sample of $(X,Y)$, denoted as $\mathbf{W}_1=(X_1,Y_1), \ldots, \mathbf{W}_n=(X_n,Y_n)$. The scaled (conditional) log-likelihood at a candidate function $h$ is given by
\begin{equation*}
  \ell(h;\mathbf{W}_1, \ldots, \mathbf{W}_n) = \frac{1}{n}\sum_{i=1}^n \{B(h(X_i))Y_i - C(h(X_i))\},
\end{equation*}
and its expectation is
\begin{equation*}
  \Lambda(h) = E\{B(h(X_1))A(\eta_0(X_1)) - C(h(X_1))\}.
\end{equation*}
Define the empirical and theoretical norms as in Section~\ref{sec:splines} with the weight function being $w(x) \equiv 1$.

We verify conditions used in the master theorems under the following primitive assumptions.

\begin{assumption*}[\textbf{GR}]\label{con_gr}

$(i)$ $B(\cdot)$ is twice continuously differentiable and its first derivative is strictly positive on $\mathcal{I}$. 

$(ii)$ There is a subinterval $S$ of $\bbR$ such that $\Psi$ is concentrated on $S$ and
\begin{equation}\label{eq:gr1}
    B^{''}(\xi) \, y - C^{''}(\xi) < 0, \qquad y \in \mathring{S}, \xi \in \mathcal{I}
\end{equation}
where $\mathring{S}$ is the interior of $S$. If $S$ is bounded, \eqref{eq:gr1} holds for at least one of its endpoints.

$(iii)$ $P(Y\in S) =1$ and $E(Y|X=x) = A(\eta_0(x))$ for $x \in \mathcal{U}$.

$(iv)$ There is a compact subinterval $\mathcal{K}_0$ of $\mathcal{I}$ such that $\mathrm{range}(\eta_0)\subset \mathcal{K}_0$.

$(v)$ There is a constant $D>0$ such that $\mathrm{Var}(Y|X=x) \leq D $ for all $x$. 

$(vi)$ The distribution of $X$ is absolutely continuous and its density function is bounded away from zero and infinity on $\mathcal{U}$, that is, there exist constants $C_1, C_2 >0$ such that 
\[
    C_1 \leq f_X(x) \leq C_2, \quad \text{for } x \in \mathcal{U}. 
\]
\end{assumption*}

The same set of assumptions was used is \cite{huang1998functional}, where one can find more detailed discussions.  In particular, Assumptions \textbf{GR}(i)(ii) are satisfied by many familiar exponential families of distributions, including Normal, Binomial-probit, Binomial-logit, Poisson, gamma, geometric and negative binomial distribution; see Stone (1986). By relaxing the restriction that $\mathcal{I}=\mathbb{R}$, the identity link is allowed for Poisson regression and Binomial regression. It is important to point out that using this set of assumptions, the conditional distribution of $Y$ given $X=x$ does not necessarily belong to the exponential family, we only need that the conditional mean of $Y$ given $X=x$ is $A(\eta_0(x))$, as stated in \textbf{GR}(iii). As explained in \cite{huang1998functional}, this means that $\eta_0(\cdot)$ maximizes the expected log-likelihood functional $\Lambda(h)$. 

Luckily, \cite{huang1998functional} has already verified for us all the conditions used in our master theorems under the above assumptions. In particular, Lemma~4.1 of \cite{huang1998functional} verified Condition~\ref{cond:exp-loglik}; Proof of Claim 2 given on page 68 of \cite{huang1998functional} verified the condition in our Lemma~\ref{lemma:score} and thus verified Condition~$\ref{l_12der}(i)$; Lemma~4.3 of \cite{huang1998functional} verified Part (ii) of the condition in our Lemma~\ref{lemma:hessian} and thus verified Condition~$\ref{l_12der}(ii)$.

\section{Application III: probability density estimation}\label{prob_density}
Suppose $X$ is a random variable defined on a bounded interval $\mathcal{U}$ and has a density function $f_0(x)$. The goal is to estimate the unknown function $f_0(x)$ based on an i.i.d.\ sample of $X$, denoted as $X_i, i = 1, \ldots, n$. One difficulty for density estimation using penalized splines is that the density estimator has to satisfy two intrinsic constraints that $f_0$ satisfies, namely, the positivity constraint that $f_0 \geq 0$ and the unity constraint that $\int_{\mathcal{U}}f_0(x) \, dx = 1$. 
Assuming $f_0(x) > 0$ on $\mathcal{U}$, by making the transform
$f_0(\cdot) = \exp \eta_0(\cdot)/\int_{\mathcal{U}}\exp \eta_0(x) \,
dx$ we convert the problem to the estimation of $\eta_0$, which is
free of the two constraints on $f_0$. However, this transformation
creates an identifiability problem, that is, $\eta_0 +c$ and $\eta_0$
give the same density function for any constant $c$. To fix this problem, we require that $\int_{\mathcal{U}} \eta_0(x)\, dx=0$, which ensures a one-to-one correspondence between $f_0$ and $\eta_0$. To define a penalized spline estimator of $\eta_0$, we need to slightly modify our framework by restricting our attention to a subspace of $\bbG_n$,  $\bbG_{n1} = \{g \in \bbG_n: \int_{\mathcal{U}} g(x) \, dx = 0\}.$

We have a concave extended linear model with $\mathbf{W} = X$. The scaled log-likelihood at a candidate function $h$ based on the sampled data is 
\begin{equation*}
\ell(h;\mathbf{W}_1, \ldots, \mathbf{W}_n) = \frac{1}{n} \sum_{i=1}^n  \biggl(h(x_i) -
  \log \int_{\mathcal{U}}\exp h(x) \, dx \biggr). 
\end{equation*}
The expected log-likelihood is 
\begin{equation*}
  \Lambda(h) = E\{h(X)\} - \log \int_{\mathcal{U}}\exp h(x) \, dx.
\end{equation*}

We verify conditions used in the master theorems under the following primitive assumptions.
We make the additional assumption $\int_{\mathcal{U}}h(x)\, dx =0$ when verifying Condition~\ref{cond:exp-loglik} and replace $\mathbb{G}_n$ by $\mathbb{G}_{n1}$ when verifying Condition~\ref{l_12der}.

\begin{assumption*}[\textbf{DEN}]\label{con_den}
The density function $f$ is bounded away from zero and infinity on $\mathcal{U}$, or equivalently, $\eta_0$ is bounded on $\mathcal{U}$.
\end{assumption*}

Let $U$ be a random variable that has a uniform distribution on $\mathcal{U}$. Under the above assumption, we have that, for $h$ satisfying $\int_{\mathcal{U}}h(x)\, dx =0$,
\begin{equation}\label{eq:den-equiv1}
\begin{split}
E\{h^2(U)\} 
= \mathrm{Var}\{h(U)\} & = \inf_c E[\{h(U)-c\}^2]\\
& \asymp \inf_c E_{\eta_0}[\{h(X)-c\}^2] = \mathrm{Var}_{\eta_0}\{h(X)\},
\end{split}
\end{equation}
where the subscript $\eta_0$ emphasizes the fact that the distribution of $X$ is determined by $\eta_0$.
Therefore,
\begin{equation}\label{eq:den-equiv2}
E_{\eta_0}\{h^2(X)\} \lesssim E\{h^2(U)\} \asymp \mathrm{Var}_{\eta_0}\{h(X)\} \leq E_{\eta_0}\{h^2(X)\}.
\end{equation}

Define the empirical and theoretical norms as in Section~\ref{sec:splines} with the weight function being $w(x) \equiv 1$.
Under Assumption~(\textbf{DEN}), the theoretical norm $\|h\| $ is equivalent to $\|h\|_2$, the $L_2$ norm with respect to the Lebesgue measure. It is easy to see that 
\begin{equation*}
\frac{d^2}{d\alpha^2}\Lambda(h_1+\alpha h_2) = -  \mathrm{Var}\{h_2(X_\alpha)\},
\end{equation*}
where $X_\alpha$ has the density $f_{X_\alpha}(x) =
\exp h_\alpha(x)/\int_{\mathcal{U}}\exp h_\alpha(x) \, dx$ and
$h_\alpha = h_1 + \alpha \, h_2 $, $0\leq \alpha\leq 1$. 
For $B, C>0$, if $\|h_1\|_\infty \leq C$, $\|h_2\|_\infty \leq B$, then $\|h_\alpha\|_\infty \leq B+C$ and therefore there are constants $M_1, M_2 >0$ such that $M_2/|\mathcal{U}| |\leq f_{X_\alpha}(x) \leq M_1/|\mathcal{U}|$.
Using the same argument for proving \eqref{eq:den-equiv1}, we obtain that 
\[
M_2\mathrm{Var}\{h_2(U) \}  \leq \mathrm{Var}\{h_2(X_\alpha)\} 
\leq M_1\mathrm{Var}\{h_2(U)\}, 
\]
where $U$ has a uniform distribution on $\mathcal{U}$. Since $\mathrm{Var}\{h_2(U)\}$ is equivalent to $\|h_2\|_2^2$ and also $\|h_2\|^2$ when $h_2$ satisfies $\int_{\mathcal{U}} h_2(x)\,dx =0$, \eqref{eq:2ndder-exp-loglik} holds.  Condition~$\ref{cond:exp-loglik}$ then follows from Lemma~\ref{lemma:exp-loglik}.

To verify Condition \ref{l_12der} ($i$), note that
\begin{equation}\label{1st-der_1_den}
 \dot{l}[\bar\eta_n; h](\mbf{W}_1) = h(X_1) -  E_{\bar\eta_n} \{h(X)\},
\end{equation}
where the subscript $\bar\eta_n$ indicates that the expectation is taken as if the distribution of $X$ is determined by $\bar\eta_n$. It follows that
\[
\text{Var}\{\dot{l}[\bar\eta_n; h](\mbf{W}_1)\}= \text{Var}\{h(X_1)\} \leq \|h\|^2,
\]
indicating that the condition in our Lemma~\ref{lemma:score}
holds. (The restriction $\int_{\mathcal{U}}h(x)\, dx =0$ is taken care of by noticing that the constant function is the eigenfunction corresponds to the zero eigenvalue.) Condition~$\ref{l_12der}(i)$ follows from Lemma~\ref{lemma:score}.

Finally, because
\begin{equation}
  \label{2nd-der_1_den}
  \frac{d^2}{d\alpha^2} \ell(\bar\eta_n + \alpha g; \mathbf{W}_1, \ldots, \mathbf{W}_n) = 
\frac{d^2}{d\alpha^2}\Lambda(\bar\eta_n +\alpha g),
\end{equation}
the verification of Condition~$\ref{cond:exp-loglik}$ implies Part (ii) of the condition in our Lemma~\ref{lemma:hessian},
which in turn implies Condition~$\ref{l_12der}(ii)$ using the lemma.

\section{Application IV: counting process regression}\label{sec:countingp}

The counting process regression provides a general framework for
survival analysis with censored data \citep{andersen1993}. Here we
adopt the setup used in Section~3 of \cite{huang2001concave}. Let
$\mathcal{T} = [0,\tau]$ for some $\tau>0$. Suppose
$(\Omega,\mathcal{F},P)$ is a complete probability space and
$\{\mathcal{F}_t : t \in \mathcal{T}\}$ is a filtration satisfying the
``usual conditions,'' that is, $\mathcal{F}_t\subset \mathcal{F}$ is a
family of right-continuous, increasing $\sigma$-algebras and
$\mathcal{F}_0$ contains the $P$-null sets of $\mathcal{F}$. Let
$\{N(t): t \in \mathcal{T} \}$ be an adapted \citep{andersen1993} counting process with intensity
\begin{equation}\label{intensity_cp}
E[N(dt)|\mathcal{F}_{t-}] = Y(t) \exp\eta_0({X}(t))\,dt,
\end{equation}
where $Y(t)$ is a $\{0,1\}$-valued, predictable process, indicating
the times at which the process $N(t)$ is under observation, and $X(t)$
is an $\mathcal{U}$-valued, predictable covariate process. Our goal
is to estimate the log-hazard function $\eta_0$ based on an i.i.d.\
sample of $\mathbf{W} =  \{(N(t), Y(t), X(t)): t\in \mathcal{T}\}$, denoted as
$\mathbf{W}_i = \{(N_i(t), Y_i(t), X_i(t)): t\in \mathcal{T}\}, 1 \leq i \leq n$. 

The marker dependent hazard model~\citep{nielsen1995kernel} of hazard regression with right-censored survival data is a special case of this setup.  Specifically, one observes
$(T \wedge C, I(T\leq C))$,  where $T$ is the survival time of a subject and $C$ is the censoring time. (To avoid notational confusion, we do not use $C$ to denote a constant throughout this section.) Suppose $T$ and $C$ are conditional independent given the process $X(t)$, and the conditional hazard of $T$ given 
$\{X(s), s\leq t\}$ is $\exp\eta_0({X}(t))$. Let $N(t) = I(T \leq C \wedge t)$ be the counting process with a single jump at the survival time $T$ if uncensored. Then $N(t)$ has the intensity given by \eqref{intensity_cp}, with $Y(t) = I(T\wedge C \geq t)$ being the indicator that the subject is observed to be at risk at time $t$. 

This is a concave extended linear model. The scaled log-likelihood for a candidate function $h$ of $\eta_0$ is
\begin{equation*}
  \ell(h; \mathbf{W}_1, \ldots, \mathbf{W}_n) = \frac{1}{n} \sum_{i=1}^n
  \biggl(\int_{\mathcal{T}}h({X}_i(t)) N_i(dt) -
  \int_{\mathcal{T}}Y_i(t)\exp h({X}_i(t))\, dt\biggr).
\end{equation*}
The expected log-likelihood is 
\begin{equation*}
  \Lambda(h) = E\biggl(\int_{\mathcal{T}}h({X}(t))N(dt) -
  \int_{\mathcal{T}}Y(t)\exp h({X}(t))\,dt\biggr).
\end{equation*}
For the marker dependent hazard model, the above log-likelihood reduces to the usual form
\begin{equation*}
  \ell(h) = \frac{1}{n} \sum_i \biggl(h(X(T_i)) I\{T_i \leq C\} - \int_0^{T_i \wedge C} \exp h(X_i(t))dt \biggr),
\end{equation*}
and similarly for the expected log-likelihood.

We verify conditions used in the master theorems under the following primitive assumptions.

\begin{assumption*}[\textbf{CP}]\label{con_cp}
$(i)$  The function $\eta_0$ is bounded on $\mathcal{U}$.

$(ii)$  For fixed $t \in \mathcal{T}$, the Radon-Nikodym derivative of the measure $P(Y(t)=1, X(t) \in
  \cdot)$ w.r.t.\ the Lebesgue measure on $\mathcal{U}$ exists and is denoted as
$f_{Y(t)=1,X(t)}(t,x)$. As a function of $(t,x)$, $f_{Y(t)=1,X(t)}(t,x)$ is bounded away from 0 and infinity uniformly in $t \in \mathcal{T}$ and $x \in \mathcal{U}$. 
\end{assumption*}

Define the empirical inner product and corresponding squared norm by 
\[
\langle h_1,h_2 \rangle_n = E_n \int_{\mathcal{T}} Y(t)h_1(X(t)) \allowbreak h_2(X(t)) \, dt
\] 
and $\|h_1\|_n^2 = \langle h_1,h_1 \rangle_n$.
Define the theoretical inner product and the corresponding squared norm 
by
\[
\langle h_1,h_2 \rangle = E\int_{\mathcal{T}} Y(t)h_1({X}(t))h_2({X}(t))) \,dt
\]
and $\|h_1\|^2 = \langle h_1,h_1 \rangle$.
Under Assumption~(\textbf{CP})(ii), the theoretical inner and norm have the forms generally given 
in Section~\ref{sec:splines} with a specific weight function that is bounded away from 0 and infinity. In fact,
\[
\langle h_1,h_2 \rangle = \int_{\mathcal{U}} h_1(x)h_2(x) w_{cp}(x)\, dx.
\] 
for $w_{cp}(x) = \int_{\mathcal{T}} f_{Y(t)=1,{X}(t)}(t,{x}) \,dt$.  The corresponding theoretical norm $\|h\| $ is equivalent to $\|h\|_2$, the $L_2$-norm w.r.t.\ the Lebesgue measure.
Under Assumption~(\textbf{CP})(ii), it is easy to see that 
\begin{equation*}
\begin{split}
\frac{d^2}{d\alpha^2}\Lambda(h_1+\alpha h_2) 
& = -  E\biggl( \int_{\mathcal{T}}Y_i(t) h^2_2(X_i(t)) \exp h_1(X_i(t))\, dt\biggr)\\
& = - \int_{\mathcal{T}} h_2^2(x) \exp h_1(x) w_{cp}(x)\, dx.
\end{split}
\end{equation*}
If $\|h_1\|_\infty \leq C$, the above quantity is bounded above and below by a constant multiple of $\|h_2\|^2_2$, and also of $\|h_2\|^2$.  This indicates that \eqref{eq:2ndder-exp-loglik} holds.  Condition~$\ref{cond:exp-loglik}$ then follows from Lemma~\ref{lemma:exp-loglik}.

Note that
\begin{equation*}  
\dot{l} [\bar\eta_n; h] (\mbf{W}_1) = \int_{\mathcal{T}} h(X_1(t))N_1(dt) -
  \int_{\mathcal{T}} Y_1(t) \exp[\bar{\eta}_n\{X_1(t)\}] h(X_1(t)) \,dt.
\end{equation*}
Appendix B of \cite{huang2001concave} showed that
\[
\mathrm{Var} \biggl( \int_{\mathcal{T}} h(X_1(t)) N_1(dt)\biggr) \leq M_1 \|h\|^2.
\]
Moreover, if $\|\bar\eta_n\|_\infty \leq M_2$,
\begin{align*}
& \mathrm{Var}  \biggl(\int_{\mathcal{T}} Y_1(t) \exp[\bar{\eta}_n\{X_1(t)\}] h(X_1(t))\, dt\biggr)\\
& \qquad \leq  |\mathcal{T}|\exp (2M_2) E \biggl(\int_{\mathcal{T}} Y_1(t) h^2(X_1(t))\, dt \biggr)
=  |\mathcal{T}| \exp (2M_2) \|h\|^2.
\end{align*}
The above two displayed inequalities together imply the condition in our Lemma~\ref{lemma:score} and thus 
Condition~$\ref{l_12der}(i)$ follows from the lemma.

Finally, if $\|\bar\eta_n\|_\infty \leq C$,
\begin{equation*}
\begin{split}
\frac{d^2}{d\alpha^2}\ell(\bar\eta+\alpha g) 
& = -  \frac{1}{n} \sum_{i=1}^n  \biggl( \int_{\mathcal{T}}Y_i(t) g^2(X_i(t)) \exp \bar\eta_1(X_i(t))\, dt\biggr)\\
& \leq - \exp (-C) \frac{1}{n} \sum_{i=1}^n  \biggl( \int_{\mathcal{T}}Y_i(t) g^2(X_i(t)) \, dt\biggr).
\end{split}
\end{equation*}
It follows from equivalence of the empirical and theoretical norms that Part (ii) of the condition in Lemma~\ref{lemma:hessian} holds, and thus Condition~\ref{l_12der}($ii$) holds according to the lemma.

\section{Application V: quantile regression}\label{sec:quantile}

Fixing $\tau \in (0,1)$, let $\eta_0(x)$ be the $\tau$-th quantile of the conditional distribution
of $Y|X=x$. We would like to estimate $\eta_0$ based on an i.i.d.\
sample of $\mathbf{W}= (X, Y)$, denoted as $\mathbf{W}_i = (X_i,Y_i), i=1, \dots, n$. 
For a candidate function $h$ of the unknown function $\eta_0$, define the ``log-likelihood'' functional as
\[
\ell(h; \mathbf{W}_1, \ldots, \mathbf{W}_n) = - \frac{1}{n}
\sum_{i=1}^n \rho_\tau(Y_i - h(X_i)),
\]
where $\rho_\tau(u) = (\tau - \mathbf{1}_{(u < 0)}) u$ is the check
function for quantile at the level $\tau$. This can be interpreted as a pseudo log-likelihood without making a
distribution assumption on the conditional distribution of $Y$ given
$X$. The quantile function $\eta_0$ maximizes the expected log-likelihood functional
\begin{equation}
  \label{exp_llh_quant_reg}
  \Lambda(h) = - E\{\rho_\tau(Y_i - h(X_i))\}.
\end{equation}

We verify conditions used in the master theorems under the following primitive assumptions. 
\begin{assumption*}[\textbf{QR}]
$(i)$  The function $\eta_0$ is bounded on $\mathcal{U}$.

$(ii)$  There are constants $B>0$ and $M_1, M_2>0$ such that for any interval $A \subset [-B, B]$, 
\[
M_1 |A| \leq P( Y-\eta_0(x) \in A |X=x) \leq M_2 |A|,
\]
where $|A|$ denotes the length of interval $A$.

$(iii)$ The distribution of $X$ is absolutely continuous and its density function is bounded away from zero and infinity on $\mathcal{U}$, that is, there exist constants $C_1, C_2 >0$ such that 
\begin{equation*}
    C_1 \leq f_X(x) \leq C_2, \quad \text{for } x \in \mathcal{U}.
\end{equation*}
\end{assumption*}

Similar to the regression case, define the empirical and theoretical
norms as in Section~\ref{sec:splines} with the weight function being
$w(x) \equiv 1$. Using the Knight identity \citep{knight1998},
\begin{equation}
  \label{Knight}
  \rho_\tau(u - v) - \rho_\tau(u) = v \{\mathbf{1}_{(u \leq 0)} - \tau\}
  + \int_0^v \{\mathbf{1}_{(u \leq s)} - \mathbf{1}_{(u \leq 0)}\} ds,
\end{equation}
we obtain
\begin{align*}
\Lambda(\eta_0 + h) - \Lambda(\eta_0) 
& = - E\{\rho_\tau(Y - \eta_0(X) - h(X)) - \rho_\tau(Y- \eta_0(X))\}\\
& = - E [ h(X) \{\mathbf{1}_{(Y-\eta_0(X) \leq 0)} - \tau\} \\
& \qquad + \int_0^{h(X)} \{\mathbf{1}_{(Y-\eta_0(X) \leq s)} - \mathbf{1}_{(Y-\eta_0(X) \leq 0)} \}\,  ds].
\end{align*}
Note the first part of the expectation is zero by the definition of
$\eta_0$. By conditioning and then changing the order of integration, we have
\begin{align*}
& \Lambda(\eta_0 + h) - \Lambda(\eta_0) \\
& \qquad= - E \biggl[ \int_0^{h(X)} E\{\mathbf{1}_{(Y-\eta_0(X) \leq s)} -
  \mathbf{1}_{(Y-\eta_0(X) \leq 0)}|X \}\,  ds\biggr] \\
& \qquad = - E \biggl[\int_0^{h(X)} \mathrm{sgn}(s) P \{Y- \eta_0(X) \text{ is between 0 and s}|X\}\, ds\biggr].
\end{align*}
If $\|h\|_\infty \leq B$, by Assumption~\textbf{QR}(ii), the above
quantity is between $-M_2\|h\|^2/2$ and $- M_1 \|h\|^2/2$.
This verifies Condition~$\ref{cond:exp-loglik}$ .

Define $\psi(u) =\tau-1$ for $u < 0$, and $\psi(u) = \tau$ for $u
\geq 0$. Then $\psi(u)$ is the derivative of $\rho_\tau(u)$ when $u\not
=0$ and the right derivative when $u=0$. The directional derivative 
at $\bar\eta_n$ along the direction of $g$ is
\[
\dot{l} [\bar\eta; g] (\mathbf{W}_1 ) = g(X_1) \psi(Y_1 - \bar\eta(X_1)).
\]
Since $|\psi(u)| \leq 1$, $\mathrm{Var} \{\dot{l} [\bar\eta; g] (\mathbf{W}_1) \} \leq \|g\|^2$.
Condition~$\ref{l_12der}(i)$ then follows from Lemma~\ref{lemma:score}.

It remains to verify Condition~$\ref{l_12der}(ii)$. Note that
\begin{equation}\label{eq:qr-score1}
\begin{split}
& \frac{d}{d\alpha} \ell(\bar\eta_n + \alpha g) \bigg|_{\alpha=1^+}
- \frac{d}{d\alpha} \ell(\bar\eta_n + \alpha g) \bigg|_{\alpha=0^+}\\
& \qquad =  \frac{1}{n} \sum_{i=1}^n g(X_i) \{\psi(Y_i - \bar\eta_n(X_i) -g(X_i)) -
\psi(Y_i -\bar\eta_n(X_i))\}. 
\end{split}
\end{equation}
Let $\epsilon_i = Y_i - \eta_0(X_i)$. Then
$Y_i - \bar\eta_n(X_i)  = \epsilon_i - \{\bar\eta_n(X_i) - \eta_0(X_i)\}$.
By the definition of $\psi(\cdot)$, the difference $\psi(Y_i - \bar\eta_n(X_i) -g(X_i)) -
\psi(Y_i -\bar\eta_n(X_i))$ is non-zero only when zero is between 
\[
Y_i - \bar\eta_n(X_i) -g(X_i) = \epsilon_i - \{\bar\eta_n(X_i) -
\eta_0(X_i)\} - g(X_i)
\]
and 
\[
Y_i - \bar\eta_n(X_i)  = \epsilon_i - \{\bar\eta_n(X_i) - \eta_0(X_i)\},
\]
or equivalently, when $\epsilon_i$ is between $\bar\eta_n(X_i) -
\eta_0(X_i) - g(X_i)$ and $\bar\eta_n(X_i) - \eta_0(X_i)$, and the value is 
$-\mathrm{sgn}\{g(X_i)\}$. Therefore,
\begin{equation}\label{eq:qr-score2}
 -  \frac{d}{d\alpha} \ell(\bar\eta_n + \alpha g) \bigg|_{\alpha=1^+}
+ \frac{d}{d\alpha} \ell(\bar\eta_n + \alpha g) \bigg|_{\alpha=0^+}
 =  \frac{1}{n} \sum_{i=1}^n |g(X_i)|\, I_i
\end{equation}
where
\[
I_i = \mathrm{I}(\text{$\epsilon_i$ is between $\bar\eta_n(X_i) -
\eta_0(X_i) - g(X_i)$ and $\bar\eta_n(X_i) - \eta_0(X_i)$}).
\]

Applying the Hoeffding inequality, we obtain
\[
P\biggl(\biggl|\frac{1}{n} \sum_{i=1}^n |g(X_i)| \{I_i - E(I_i|X_i)\}
\biggr|\geq t\bigg|X_i, i=1,\dots, n \biggr)
\leq 2 \exp \biggl(-\frac{nt^2}{2\|g\|_n^2}\biggr).
\]
It follows that
\begin{equation}\label{eq:gr-hoeffding}
\frac{1}{n} \sum_{i=1}^n |g(X_i)| I_i - \frac{1}{n} \sum_{i=1}^n |g(X_i)| E(I_i|X_i) 
= \|g\|_n\, o\biggl(\sqrt{\frac{\log n}{n}}\biggr).
\end{equation}
We may focus on $g\in \bbG$ satisfying
$\|g\|_\infty \leq B/2$, where $B$ is the constant in
Assumption~\textbf{QR}(ii).  Since $\|\bar\eta_n - \eta_0\|_\infty =o(1)$,
both $\bar\eta_n(X_i) - \eta_0(X_i) - g(X_i)$ and $\bar\eta_n(X_i) -
\eta_0(X_i)$ are in the interval $[-B,B]$. Using the assumption, we 
have that $P(I_i|X_i) \geq M_1 |g(X_i)|$. Thus,
\begin{equation}\label{eq:gr-cond3.2ii}
\frac{1}{n} \sum_{i=1}^n |g(X_i)| E(I_i|X_i) \geq M_1 \|g\|_n^2. 
\end{equation}
Combining \eqref{eq:qr-score2}--\eqref{eq:gr-cond3.2ii} and using the
equivalence between the empirical and theoretical norms (i.e.,
Proposition~\ref{equiv_norm}), we obtain the desired validity of Condition~$\ref{l_12der}(ii)$.

%%%%%%%%%%%%%%%%%%%%%%%%%%%%%%%%%%%%%%%%%%%%%%
%% Single Appendix:                         %%
%%%%%%%%%%%%%%%%%%%%%%%%%%%%%%%%%%%%%%%%%%%%%%
%\begin{appendix}
%\section*{???}%% if no title is needed, leave empty \section*{}.
%\end{appendix}
%%%%%%%%%%%%%%%%%%%%%%%%%%%%%%%%%%%%%%%%%%%%%%
%% Multiple Appendixes:                     %%
%%%%%%%%%%%%%%%%%%%%%%%%%%%%%%%%%%%%%%%%%%%%%%
%\begin{appendix}
%\section{???}
%
%\section{???}
%
%\end{appendix}

%%%%%%%%%%%%%%%%%%%%%%%%%%%%%%%%%%%%%%%%%%%%%%
%% Support information (funding), if any,   %%
%% should be provided in the                %%
%% Acknowledgements section.                %%
%%%%%%%%%%%%%%%%%%%%%%%%%%%%%%%%%%%%%%%%%%%%%%
\section*{Acknowledgements}
The authors would like to thank the anonymous referees and an
Associate Editor for their constructive comments that significantly improved the quality of this paper.

% The authors would like to thank ...
% 
% The first author was supported by ...
% 
% The second author was supported in part by ...
 
%%%%%%%%%%%%%%%%%%%%%%%%%%%%%%%%%%%%%%%%%%%%%%
%% Supplementary Material, if any, should   %%
%% be provided in {supplement} environment  %%
%% with title and short description.        %%
%%%%%%%%%%%%%%%%%%%%%%%%%%%%%%%%%%%%%%%%%%%%%%
%\begin{supplement}
%\stitle{???}
%\sdescription{???.}
%\end{supplement}

%%%%%%%%%%%%%%%%%%%%%%%%%%%%%%%%%%%%%%%%%%%%%%%%%%%%%%%%%%%%%
%%                  The Bibliography                       %%
%%                                                         %%
%%  imsart-???.bst  will be used to                        %%
%%  create a .BBL file for submission.                     %%
%%                                                         %%
%%  Note that the displayed Bibliography will not          %%
%%  necessarily be rendered by Latex exactly as specified  %%
%%  in the online Instructions for Authors.                %%
%%                                                         %%
%%  MR numbers will be added by VTeX.                      %%
%%                                                         %%
%%  Use \cite{...} to cite references in text.             %%
%%                                                         %%
%%%%%%%%%%%%%%%%%%%%%%%%%%%%%%%%%%%%%%%%%%%%%%%%%%%%%%%%%%%%%

%% if your bibliography is in bibtex format, uncomment commands:
\bibliographystyle{imsart-nameyear}  % Style BST file (imsart-number.bst or imsart-nameyear.bst)
%\bibliography{Splines&Penalization} % Bibliography file (usually '*.bib')

%% or include bibliography directly:
% \begin{thebibliography}{}
% \bibitem{b1}
% \end{thebibliography}

\newpage
\setcounter{page}{1}
\startlocaldefs
\newenvironment{rcases}
  {\left.\begin{aligned}}
  {\end{aligned}\right\rbrace}
\endlocaldefs

\begin{frontmatter}
%%%%%%%%%%%%%%%%%%%%%%%%%%%%%%%%%%%%%%%%%%%%%%
%%                                          %%
%% Enter the title of your article here     %%
%%                                          %%
%%%%%%%%%%%%%%%%%%%%%%%%%%%%%%%%%%%%%%%%%%%%%%
\title{Supplementary Material to \\Asymptotic Properties of Penalized Spline Estimators in Concave
  Extended Linear Models: Rates of Convergence}
\runtitle{Rates of Convergence of Penalized Splines}

\begin{aug}
%%%%%%%%%%%%%%%%%%%%%%%%%%%%%%%%%%%%%%%%%%%%%%
%%Only one address is permitted per author. %%
%%Only division, organization and e-mail is %%
%%included in the address.                  %%
%%Additional information can be included in %%
%%the Acknowledgments section if necessary. %%
%%%%%%%%%%%%%%%%%%%%%%%%%%%%%%%%%%%%%%%%%%%%%%
\author[A]{\fnms{Jianhua Z.} \snm{Huang} \thanksref{T1}
   \ead[label=e1]{jianhua@stat.tamu.edu}},
 \author[B]{\fnms{Ya} \snm{Su} \ead[label=e2]{suyaf@vcu.edu}}
\thankstext{T1}{Corresponding Author}
%%%%%%%%%%%%%%%%%%%%%%%%%%%%%%%%%%%%%%%%%%%%%%
%% Addresses                                %%
%%%%%%%%%%%%%%%%%%%%%%%%%%%%%%%%%%%%%%%%%%%%%%
\address[A]{Department of Statistics, Texas A\&M University, College Station, TX 77843-3143,
\printead{e1}}
\address[B]{Department of Statistical Sciences and Operations
  Research, Virginia Commonwealth University, Richmond, VA 23284-3083, 
\printead{e2}}
\end{aug}

\begin{abstract}
This main paper develops a general theory on rates of convergence of
penalized spline estimators for function estimation
when the likelihood functional is concave in candidate functions, where the likelihood is
interpreted in a broad sense that includes conditional likelihood,
quasi-likelihood, and pseudo-likelihood. The general theory is applicable to obtain results
in a variety of contexts. This supplementary document contains three
main topics: i. a literarture review of related asymptotic theory for smoothing splines
and polynomial splines; ii. additional examples to illustrate the
application of the general theory; iii. extension of the general
theory in the main paper to penalized tensor product splines and
penalized bivariate splines on triangulations.
\end{abstract}

\begin{keyword}[class=MSC2020]
\kwd[Primary ]{62G20}
\kwd[; secondary ]{62G05, 62G07, 62G08}
\end{keyword}

\begin{keyword}
\kwd{basis expansion, multivariate splines, nonparametric regression, polynomial splines, smoothing splines}
\end{keyword}

\end{frontmatter}
%%%%%%%%%%%%%%%%%%%%%%%%%%%%%%%%%%%%%%%%%%%%%%
%% Please use \tableofcontents for articles %%
%% with 50 pages and more                   %%
%%%%%%%%%%%%%%%%%%%%%%%%%%%%%%%%%%%%%%%%%%%%%%
%\tableofcontents

%%%%%%%%%%%%%%%%%%%%%%%%%%%%%%%%%%%%%%%%%%%%%%
%%%% Main text entry area:

\setcounter{figure}{0}
\setcounter{equation}{0}
\setcounter{table}{0}
\setcounter{section}{0}
\renewcommand{\thefigure}{S.\arabic{figure}}
\renewcommand{\theequation}{S.\arabic{equation}}
\renewcommand{\thesection}{S.\arabic{section}}
\renewcommand{\thesubsection}{S.\arabic{section}.\arabic{subsection}}
\renewcommand{\thetable}{S.\arabic{table}}

In this supplementary document, we first provide in Section~\ref{sec:liter} the related
literature on asymptotic theory of smoothing splines and polynomial
splines. Then we present two additional applications of our
theory in the main paper, namely,
estimation of the drift coefficient of a
diffusion type process in Section~\ref{sec:diffusion}, and estimation of the spectral density function
of a stationary time series in Section~\ref{sec:sden}. To show that the conclusions in our main
theorems hold in these contexts, we need only to verify the conditions
in these theorems. Finally, we present extensions of the theory in the
main paper to two multi-dimensional scenarios, namely, penalized tensor product splines in
Section~\ref{sec:ts}, and penalized bivariate splines on triangulations in Section~\ref{sec:bivariate-splines}.
If not stated otherwise, the numbers of theorems, lemmas, conditions,
equations refer to the main paper.

\section{Literature on asymptotic theory for smoothing
  splines and polynomial splines}~\label{sec:liter}

Since the smoothing spline estimators and polynomial spline estimators
can be considered as two extreme cases of the penalized spline
estimators, it is natural to expect that the asymptotic behaviors of
the penalized spline estimators should be related to these two kinds
of estimators. This section provides the relevant literature
that supplements the literature on penalized splines reviewed in
the main paper.

The asymptotic properties of the integrated mean squared error for
smoothing spines in the context of penalized least squares regression 
have been studied by many authors, including
\citet{craven1978smoothing}, \citet{rice1981integrated}, \citet{rice1983smoothing},
\citet{speckman1985spline}, \citet{cox1988approximation}, \citet{oehlert1992relaxed}. 
The asymptotic rates of convergence for smoothing splines in other contexts 
have also been obtained. For example, \citet{cox1990asymptotic} provided a general 
asymptotic analysis of penalized likelihood estimates,
\citet{silverman1982estimation} and \citet{gu1993smoothing} considered the density estimation,
and \citet{gu1996penalized} considered hazard estimation.
Chapter 9 of \citet{gu2013smoothing} presents a comprehensive treatment of rates of
convergence for smoothing spline estimators in the general framework
of smoothing splines ANOVA. \citet{silverman1984spline} established
the asymptotic equivalence of smoothing spline estimators to certain
kernel estimators by constructing asymptotic equivalent kernels. 
\citet{nychka1995splines} studied the local asymptotic properties of
smoothing spline estimators.
\citet{shang2013local} is a recent rather thorough treatment of
the asymptotic properties of smoothing splines with applications to inference.

The integrated mean squared error for polynomial spline estimators
in least squares regression have been studied by \citet{barrow1978asymptotic},
\citet{agarwal1980asymptotic}, \citet{huang1998projection}, \citet{huang2003asymptotics}. The asymptotic rates
of convergence for polynomials splines have been studied in various 
estimation contexts, usually under a more general setup of structured multivariate
function estimation. For example,
\citet{stone1986dimensionality} and \citet{huang1998functional} considered generalized regression,
\citet{stone1990large} and \citet{huang2001concave} considered density estimation,
\citet{kooperberg1995rate} considered spectral density estimation,
\citet{kooperberg1995l2} considered hazard regression with censored data.
\citet{stone1994use}, \citet{hansen1994extended}, \citet{huang2001concave} presented
theoretical syntheses of rates of convergence for polynomial splines.
\citet{zhou1998local} and \cite{huang2003local} studied the local
asymptotic properties of polynomial spline estimators in nonparametric
regression. 

\section{Application VI: estimation of drift coefficient of diffusion type process}~\label{sec:diffusion}

Diffusion type processes are widely used to describe continuous time stochastic processes with application to physical, biological, medical, economic, and social sciences \citep{rao1999statistical}. As in \cite{stone2003statistical}, we consider nonparametric estimation of the drift coefficient of such a process as a function of some time-dependent covariate while assuming the diffusion coefficient as a function of time is known. 
To be specific, we will define a one-dimensional diffusion type process $Y(t)$ accompanied by a covariate process $X(t)$ 
as 
\begin{equation*}
  dY(t) = \eta_0(X(t)) dt + \sigma(t) dW(t), \quad 0 \leq t \leq \tau,
\end{equation*}
where $0 < \tau < \infty$ and $W(t)$ is a Wiener process. The diffusion coefficient $\sigma(t)$ is a
known function of time, while
the drift coefficient $\eta_0(X(t))$ is an unknown function of the
covariate process $X(t)$. Moreover, let $Z(t)$ be a $\{0,1\}$-valued process as a
censoring indicator, $Z(t) = 1$ if the processes $X(t)$ and $Y(t)$ are observed, and $Z(t) = 0$ otherwise.  

The estimation of $\eta_0$ will be based on a random sample of $n$ realizations of $\mathbf{W}= \{(X(t),Y(t), Z(t)): 0 \leq t \leq \tau\}$, denoted as $\{(X_i(t),Y_i(t), Z_i(t)): 0 \leq t \leq \tau\}, 1 \leq i \leq n$. The scaled (partial) log-likelihood at a candidate function $h$ can be expressed as 
\begin{equation*}
  \begin{split}
&     \ell(h; \mathbf{W}_1, \dots, \mathbf{W}_n) \\
    & \qquad = \frac{1}{n} \sum_{i=1}^n \biggl(\int_0^\tau Z_i(t) \frac{h(X_i(t))}{\sigma^2(t)} dY_i(t) - \frac{1}{2}
   \int_0^\tau Z_i(t)\frac{h^2(X_i(t))}{\sigma^2(t)} dt \biggr).
   \end{split}
 \end{equation*}
The expected (partial) log-likelihood is given 
\begin{align*}
  \Lambda(h) &= E\biggl(\int_0^\tau Z(t) \frac{h(X(t))}{\sigma^2(t)} dY(t) - \frac{1}{2}
   \int_0^\tau Z(t)\frac{h^2(X(t))}{\sigma^2(t)} dt \biggr) \\
  &= E\biggl(\int_0^\tau Z(t) \frac{h(X(t))\eta_0(X(t))}{\sigma^2(t)} dt - \frac{1}{2}
   \int_0^\tau Z(t)\frac{h^2(X(t))}{\sigma^2(t)} dt\biggr)
\end{align*}
where the second equality is obtained by taking conditional expectation of $Y(t)$ given $X(t)$, and then taking expectation on $X(t)$.

In this context, we define the empirical inner product and corresponding squared norm as
\[
\langle h_1,h_2 \rangle_n =
E_n \int_0^\tau Z(t)h_1(X(t)) \frac{h_2(X(t))}{\sigma^2(t)}\, dt
\]
and $\|h\|_n^2 = \langle h, h \rangle_n$. The corresponding theoretical quantities are
\[
\langle h_1,h_2 \rangle = E\int_0^\tau
Z(t)h_1({X}(t)) \frac{h_2(X(t))}{\sigma^2(t)}\, dt
\] 
and $\|h\|^2 = \langle h, h\rangle$.

We verify conditions used in the master theorems in the main paper under the following primitive assumptions, which were used in \cite{stone2003statistical} for the same context.

\begin{assumption*}[\textbf{DP}]\label{Diffusion_con}
$(i)$  The function $\eta_0$ is bounded on $\mathcal{U}$.

$(ii)$  There are two positive constants $M_2 \geq M_1$ such that $M_1 \leq
  \sigma^{-2}(t) \leq M_2$ whenever $Z(t) = 1$. 

$(iii)$ There are constants $M_4 \geq M_3 > 0$ such that 
\begin{equation*}
    M_3 |A| \leq E\biggl(\int_0^\tau Z(t)\, \mathrm{I}(X(t) \in A)\biggr) \leq M_4 |A| 
\end{equation*}
for all Borel subset $A$ of $\mathcal{X}$, where $|A|$ denotes the Lebesgue measure of $A$.
\end{assumption*}

Under these assumptions, the theoretical norm is equivalent to the $L_2$ norm w.r.t.\ the Lebesgue measure; see (3.1) of \cite{stone2003statistical}. Moreover, a simple calculation implies that \eqref{eq:2ndder-exp-loglik} holds with $M_1=M_2=1$; see (3.3) of \cite{stone2003statistical}. Condition~$\ref{cond:exp-loglik}$ then follows from Lemma~\ref{lemma:exp-loglik}.

Straightforward calculation gives
\begin{equation}\label{1st-der_1_diffus}
\dot{l}[\bar\eta_n; h](\mbf{W}_1) = \int_0^\tau Z_1(t) \frac{h(X_1(t))}{\sigma^2(t)} \{dY_1(t) - \bar{\eta}_n(X_1(t)) \, dt\}.
\end{equation}
Using
\[
dY_1(t) - \bar{\eta}_n(X_1(t)) \,dt = d W_1(t) + \{\eta_0(X_1(t)) - \bar{\eta}_n(X_1(t))\} \,dt,
\]
we obtain that
\begin{align*}
& \mathrm{Var} (\dot{l}[\bar\eta_n; h](\mbf{W}_1)) \\
& \qquad \leq 2 E\biggl(\biggl[\int_0^\tau Z(t) \frac{h(X(t))}{\sigma^2(t)}\, dW_1(t) \biggr]^2\biggr) \\
& \qquad \qquad + 2 E\biggl(\biggl[\int_0^\tau Z(t) \frac{h(X(t))}{\sigma^2(t)}[\eta_0(X(t)) -
  \bar{\eta}_n(X(t))] dt \biggr]^2\biggr).
\end{align*}
Note that $E [\{\int_0^\tau f(t, X(t) \, d W_1(t) \}^2] = E\{\int_0^\tau f^2(t, X(t))\,dt \}$. 
Because of the boundedness of $\eta_0$ and $\bar\eta_n$ and Assumption ({\bf DP})(ii), the right side of the above inequality is bounded above by
\[
2 E\biggl(\int_0^\tau Z(t) \frac{h^2(X(t))}{\sigma^4(t)} dt\biggr)
+ M_1 E\biggl(\int_0^\tau Z(t) \frac{h^2(X(t))}{\sigma^4(t)} dt\biggr) 
\leq M_2 \|h\|^2.
\]
Therefore, the condition in our Lemma~\ref{lemma:score} holds and thus Condition~$\ref{l_12der}(i)$ follows from the lemma.

Finally, (3.4) of \cite{stone2003statistical} indicates that Part (ii) of the condition in Lemma~\ref{lemma:hessian} holds, and thus Condition~\ref{l_12der}($ii$) holds according to the lemma.

\section{Application VII: spectral density estimation for a stationary time series}\label{sec:sden}

We follow the logspline spectral density estimation formulation presented in \citet{kooperberg1995rate}. Consider a stationary linear time series $\{X_t\}$ taking the form
\begin{equation*}
  X_t = \sum_{j = -\infty}^\infty a_j Z_{t-j},
\end{equation*}
where $\{Z_j\}_{j=-\infty}^\infty$ is an independent Gaussian white noise sequence
with mean zero and variance $\sigma^2$.
The theoretical spectral density function $f(\cdot)$ for $\{X_t\}$ is given by
\begin{equation*}
  f(\lambda) = \frac{\sigma^2}{2\pi} \biggl| \sum_{j=-\infty}^\infty
  a_j \exp(-ij\lambda)\biggr|^2, \quad -\pi \leq \lambda \leq \pi.
\end{equation*}
(It is actually well-defined on $\mathbb{R}$ as a periodic function with period $2\pi$.)
The spectral density function is always positive. To ensure the
positivity of its estimator, we consider estimating directly the log
spectral density function $\eta(\lambda) = \log
f(\lambda)$ and then apply the back-transformation $f(\lambda) = \exp \eta(\lambda)$.

Let $X_0, \ldots, X_{T-1}$ be a realization of length $T$ of the
time series. The periodogram is defined as
\begin{equation*}
  I^{(T)}(\lambda) = (2\pi T)^{-1} \biggl|\sum_{t=0}^{T-1} \exp(-i\lambda
  t)X_t\biggr|^2, \quad -\pi \leq \lambda \leq \pi
\end{equation*}
Write
\begin{equation}
 \label{dist_periodogram}
  I^{(T)}(\lambda_k) = f(\lambda_k)W_k, \quad \lambda_k = \frac{2\pi
    k}{T}, \quad \quad k = 0, \ldots, [T/2],
\end{equation}
where $W_k$, $k = 0, \ldots, [T/2]$, are the ratios of the periodogram and the spectral density
function evaluated at the grid points $\lambda_k$ between $[0,\pi]$.
According a standard result in time series analysis
\citep{series1991theory}, the asymptotic distribution of $W_k$ is free
of $f(\cdot)$, which is the exponential distribution with mean one when $\lambda_k$ is not on the boundary of $[0,\pi]$, $W_0$ and $W_{[T/2]}$ (if $T$ is even) have approximately the $\chi^2$ distribution with degree
of freedom one, and $W_0, W_1, \ldots, W_{[T/2]}$ are asymptotically
independent.

Since the spectral density function is symmetric about zero on $[-\pi,\pi]$ and is
periodic (with the period $2 \pi$), it is sufficient to model its segment on $[0,\pi]$ with additional constraints that $f'(0) = f'''(0) = f'(\pi)
= f'''(\pi) = 0$ and $\eta'(0) = \eta'''(0) = \eta'(\pi)
= \eta'''(\pi) = 0$. Letting $\bbG_1$ be a space of splines without constraints, we
use the following subspace of $\bbG_1$ as the estimation space
\begin{equation*}
  \bbG = \{g \in \bbG_1 : g'(0) = g'''(0) = g'(\pi) = g'''(\pi) = 0\}.
\end{equation*}

Set $I_k = I^{(T)}(\lambda_k)$,  $k = 1,2, \ldots, [T/2]$. 
According to (\ref{dist_periodogram}) and the asymptotic distribution
of $W_k$'s,  we can write the (approximate)
log-likelihood function of the periodogram for a candidate function $h
\in \bbG$ as 
\begin{equation*}
  \ell(h) = \frac{1}{[T/2]} \sum_{k=0}^{[T/2]}\psi(I_k,\lambda_k,h), 
\end{equation*}
where
\begin{equation*}
  \psi(y,\lambda;h) = \biggl\{\frac{\delta_\pi(\lambda)}{2} -
  1\biggr\}[h(\lambda) + y \exp(-h(\lambda))]
\end{equation*}
for $0 < \lambda \leq \pi$ and $y \geq 0$,  $\delta_\pi(\lambda) =
1$ if $\lambda=\pi$ and $\delta_\pi(\lambda)=0$ otherwise.
Define the (approximate) expected log-likelihood function as
\begin{equation*}
  \Lambda(h) = \frac{1}{[T/2]}
  \sum_{k=0}^{[T/2]} \biggl\{\frac{\delta_\pi(\lambda_k)}{2} -
  1\biggr\}[h(\lambda_k) + E(I_k) \exp(-h(\lambda_k))].
\end{equation*}

The above discussion has cast the spectral density estimation into the
framework of concave extended linear models with 
$\mathbf{W} = (\lambda, I^{[T]}(\lambda))$. It can be seen from
Theorem~8.12 of \cite{schumaker1981spline} that Proposition
\ref{schumaker} holds when the space is changed from $\bbG_1$ to the
periodic spline space $\bbG$.

We next verify conditions used in the master theorems in the main
paper under the following
primitive assumptions, which were used in \cite{kooperberg1995rate} for the same context.

\begin{assumption*}[\textbf{SD}]\label{spectral_density_con}
$(i)$ $\sum_j |a_j|  j^p < \infty$ for some $p > 1/2$.

$(ii)$  The spectral density function $f_0$ is bounded away from zero
and infinity on $[0,\pi]$.  
\end{assumption*}

Under Assumption~\textbf{SD}(i), the spectral density function $f_0$
is $p$-smooth, so is the logarithm $\eta_0 = \log f_0$.  
Assumption~\textbf{SD}(ii) is equivalent to the assumption that $\eta_0$ is bounded. 
 
In this context, we define the empirical inner product and corresponding squared norm as
\[
\langle h_1,h_2 \rangle_n =
\frac{1}{[T/2]} \sum_{k=0}^{[T/2]} I_k h_1 (\lambda_k) h_2 (\lambda_k)
\]
and $\|h\|_n^2 = \langle h, h \rangle_n$. The corresponding theoretical quantities are
\[
\langle h_1,h_2 \rangle = 
\frac{1}{[T/2]} \sum_{k=0}^{[T/2]} E(I_k) h_1 (\lambda_k) h_2 (\lambda_k)
\]
and $\|h\|^2 = \langle h, h\rangle$. 

By Theorem 10.3.1 of \cite{series1991theory}, $E(I_k) = f(\lambda_k) + O(T^{-1})$, where
$O(T^{-1})$ is uniform in $\lambda_k$. Assumption~\textbf{SD}(ii)
implies that $f(\lambda_k)$ is bounded uniform in $\lambda_k$.
Therefore, the theoretical norm $\|h\| $ is equivalent to $\|h\|_2$,
the $L_2$-norm w.r.t.\ the Lebesgue measure; see also (2) of
\cite{kooperberg1995rate}. 
Moreover, it can be shown that the empirical
and theoretical norms are asymptotically equivalent in the sense of  Proposition~\ref{equiv_norm}.

Observe that
\begin{align*}
& \frac{d^2}{d\alpha^2} \Lambda(h_1+\alpha h_2)\\
& \qquad = \frac{1}{[T/2]} \sum_{k=0}^{[T/2]}
\biggl\{\frac{\delta_\pi(\lambda_k)}{2} -  1\biggr\}
E(I_k) h_2^2(\lambda_k) \exp[-h_1(\lambda_k) -\alpha h_2(\lambda_k )].
\end{align*}
If $\|h_1\|_\infty \leq C$, $\|h_2\|_\infty \leq B$, then
\[
\exp(-C-B) \leq \exp[-h_1(\lambda_k) -\alpha h_2(\lambda_k )]\leq
\exp (C+B).
\] 
It follows that \eqref{eq:2ndder-exp-loglik} holds. Condition~$\ref{cond:exp-loglik}$ then follows from Lemma~\ref{lemma:exp-loglik}.

By definition of the likelihood,
\begin{equation*}
  \label{1st-der_1_sden}
\dot{l}[\bar\eta_T;h](\mathbf{W}_k) = 
  \{\delta_\pi(\lambda_k)/2 - 1\}h(\lambda_k) I_k
  \exp\{-\bar\eta_T(\lambda_k)\}.
\end{equation*}
Notice that $\dot{l}[\bar\eta_T;h](\mathbf{W}_k)$ are independent but
not identically distributed. We need the following weaker version of
Lemma~$\ref{lemma:score}$, which can be proved by a slight modification of
the proof of Lemma~$\ref{lemma:score}$.

\begin{lemma}~\label{lemma:score2}
If there exists a constant $M$ such that for any $h$ satisfying
$\|h\|^2 =1$, it holds that 
\[
\frac{1}{[T/2]} \sum_{k=0}^{[T/2]} \mathrm{Var}\{\dot{l}[\bar\eta_T;h](\mathbf{W}_k) \leq M,
\]
then Condition~$\ref{l_12der}$ $(i)$ holds.  
\end{lemma}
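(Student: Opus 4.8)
The plan is to mirror the proof of Lemma~\ref{lemma:score} essentially verbatim, changing only the two places where the i.i.d.\ assumption was used to control the variance of $(E_n - E)\,\dot{l}[\bar\eta_T;\cdot]$. The structural fact that survives is that the summands $\dot{l}[\bar\eta_T;h](\mathbf{W}_k)$ are \emph{independent}, and independence (not identical distribution) is all that the variance bookkeeping in the original proof ever required.

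First I would fix an orthonormal basis $\{\psi_j\}$ of the periodic spline space $\bbG$ and the eigenbasis $\{\phi_\nu\}$ of $J_q$ relative to $V$, exactly as before, noting that $N_T \asymp \delta_T^{-1}$ and that each $\psi_j$ and each $\phi_\nu$ has unit norm. The only new computation is that, for any fixed $h$, by independence of the $\mathbf{W}_k$,
\[
E\{(E_n - E)\,\dot{l}[\bar\eta_T;h]\}^2
= \frac{1}{[T/2]^2}\sum_{k=0}^{[T/2]} \mathrm{Var}\{\dot{l}[\bar\eta_T;h](\mathbf{W}_k)\}
= \frac{1}{[T/2]}\Biggl(\frac{1}{[T/2]}\sum_{k=0}^{[T/2]} \mathrm{Var}\{\dot{l}[\bar\eta_T;h](\mathbf{W}_k)\}\Biggr).
\]
Applying the hypothesis of the lemma to any $h$ with $\|h\|^2 = 1$ bounds the parenthesized Ces\`aro average by $M$, so the left-hand side is at most $M/[T/2]$. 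This is precisely the $M/n$ bound used in the proof of Lemma~\ref{lemma:score}, with the role of the sample size $n$ played by $[T/2]$; in particular it applies to each $\psi_j$ and each $\phi_\nu$.

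With that substitution in hand, the remainder is unchanged. The Cauchy--Schwarz expansion in the basis $\{\psi_j\}$ gives the expected bound $\sum_j M/[T/2] \lesssim M/([T/2]\,\delta_T)$, while the expansion in the eigenbasis $\{\phi_\nu\}$, together with $\|g\|^2 + \lambda_T J_q(g) = \sum_\nu g_\nu^2(1+\lambda_T\rho_\nu)$ and Proposition~\ref{gu1}, gives the expected bound $\frac{M}{[T/2]}\sum_\nu (1+\lambda_T\rho_\nu)^{-1} = O\bigl(([T/2]\,\lambda_T^{1/(2q)})^{-1}\bigr)$. Taking the minimum of the two and applying Markov's inequality yields Condition~\ref{l_12der}$(i)$. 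I expect no genuine obstacle; the single point requiring care is to recognize that the independence-based variance identity produces exactly the averaged variance appearing in the hypothesis, so that a Ces\`aro variance bound---rather than the uniform per-observation bound available in the i.i.d.\ case---is all that is needed.
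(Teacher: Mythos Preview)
Your proposal is correct and is precisely the ``slight modification'' the paper alludes to: you keep the two Cauchy--Schwarz expansions (in the orthonormal basis and in the eigenbasis) intact and replace the single-observation variance bound $\mathrm{Var}\{\dot l\}\le M$ by the averaged bound, using independence of the $\mathbf{W}_k$ to write $E\{(E_n-E)\dot l[\bar\eta_T;h]\}^2$ as $[T/2]^{-1}$ times the Ces\`aro average in the hypothesis. That is exactly the argument the paper has in mind, and nothing further is needed.
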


Assume that $\bar\eta_T$ is bounded. We have that, for $h$ with
$\|h\|^2 =1$,
\begin{align*}
 &\frac{1}{[T/2]}\sum_{k=0}^{[T/2]} \mbox{Var}\{\dot{l}[\bar\eta_T;h](\mathbf{W}_k)\} \\
 &\qquad = \frac{1}{[T/2]}\sum_{k=0}^{[T/2]} \{\delta_\pi(\lambda_k)/2 - 1\}^2h^2(\lambda_k) \mbox{Var}(I_k) \exp\{-2\bar\eta_T(\lambda_k)\} \\
  &\qquad \leq M \frac{1}{[T/2]}\sum_{k=0}^{[T/2]} h^2(\lambda_k) \leq M \|h\|^2 = M;
\end{align*}
the first inequality makes use of the boundedness of
$\mbox{Var}(I_k)$, which is guaranteed by Theorem 10.3.2 (ii) of
\citet{series1991theory}, the second inequality follows from the equivalence of the theoretical
norm and the $L_2$ norm with respect to the Lebesgue measure. Consequently, Condition~$\ref{l_12der}$ $(i)$
holds by applying Lemma~\ref{lemma:score2}.

If $\bar\eta_T$ is bounded and $g$ is bounded, 
\begin{align*}
& \frac{d^2}{d\alpha^2}\ell(\bar\eta_T+ \alpha g) \\
&\qquad = \frac{1}{[T/2]}
  \sum_{k=0}^{[T/2]} \biggl\{\frac{\delta_\pi(\lambda_k)}{2} -
                                                           1\biggr\} I_k g^2(\lambda_k)\exp\{-\bar\eta_T(\lambda_k) - \alpha g(\lambda_k)\}\\
&\qquad \lesssim - \frac{1}{[T/2]}  \sum_{k=0}^{[T/2]} I_k g^2(\lambda_k) 
= -  \|g\|_n^2,
\end{align*}
This together with the asymptotic equivalence of the empirical and
theoretical norms implies that Part (ii) of the condition in Lemma~\ref{lemma:hessian} holds, and thus Condition~$\ref{l_12der}$($ii$) holds according to the lemma.

\section{Multivariate case: penalized tensor product splines}~\label{sec:ts}
In this section, we develop results for estimating multivariate
functions using penalized tensor product splines. The development is
in parallel with that for estimating univariate functions using
splines presented in the main paper. 

To obtain our results, it is critical to extend the propositions in
Section~2 of the main paper to the multivariate case. We will make
heavy use of existing mathematical results in the
monograph~\cite{schumaker1981spline}, which will be referred to as
\textbf{S1981} for short for the rest of this section. Note there is a
slight difference in our notations: We use $m$ to denote the degree of
splines for consistency with our main paper, while \textbf{S1981} used
$m$ to denote the order of splines. Our degree-$m$ splines correspond
to order-$(m+1)$ splines in \textbf{S1981}.
% \textcolor{red}{(Need to double check all results stated as spline order in S1981 have been converted to spline degree in this section.)}

\subsection{Summary of results in the univariate case}~\label{sec:univar-spline}
One important step for us to extend the results from the univariate case in the main paper to the multivariate case is to establish the approximation property of tensor product splines, or to extend Proposition~2.1 or Theorem~6.25 of \textbf{S1981} to tensor product splines. We outline in this section the steps in \textbf{S1981} for establishing its Theorem~6.25 and then in the next subsection extend the argument 
to tensor product splines. This subsection also introduces notations that will be used later. 

As in Section~2 of the main paper, we consider a compact interval $[a,b]$ and $k$ interior
knots $t_j, j=1, \dots, k$ in the interval, satisfying $a=t_0<t_1<\dots, t_k
<t_{k+1}=b$. The collection of all degree-$m$ spline functions with these interior knots
forms a linear vector space with dimension $N= m+k+1$. This vector space is referred to as the spline space below. We let $\delta_n = \max_j |t_{j+1}- t_j| $ denote the largest distance between any two neighboring knots, and assume that the knot sequence has the bounded mesh ratio. 

Consider the extended partition $s_1 = \cdots = s_{m+1} = a$,
$s_{m+2}=t_1, \ldots$, $s_{m+k+1}=t_k$, $s_{m+k+2} = \cdots =s_{2m+k+2}=b$.
Let $\{\tau_{ij} = s_i + (s_{i+m+1} - s_i) \frac{(j-1)}{m}: j =
1,\ldots, m+1\}$ be points equally located between $s_{i}$ and $s_{i+m+1}$.
For any bounded function $f$, let $[\tau_{i1},\ldots, \tau_{ij}] f$ be
its $(j-1)$th order divided difference over the points $\tau_{i1}, \dots, \tau_{ij}$ (Definition 2.49, \textbf{S1981}). Then define the dual functionals as 
\begin{eqnarray}~\label{eq:dual-functional}
  \label{dual_fd}
  \lambda_i f = \sum_{j=1}^{m+1} \alpha_{ij}\, [\tau_{i1},\ldots,
  \tau_{ij}] f,
\end{eqnarray}
where the coefficient $\alpha_{ij}$ depends on $s_{i+1},\ldots, s_{i+m}$ and
$\tau_{i1},\ldots,\tau_{i (j-1)}$ (Eqs. 6.38--6.39, \textbf{S1981}).
Let $N_i^{[m]}(\cdot)$ be the normalized B-splines of degree $m$ associated with the knots $s_i, \dots, s_{i+m+1}$ (Definition 4.19, \textbf{S1981}). The dual functionals $\lambda_i$ satisfy
$\lambda_i N_j^{[m]} = \sbf{1}(i=j)$ (see the discussion following Eq. 4.90, \textbf{S1981}).

Next, define a linear operator $Q$,
\begin{equation}\label{eq:interpolant}
Qf(x) = \sum_{i = 1}^{m+k+1} (\lambda_i f) N_i^{[m]}(x).
\end{equation}
The operation $Q$ satisfies two properties: 1. It maps a bounded function $f$ to the spline space; 2. It is invariant to any polynomial of degree $m$, that is, for any polynomial function $f$ of degree $m$, $Qf = f$ (Theorem 6.18, \textbf{S1981}). 

Finally, the function $\eta^*= Q\eta_0$ can be shown to have the
approximation properties given in Proposition~2.1 of the main
paper. As in \textbf{S1981}, let $D^r$ denote the differential operator
so that $D^rf(x)$ is the $r$th derivative of the function $f(\cdot)$ at $x$.
Let $L^p_\infty[a, b]$ denote the Sobolev space
$\{f: D^pf \in L_\infty[a,b]\}$. Assume $1 \leq p \leq m+1$. There is a constant $C$ such that, for all $f$ in $L^p_\infty[a, b]$, there exists a polynomial $p_f$ of degree $m$ such that $\|D^j(f - p_f)\|_\infty \leq C (b-a)^{p-j}$ for all $j \leq p-1$ (Theorem 3.20, \textbf{S1981}). Based on this and the triangular inequality, the upper bounds for the quantities $D^\alpha(\eta_0 - Q\eta_0)$ and $D^\alpha Q\eta_0$ can be studied through $D^\alpha Q(\eta_0-p_{\eta_0})$ on any local interval between two extended knots (Theorem 6.24, \textbf{S1981}) and then these bounds are extended to the whole interval (Theorem 6.25, \textbf{S1981}). 

We end this subsection by presenting and proving some properties of
the dual functionals defined in \eqref{eq:dual-functional}. To
simplify notation, we drop the subscript of $\delta_n$ in the next lemma.

\begin{lemma}\label{dual}
Consider the linear functionals $\lambda_i$ defined in \eqref{dual_fd}.
\begin{enumerate}
\item For all $1 \leq
i \leq m + k + 1$ and $1 \leq j \leq m + 1$, 
\[
\lvert \alpha_{ij} \rvert \leq (m + 1)^{j-1} \delta^{j-1}. 
\]      
\item For all $f \in C^{p-1}[a, b]$,
\[  
\lambda_i f = \sum_{j=1}^p \alpha_{ij} \frac{\partial^{j-1}}{\partial
  x} f(\theta^{ij}) + \sum_{j = p+1}^{m+1} \widetilde{C}_j \alpha_{ij} \delta^{p-j}
\sum_{\nu = 0}^{j - p} \tilde{c}_{j \nu} \frac{\partial^{p-1}}{\partial
  x}f(\xi^{i \nu}),
\]
for some $\theta^{ij} \in [s_i, s_i + (s_{i+m+1}-s_i) \frac{(j-1)}{m}]$,
$\xi^{i \nu} \in [s_i+(s_{i+m+1}-s_i)\frac{\nu}{m},
s_i+(s_{i+m+1}-s_i)\frac{(\nu + p - 1)}{m}]$, where $\widetilde{C}_j$ is a constant that depends on spline degree
$m$ and satisfies $\sup_j \widetilde{C}_j < \infty$, and $\tilde{c}_{j \nu}$ is
a constant that depends on $p$ and satisfies $\sup_{j, \nu} \tilde{c}_{j\nu} < \infty$.
\end{enumerate}
\end{lemma}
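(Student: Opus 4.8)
I would prove the two parts from the explicit form of the coefficients $\alpha_{ij}$ recorded in Eqs.~6.38--6.39 of \textbf{S1981}, together with the mean value theorem for divided differences. For Part~1, the form of (6.38)--(6.39) shows that $\alpha_{ij}$ is, up to sign and a combinatorial weight whose absolute value is at most one, a product of $j-1$ factors, each a difference of two of the points $s_{i+1},\dots,s_{i+m},\tau_{i1},\dots,\tau_{i,j-1}$. All of these points lie in $[s_i,s_{i+m+1}]$, so each factor is at most $s_{i+m+1}-s_i$ in absolute value. Since $[s_i,s_{i+m+1}]$ spans $m+1$ consecutive knot gaps, each at most $\delta$, we have $s_{i+m+1}-s_i\leq(m+1)\delta$, and collecting the $j-1$ factor bounds gives $|\alpha_{ij}|\leq(s_{i+m+1}-s_i)^{j-1}\leq(m+1)^{j-1}\delta^{j-1}$.

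For Part~2, I would split the sum \eqref{dual_fd} at $j=p$ and convert each divided difference into a derivative value. When $j\leq p$ we have $f\in C^{p-1}\subseteq C^{j-1}$, so the mean value theorem for divided differences gives $[\tau_{i1},\dots,\tau_{ij}]f=f^{(j-1)}(\theta^{ij})/(j-1)!$ with $\theta^{ij}$ in the convex hull $[s_i,\,s_i+(s_{i+m+1}-s_i)(j-1)/m]$ of $\tau_{i1},\dots,\tau_{ij}$; this produces the first sum, the factor $1/(j-1)!$ being a $j$-dependent constant that is absorbed. When $j>p$ the divided difference has order $j-1\geq p$, so $f^{(j-1)}$ need not exist and the theorem cannot be applied directly. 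Instead I would apply the recursion $[\tau_a,\dots,\tau_b]f=([\tau_{a+1},\dots,\tau_b]f-[\tau_a,\dots,\tau_{b-1}]f)/(\tau_b-\tau_a)$ exactly $j-p$ times, unfolding $[\tau_{i1},\dots,\tau_{ij}]f$ into a linear combination of the order-$(p-1)$ divided differences over the consecutive blocks $[\tau_{i,\nu+1},\dots,\tau_{i,\nu+p}]f$, $\nu=0,\dots,j-p$. Because the $\tau_{il}$ are equally spaced with step $(s_{i+m+1}-s_i)/m$, every denominator produced is an integer multiple of this step, so the accumulated coefficients factor as $\delta^{p-j}$ times a constant $\widetilde C_j$ depending only on $m$ and bounded weights $\tilde c_{j\nu}$ depending on $p$. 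A final application of the mean value theorem to each order-$(p-1)$ block, now legitimate since $f\in C^{p-1}$, replaces it by $f^{(p-1)}(\xi^{i\nu})/(p-1)!$ with $\xi^{i\nu}$ in the convex hull $[s_i+(s_{i+m+1}-s_i)\nu/m,\,s_i+(s_{i+m+1}-s_i)(\nu+p-1)/m]$, exactly the stated range.

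The main obstacle is the coefficient bookkeeping in the case $j>p$: one must verify that the coefficients produced by unfolding the recursion factor cleanly as $\delta^{p-j}$ times quantities bounded uniformly in $i$, $j$, and $\nu$. This uses the bounded mesh ratio in both directions---$s_{i+m+1}-s_i\leq(m+1)\delta$ from above, and $s_{i+m+1}-s_i\gtrsim\delta$ from below so that the smallest denominator, a single step $(s_{i+m+1}-s_i)/m$, stays $\gtrsim\delta$---and it requires bounding the number of recursion paths reaching each block by a constant depending only on $m$ and $p$. Once these estimates are in hand, the two applications of the mean value theorem and the factor count in Part~1 are routine.
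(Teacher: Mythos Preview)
Your proposal is correct and matches the paper's proof: Part~1 is exactly Lemma~6.19 of \textbf{S1981}, and for Part~2 the paper likewise splits at $j=p$, applies the mean value theorem (their ``Result~1'') for $j\leq p$, and for $j>p$ unfolds the divided-difference recursion down to order $p-1$ before applying the mean value theorem. The only cosmetic difference is that the paper packages the $(j-p)$-fold unfolding as a closed-form identity with binomial coefficients, $[\tau_1,\dots,\tau_{r+1}]f=\sum_{\nu=0}^{r-i}(-1)^{r-i+\nu}\binom{r-i}{\nu}[\tau_{\nu+1},\dots,\tau_{\nu+1+i}]f/(\gamma_{i+1}\cdots\gamma_r)$, proved by induction, whereas you describe the same computation procedurally; your explicit remark that the bounded mesh ratio is needed to convert $(s_{i+m+1}-s_i)^{p-j}$ into $\delta^{p-j}$ times a uniformly bounded constant is a point the paper leaves implicit.
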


\begin{proof}[Proof of Lemma~\ref{dual}]
The first statement has been established in Lemma 6.19 of \textbf{S1981}.
Here we need only to prove the second statement.
The desired expression can be easily shown by using two basic results about divided differences. 

Result 1.  If $f \in C^{r}[a,b]$, $r\geq 1$, then for any points $a = \tau_1 \leq \ldots \leq \tau_{r+1} = b$, 
\begin{eqnarray}
\label{fd_1}
[\tau_1, \ldots, \tau_{r+1}] f = \frac{D^rf(\theta)}{r!}, \mbox{ for some } a
\leq \theta \leq b,
\end{eqnarray}
where $ D^r f$ denotes the $r$th derivative of $f$.

Result 2. Let $r \geq 1$ and $\tau_1, \ldots, \tau_{r+1}$ be  any
equally spaced points. Define $\gamma_j = \tau_{j+1} -
\tau_1$. Because of equal spacing, we have $\tau_{\nu + j +1} -
\tau_{\nu+1} = \gamma_j$ for $ 0 \leq \nu \leq r-1 $, $1\leq j \leq r - \nu$. 
 For any sufficiently smooth function $f$, and for $i$ satisfying $0 \leq i \leq r - 1$,
\begin{eqnarray}
\label{fd_2}
[\tau_1, \ldots, \tau_{r+1}] f = \sum_{\nu=0}^{r-i}\frac{(-1)^{r - i +\nu} {{r -
      i}\choose \nu}[\tau_{\nu+1},\ldots, \tau_{\nu + 1 +i}]f}{\gamma_{i+1}\cdots \gamma_r}.
\end{eqnarray}

Result 1 and Result 2 for $i=r-1$ follow from Theorem 2.51, \textbf{S1981}.
In particular, when $i = r - 1$, \eqref{fd_2} reduces to
\begin{eqnarray}\label{fd_3}
  [\tau_1, \ldots, \tau_{r+1}] f = \frac{[\tau_2, \ldots, \tau_{r+1}]f - [\tau_1, \ldots, \tau_{r}]f}{\gamma_r},
\end{eqnarray}
which is Eq.\ (2.91) of the cited theorem. 

We use mathematical induction to show that \eqref{fd_2} holds for general cases of $i$ and $r$. 
Assume \eqref{fd_2} holds for $i = k$ and all $r$ satisfying $k
\leq r - 1$, so that
\begin{eqnarray}
\label{fd_4}
[\tau_1, \ldots, \tau_{r+1}] f = \sum_{\nu=0}^{r- k}\frac{(-1)^{r - k +\nu} {{r -
      k}\choose \nu}[\tau_{\nu+1},\ldots, \tau_{\nu + k + 1}]f}{\gamma_{k+1}\cdots \gamma_r}.
\end{eqnarray} 
Applying\eqref{fd_3} with $\{\tau_1, \dots, \tau_{r+1}\}$  replaced by
$\{\tau_{\nu + 1}, \ldots, \tau_{\nu + k + 1}\}$ to obtain
\[
[\tau_{\nu + 1}, \ldots, \tau_{\nu + k + 1}] f = \frac{[\tau_{\nu + 2}, \ldots,
\tau_{\nu+k+1}]f - [\tau_{\nu+1}, \ldots,
\tau_{\nu+k}]f}{\gamma_k}.
\] 
Plugging this into the right side of \eqref{fd_4} and
recalculating the coefficients for the common term by making use of a basic property of
combination number ${n  \choose m} = {{n-1} \choose m - 1} + {{n-1}
  \choose m}$, we obtain the equality 
\[
[\tau_1, \ldots, \tau_{r+1}] f =
\sum_{\nu=0}^{r- k + 1}\frac{(-1)^{r - k + 1 +\nu} {{r -
      k + 1}\choose \nu}[\tau_{\nu+1},\ldots, \tau_{\nu + k}]f}{\gamma_{k}\cdots \gamma_r}.
\] 
Thus, \eqref{fd_2} holds for $i = k - 1$ and all $r$ satisfying $k \leq r - 1$. This completes the mathematical induction.

Now we are ready to prove the second statement of the lemma. Recall the expression of $\lambda_i f$ in \eqref{dual_fd}. For $f \in C^{p-1}[a, b]$, applying \eqref{fd_1} to the first $p$ summands we get the first summation term on the right hand side of the equation. For the remaining terms corresponding to $j = p + 1, \ldots, m + 1$, there is no direct link between the divided difference and the derivatives of the function. We first make use of \eqref{fd_2} to reduce the order of the divided differences from $j$ to $p$ by setting $r = j - 1$ and $i = p - 1$, and then apply \eqref{fd_1} to these divided differences each with order $p$. The second summation term on the right hand side of the equation then follows.
\end{proof} 

%We avoid some unnecessary mathematical
%definitions by providing their appearances in the book in brackets, the name of which
%is omitted for simplicity.

\subsection{Approximation property of tensor product splines}~\label{sec:approx-ts}

We consider tensor product splines defined on $d$ dimensional
hyper-rectangular domain $\Omega$, which for simplicity is assumed to
be $\Omega = [a, b]^d$. Extension to $\Omega = [a_1, b_1] \times \dots
\times [a_d, b_d]$ is straightforward with some notational
complications.
To give a concrete definition, we use the same knots for each dimension, which are assumed to be $a = t_0 < t_1 < \ldots < t_k < t_{k+1} = b$. For these knots, let 
$N_i^{[m]}(x_j), i=1, \dots, m+k+1,$ be the normalized degree-$m$ B-spline basis functions for dimension $j$ (or variable $j$). The linear space of degree-$m$ tensor product splines on $\Omega$,  
is spanned by the basis functions
\[
 N_{i_1 \cdots i_d}(x_1,\ldots, x_d) = N_{i_1}^{[m]}(x_1) \cdots N_{i_d}^{[m]}(x_d), \qquad 1\leq i_j\leq m+k+1.
\]
Same as in the previous subsection, we assume that the knots satisfy
the bounded mesh ratio property, and denote $\delta_n = \max_j
|t_{j+1} - t_j|$. With some complication of notation, it is
straightforward to extend our results to allow different spline
degrees and different knot placements at different dimensions (or for
different variables $x_j$'s). Let  $\mathcal{G}_n$ be the tensor
product spline space defined above, whose dimension depends on the
size of $\delta_n$ and thus is allowed to grow with the sample size.

We extend the linear operator defined in \eqref{eq:interpolant} to the multivariate case.
To this end, we first define the dual functionals of the tensor product spline space
through composition of dual functionals of the univariate spline spaces along each dimension,
i.e., 
\[
\lambda_{i_1 \cdots i_d} f = \lambda_{i_1}^{(1)} \circ
\lambda_{i_2}^{(2)} \circ \cdots \lambda_{i_d}^{(d)} f, \qquad 1\leq i_j \leq m+k +1,
\]
where $\{\lambda_i^{(j)}\}_{i=1}^{m+k+1}$ are the dual functionals along
dimension $j$ as defined in the previous subsection.
For bounded function $f$ on $\Omega$, let 
\begin{equation}
\label{interpolation}
Qf = \sum_{i_1 = 1}^{m+k+1} \cdots \sum_{i_d = 1}^{m+k+1}
(\lambda_{i_1 \cdots i_d} f) N_{i_1 \cdots i_d}(x_1, \ldots, x_d).
\end{equation}
This is a linear operator that maps bounded functions to tensor product splines.

In our estimation problem, the true unknown function $\eta_0$ is assumed to be in a $L_2$ space with
``smoothness" $p$, which is essentially a classical Sobolev space of
multi-dimensional functions (see
Eq. 13.14 and Eq. 13.25 of \textbf{S1981}),
\begin{eqnarray} \label{sobolev}
  L_\infty^p(\Omega) = \bigg\{f: \sum_{l=0}^p \sup_{|\sbf{\alpha}|=l} \|
  D^{\sbf{\alpha}} f \|_{\infty, \Omega}   < \infty \bigg\},
\end{eqnarray}
where $\sbf{\alpha} = (\alpha_1, \ldots, \alpha_d)$ in a multi-index,
$D^{\sbf{\alpha}} = D^{\alpha_1} \cdots D^{\alpha_d}$, and $|\sbf{\alpha}| =
\alpha_1 + \cdots + \alpha_d$. For any $f \in L_\infty^p(\Omega)$, define $\|f\|_{p, \infty} = \sum_{l=0}^p \sup_{|\sbf{\alpha}|=l} \|
  D^{\sbf{\alpha}} f \|_{\infty, \Omega}$. 
In the one-dimensional case, $d=1$, the Sobolev space in
\eqref{sobolev} reduces $L^p_\infty[a, b] = \{f: D^pf \in L_\infty[a,b]\}$,
which is a subset of the Sobolev space $W^p$ used in the main
paper. Assuming that the unknown function is in $L_\infty^p[a,b]$ instead
of $W^p[a,b]$ imposes a stronger smoothness assumption. This is the price we paid
for extending results to multi-dimension.

The following result gives the approximation property of tensor product splines. It extends Theorem 6.25 of \textbf{S1981}. 
\begin{thm}
\label{main}
Assume $m \geq p-1$.  If $\eta_0 \in L_\infty^p(\Omega)$, then
  \begin{equation*}
  \begin{rcases}
    \underset{|\sbf{\alpha}| = 0, \ldots, p - 1}{\|D^{\sbf{\alpha}}(\eta_0 - Q\eta_0)\|_\infty} \\
    \underset{|\sbf{\alpha}| \geq p, \alpha_i \leq m}{\|D^{\sbf{\alpha}} Q\eta_0\|_\infty}
  \end{rcases}
  \leq C \delta^{p - |\sbf{\alpha}|}_n \|\eta_0\|_{p, \infty}.
\end{equation*}
The constant $C$ depends on $d$, $p$, $m$ and $\eta_0$.
\end{thm}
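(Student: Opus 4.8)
The plan is to reduce the global multivariate estimate to a local, single-box computation and to exploit the polynomial reproduction of $Q$. Fix a point $\sbf{x}=(x_1,\dots,x_d)\in\Omega$; it lies in a product of extended-knot intervals, and the value of $Q\eta_0$ together with all of its derivatives at $\sbf{x}$ depends on $\eta_0$ only through the at most $(m+1)^d$ tensor-product B-splines $N_{i_1\cdots i_d}$ that are nonzero at $\sbf{x}$. Let $I_{\sbf{x}}\subset\Omega$ be the union of their supports, a box of side $\asymp\delta_n$. First I would invoke a local polynomial approximation: iterating the univariate Theorem~3.20 of \textbf{S1981} across the $d$ coordinates (equivalently, a Bramble--Hilbert argument on $I_{\sbf{x}}$) produces a polynomial $\pi=\pi_{\sbf{x}}$ of total degree at most $p-1$ with $\|D^{\sbf{\beta}}(\eta_0-\pi)\|_{\infty,I_{\sbf{x}}}\le C\delta_n^{\,p-|\sbf{\beta}|}\|\eta_0\|_{p,\infty}$ for every $|\sbf{\beta}|\le p-1$.

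The second step uses polynomial reproduction. Since each univariate quasi-interpolant of \eqref{eq:interpolant} reproduces polynomials of degree $\le m$ and $p-1\le m$, the operator $Q$ fixes $\pi$, so on $I_{\sbf{x}}$ we may write $Q\eta_0-\eta_0=Q(\eta_0-\pi)-(\eta_0-\pi)$ and, for any multi-index, $D^{\sbf{\alpha}}Q\eta_0=D^{\sbf{\alpha}}\pi+D^{\sbf{\alpha}}Q(\eta_0-\pi)$. Because $\pi$ has total degree at most $p-1$, the term $D^{\sbf{\alpha}}\pi$ vanishes whenever $|\sbf{\alpha}|\ge p$, and for $|\sbf{\alpha}|\le p-1$ it is controlled directly through $D^{\sbf{\alpha}}(\eta_0-\pi)$ by the approximation bound of the first step. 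Thus both conclusions reduce to the single estimate $\|D^{\sbf{\alpha}}Q(\eta_0-\pi)\|_{\infty,I_{\sbf{x}}}\le C\delta_n^{\,p-|\sbf{\alpha}|}\|\eta_0\|_{p,\infty}$ for all $\sbf{\alpha}$ with $\alpha_j\le m$.

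To prove that estimate I would differentiate the expansion \eqref{interpolation} termwise, obtaining $D^{\sbf{\alpha}}Q(\eta_0-\pi)(\sbf{x})=\sum_{i_1,\dots,i_d}(\lambda_{i_1}^{(1)}\cdots\lambda_{i_d}^{(d)}(\eta_0-\pi))\prod_{j}D^{\alpha_j}N_{i_j}^{[m]}(x_j)$, a sum of at most $(m+1)^d$ active terms. Each composed dual functional is bounded via Lemma~\ref{dual}(1): every $\lambda^{(j)}_i$ is a $\delta_n$-uniformly bounded functional, since the factor $\delta_n^{\,j-1}$ in the coefficient cancels the $\delta_n^{-(j-1)}$ growth of the order-$(j-1)$ divided difference, and composition across dimensions gives $|\lambda_{i_1}^{(1)}\cdots\lambda_{i_d}^{(d)}(\eta_0-\pi)|\le C\,\|\eta_0-\pi\|_{\infty,I_{\sbf{x}}}\le C\delta_n^{\,p}\|\eta_0\|_{p,\infty}$. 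Each univariate B-spline factor obeys the standard derivative bound $\|D^{\alpha_j}N_{i_j}^{[m]}\|_\infty\le C\delta_n^{-\alpha_j}$, valid precisely for $\alpha_j\le m$ and obtained by iterating the B-spline differentiation recursion. Multiplying the $d$ factors yields $\delta_n^{\,p}\cdot\delta_n^{-|\sbf{\alpha}|}$, which is the claimed rate; taking the supremum over $\sbf{x}$ completes the argument.

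The main obstacle is the composed-functional control in the last step: one must verify that the univariate dual functionals, when composed across dimensions and applied to the non-polynomial remainder $\eta_0-\pi$, retain the full gain of $\delta_n^{\,p}$ uniformly over the box and over the (mesh-ratio-bounded) knot configuration. This is exactly where Lemma~\ref{dual} enters---part (1) to guarantee the $\delta_n$-uniform boundedness of each factor, and part (2) as an alternative if one prefers to express $\lambda^{(j)}_i(\eta_0-\pi)$ directly through the derivatives $D^{\sbf{\beta}}(\eta_0-\pi)$ rather than through the sup-norm bound. A secondary point requiring care is that differentiation in $x_k$ commutes with the quasi-interpolant acting on $x_j$ for $j\ne k$ but not with the one acting on $x_k$; this is sidestepped by differentiating the explicit tensor-product expansion rather than the composition, but it must be tracked when attributing each derivative factor to its B-spline.
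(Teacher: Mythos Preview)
Your proposal is correct and follows essentially the same architecture as the paper's proof: localize to a box of size $\asymp\delta_n$, subtract a local polynomial of total degree $\le p-1$, use $Q\pi=\pi$ to reduce both inequalities to bounding $D^{\sbf{\alpha}}Q(\eta_0-\pi)$, then combine a bound on the composed dual functionals with the B-spline derivative estimate $\|D^{\alpha_j}N_{i_j}^{[m]}\|_\infty\le C\delta_n^{-\alpha_j}$.

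The one noteworthy difference is in how the dual-functional term is handled. The paper expands $\lambda_{i_1}^{(1)}\circ\lambda_{i_2}^{(2)}(\eta_0-T_\psi\eta_0)$ via Lemma~\ref{dual}(2) into a finite linear combination of partial derivatives $D^{\sbf{\beta}}(\eta_0-T_\psi\eta_0)$ with $|\sbf{\beta}|\le p-1$, and then bounds each of those derivatives by Lemma~\ref{approx_polynomial}. You instead observe that Lemma~\ref{dual}(1) alone already gives $|\lambda_i f|\le C(m)\|f\|_\infty$ (the $\delta_n^{j-1}$ in $\alpha_{ij}$ exactly cancels the $\delta_n^{-(j-1)}$ in the divided difference), so composition yields $|\lambda_{i_1}^{(1)}\cdots\lambda_{i_d}^{(d)}(\eta_0-\pi)|\le C\|\eta_0-\pi\|_{\infty,I_{\sbf{x}}}\le C\delta_n^{p}\|\eta_0\|_{p,\infty}$. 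Your route is a little cleaner and avoids writing out the detailed expansion \eqref{dual_dim2}; the paper's route, on the other hand, makes the dependence on the individual derivatives of $\eta_0$ more explicit. Both land on the same bound \eqref{upper_dual_2}, and the remainder of the argument (B-spline derivative bound, finitely many active terms, supremum over $\sbf{x}$) is identical.
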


The proof of Theorem \ref{main}, to be given at the end of this subsection, follows the same steps used for
establishing the result for the univariate case, as outlined in subsection~\ref{sec:univar-spline}.

The following proposition is a multi-dimensional analog of
Proposition~2.1 of the main paper. It is an immediate corollary of Theorem~\ref{main}. A multi-dimensional version of the
penalty functional is defined as
\begin{equation}\label{eq:penalty-multivar}
J_q(f) = \sum_{|\sbf{\alpha}| = q} C_{\sbf{\alpha}}
J_{\sbf{\alpha}}(f),
\end{equation}
where $C_{\sbf{\alpha}} = q!/(\alpha_1! \dots \alpha_d!)$, $J_{\sbf{\alpha}}(f) = \int_\Omega |D^{\sbf{\alpha}}f(x_1,\ldots, x_d)|^2\, dx_1 \cdots dx_d$.

\begin{prop}\label{prop:approx}
Assume $\eta_0 \in L_\infty^p(\Omega)$ and $m \geq {p}-1$.  
There exist a function $\eta^*_n \in \mathcal{G}_n$ and
constants $C_1$--$C_3$,  depending on $d$, $p$, $m$ and $\eta_0$ such that
\begin{equation*}
 \|\eta^*_n - \eta_0\|_2 \leq \|\eta^*_n - \eta_0 \|_\infty \leq C_2 \delta_n^{p},
\end{equation*}
and moreover, if $q \leq m$, then  $J_q(\eta^*_n) \leq C_3 \delta_n^{2(p - q) \wedge 0}.$
\end{prop}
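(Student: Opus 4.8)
The plan is to derive Proposition~\ref{prop:approx} directly from Theorem~\ref{main} by taking $\eta^*_n = Q\eta_0$, the tensor-product quasi-interpolant defined in \eqref{interpolation}. The bulk of the analytic work has already been absorbed into Theorem~\ref{main}, so this proof is essentially a matter of specializing the multi-index bounds there to the particular derivative orders that appear in the $L_2$-norm, the $L_\infty$-norm, and the penalty functional $J_q$.

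First I would address the approximation-error bounds. Setting $\sbf{\alpha} = \mathbf{0}$ (so $|\sbf{\alpha}| = 0 \leq p-1$, using $p \geq 1$) in the first branch of Theorem~\ref{main} gives immediately
\[
\|\eta^*_n - \eta_0\|_\infty = \|D^{\mathbf{0}}(\eta_0 - Q\eta_0)\|_\infty \leq C\,\delta_n^{p}\,\|\eta_0\|_{p,\infty}.
\]
Since $\Omega = [a,b]^d$ has finite Lebesgue measure, the $L_2$-norm is dominated by the $L_\infty$-norm up to the constant $|\Omega|^{1/2}$, which yields $\|\eta^*_n - \eta_0\|_2 \leq \|\eta^*_n - \eta_0\|_\infty \leq C_2\,\delta_n^{p}$ after absorbing $\|\eta_0\|_{p,\infty}$ and $|\Omega|^{1/2}$ into the constant $C_2$. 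This establishes the first displayed inequality.

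Next I would treat the penalty bound, which requires a short case split on whether $p > q$ or $p \leq q$, matching the exponent $2(p-q)\wedge 0$. For each multi-index $\sbf{\alpha}$ with $|\sbf{\alpha}| = q$ appearing in \eqref{eq:penalty-multivar}, I bound $J_{\sbf{\alpha}}(\eta^*_n) = \int_\Omega |D^{\sbf{\alpha}}Q\eta_0|^2 \leq |\Omega|\,\|D^{\sbf{\alpha}}Q\eta_0\|_\infty^2$. When $q = |\sbf{\alpha}| \leq p-1$, the first branch of Theorem~\ref{main} controls $\|D^{\sbf{\alpha}}(\eta_0 - Q\eta_0)\|_\infty$ by $C\delta_n^{p-q}$, and combining with $\|D^{\sbf{\alpha}}\eta_0\|_\infty \leq \|\eta_0\|_{p,\infty}$ (a bounded constant, since $q \leq p$) gives $\|D^{\sbf{\alpha}}Q\eta_0\|_\infty = O(1)$, hence $J_{\sbf{\alpha}}(\eta^*_n) = O(1) = O(\delta_n^{2(p-q)\wedge 0})$ because $p \geq q$ forces the exponent to be $0$. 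When $q \geq p$ (so the exponent $2(p-q)\wedge 0 = 2(p-q)$ is negative), I instead invoke the second branch of Theorem~\ref{main}, which applies precisely because $|\sbf{\alpha}| = q \geq p$ and $\alpha_i \leq m$ (guaranteed by $q \leq m$, so each component $\alpha_i \leq q \leq m$); it gives $\|D^{\sbf{\alpha}}Q\eta_0\|_\infty \leq C\delta_n^{p-q}$, whence $J_{\sbf{\alpha}}(\eta^*_n) \leq |\Omega|\,C^2\delta_n^{2(p-q)}$. Summing over the finitely many $\sbf{\alpha}$ with $|\sbf{\alpha}| = q$ and absorbing the constants $C_{\sbf{\alpha}}$ gives $J_q(\eta^*_n) \leq C_3\,\delta_n^{2(p-q)\wedge 0}$ in both regimes.

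The only genuinely delicate point—and thus the main obstacle—is ensuring the hypotheses of the relevant branch of Theorem~\ref{main} are met in the borderline case $q = p$, where one must decide which branch governs $D^{\sbf{\alpha}}Q\eta_0$ with $|\sbf{\alpha}| = p$; here either branch is applicable and both deliver the exponent $0$, so the two cases agree consistently. Care is also needed to confirm that the constraint $\alpha_i \leq m$ in the second branch is never violated under the standing assumption $q \leq m$, and that the overall constant depends only on $d$, $p$, $m$, and $\eta_0$ as claimed, which follows since all invoked constants from Theorem~\ref{main} and the factor $|\Omega|$ have exactly this dependence. Everything else is routine bookkeeping once Theorem~\ref{main} is in hand.
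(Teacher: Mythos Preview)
Your proposal is correct and matches the paper's approach exactly: the paper states only that the proposition ``is an immediate corollary of Theorem~\ref{main}'' with $\eta^*_n = Q\eta_0$, and your argument supplies precisely the specialization of the multi-index bounds that makes this immediate. One small remark: at $q=p$ only the second branch of Theorem~\ref{main} actually covers $|\sbf{\alpha}|=p$ (the first branch stops at $|\sbf{\alpha}|\leq p-1$), so your comment that ``either branch is applicable'' is slightly off, but your case split $q\leq p-1$ versus $q\geq p$ already routes this case correctly to the second branch and the conclusion is unaffected.
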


The proof of Theorem~\ref{main} will make use of a result about the approximation
property of polynomials. To present this result, we first give a
definition of Taylor expansion for a multi-dimension function.
Let $\Delta$ be a $d$-dimensional hyper-rectangle whose longest side
has length $\eta$. Let $\sbf{B}\subset \Delta$ be an open ball. Let $\psi\in
C_0^\infty(\sbf{B}) $ be a test function with $\int_{\sbf{B}} \psi
=1$. For a fixed integer $r$, define the total Taylor expansion of $f$ with respect to  $\psi$ as
\begin{equation}\label{eq:taylor}
  T_\psi f(\sbf{x}) = \sum_{|\sbf{\alpha}| \leq r} \int_{\sbf{B}}(-1)^{|\sbf{\alpha}|}D^{\sbf{\alpha}}\bigg[\psi(\sbf{y})\frac{(\sbf{x} - \sbf{y})^{\sbf{\alpha}}}{\sbf{\alpha}!}\bigg]f(\sbf{y})\, d\sbf{y},
\end{equation}
which is a polynomial of total degree $r$  (Eq. 13.33, \textbf{S1981};
we converted total order to total degree). 
The following lemma is a special case of Theorem 13.20 of \textbf{S1981} by taking $\Lambda = \{\boldsymbol{\alpha}: |\boldsymbol{\alpha}| < p\}$.

\begin{lemma}
  \label{approx_polynomial}
% \begin{equation*}
% \sup_{x_1, \ldots, x_d} D^{\sbf{\beta}}\{f(x_1, \ldots, x_d) - Tf(x_1,
% \ldots, x_d)\} \leq C \delta^{p - \mid \sbf{\beta} \mid} \sum_{\{\sbf{\alpha} : \mid \sbf{\alpha} + \sbf{\beta}
%   \mid = p\}} \sup_{x_1,
%   \ldots, x_d}  D^{\sbf{\alpha} + \sbf{\beta}} f(x_1, \ldots, x_d), 
% \end{equation*}
There exists a constant $C$ (depending only on $d$, $p$ and $\psi$) such that for all $f \in L_\infty^p(\Delta)$ and $|\sbf{\beta}| < p$, 
\begin{equation*}
\| D^{\sbf{\beta}}(f - T_{\psi}f) \|_\infty \leq C \eta^{p - \mid \sbf{\beta} \mid} \sum_{\sbf{\alpha} : \mid \sbf{\alpha} + \sbf{\beta}  \mid = p} \| D^{\sbf{\alpha} + \sbf{\beta}} f\|_\infty,
\end{equation*}
where $T_{\psi} f$ is the polynomial of total degree $r = p - 1$ defined by
the total Taylor expansion \eqref{eq:taylor}. 
\end{lemma}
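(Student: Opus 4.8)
The plan is to deduce the lemma directly from Theorem~13.20 of \textbf{S1981}, of which it is the announced special case; the only genuine work is to translate that general statement into the present notation and to verify that the index set $\Lambda = \{\sbf\alpha : |\sbf\alpha| < p\}$ produces exactly the claimed bound. Theorem~13.20 controls the derivatives of $f - T_\psi f$, where $T_\psi f$ is the averaged (total) Taylor polynomial associated with the same test function $\psi$ and ball $\sbf B$ appearing in \eqref{eq:taylor}, in terms of the top-order derivatives of $f$ and a power of the diameter $\eta$ of $\Delta$. First I would record the precise hypotheses of that theorem---$f \in L_\infty^p(\Delta)$, an open ball $\sbf B \subset \Delta$, a test function $\psi \in C_0^\infty(\sbf B)$ with unit integral, and total degree $r = p-1$ for $T_\psi f$---and confirm they coincide term for term with ours.

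The key step is the specialization $\Lambda = \{\sbf\alpha : |\sbf\alpha| < p\}$. With this choice the averaged Taylor polynomial retains precisely the monomials of total degree at most $p-1$, so it is exactly the polynomial defined in \eqref{eq:taylor}. Theorem~13.20 then furnishes, for each multi-index $\sbf\beta$ with $|\sbf\beta| < p$, a bound of the shape $\|D^{\sbf\beta}(f - T_\psi f)\|_\infty \leq C\, \eta^{p - |\sbf\beta|} \sum \|D^{\sbf\gamma} f\|_\infty$, where the summation runs over the multi-indices $\sbf\gamma$ of total degree $p$ arising as $\sbf\gamma = \sbf\alpha + \sbf\beta$. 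I would then check that, under this $\Lambda$, the surviving indices are exactly those with $|\sbf\alpha + \sbf\beta| = p$, matching the right-hand side of the lemma, and that the exponent of $\eta$ equals $p - |\sbf\beta|$ by the homogeneity of the Taylor remainder under scaling of the domain.

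Finally I would confirm the stated dependence of the constant: $C$ depends only on $d$, $p$, and $\psi$ (through the remainder and embedding constants intrinsic to the averaged Taylor construction), not on $f$ nor on the placement of $\Delta$, but only on its diameter $\eta$. The main obstacle is bookkeeping rather than analysis: one must reconcile Schumaker's \emph{order} convention with our \emph{degree} convention (as flagged at the start of Section~\ref{sec:ts}), and ensure the index-set reduction collapses the generic right-hand side of Theorem~13.20 into the clean sum over $|\sbf\alpha + \sbf\beta| = p$ with no spurious lower-order terms surviving. Should a self-contained argument be preferred instead, the alternative is to expand $f - T_\psi f$ via the integral remainder formula for the averaged Taylor polynomial, differentiate $|\sbf\beta|$ times under the integral sign, and estimate using that every surviving term carries exactly $p$ derivatives of $f$ together with a factor scaling like $\eta^{p-|\sbf\beta|}$; there the hard part would be bounding the remainder kernel uniformly in $\sbf x$ over $\Delta$.
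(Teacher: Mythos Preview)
Your proposal is correct and matches the paper's treatment exactly: the paper does not give a separate proof but simply states that the lemma is a special case of Theorem~13.20 of \textbf{S1981} with $\Lambda = \{\sbf\alpha : |\sbf\alpha| < p\}$, which is precisely the specialization you carry out. Your additional remarks on checking the index set, the exponent of $\eta$, and the dependence of the constant are reasonable bookkeeping but go beyond what the paper itself records.
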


\begin{proof}[Proof of Theorem \ref{main}]
Following the same strategy as used in the univariate case and summarized
in Section~\ref{sec:univar-spline}, we first obtain a bound on each hyper-rectangle $[t_{1_j},
t_{1_{j+1}}] \times \cdots \times [t_{d_j}, t_{d_{j+1}}]$ for $i_j =
0, \ldots, k$ with $t_{i_0} = a, t_{i_{k+1}} = b$, $i =1, \dots, d$, and then combine the
results. 

Because of the compositional definition of $Qg$, it is easy to see that $Qg = g$ whenever $g$ is a polynomial of total degree $m$. In fact, we can treat one dimension at a time with the rest dimensions fixed and use the invariant property of the operator at each dimension as presented after equation \eqref{eq:interpolant}. 
%Note that Condition \ref{cond_1} guarantees that the polynomial degree at each dimension is no greater than the degree of the spline space and thus \eqref{interpolation_univariate} can be applied.

%The desired bounds for $D^{\sbf{\alpha}}(\eta_0 - Q\eta_0),
%|\sbf{\alpha}| < p$  and $D^{\sbf{\alpha}}Q\eta_0, |\sbf{\alpha}|
%\geq p$ can be studied separately through the bounds for $D^{\sbf{\alpha}} Q(\eta_0 - T_\psi \eta_0)$ due to the following relationships between the quantities:

We have the following two identities
  \begin{align}
    D^{\sbf{\alpha}}(\eta_0 - Q\eta_0) &= D^{\sbf{\alpha}}(\eta_0 - T_\psi \eta_0 ) 
    - D^{\sbf{\alpha}} Q (\eta_0 - T_\psi \eta_0 ),   \mbox{ if }
                                         |\sbf{\alpha}| < p, \label{link1} \\
    D^{\sbf{\alpha}}Q\eta_0 &=  D^{\sbf{\alpha}} QT_\psi \eta_0  +
                              D^{\sbf{\alpha}} Q (\eta_0 - T_\psi
                              \eta_0 ),   \mbox{ if } 
                              |\sbf{\alpha}| \geq p. \label{link2} 
  \end{align}
We can apply Lemma~\ref{approx_polynomial} to bound the
$L_\infty$-norm of the first term on the right hand side of
(\ref{link1}). On the other hand, since $T_\psi\eta_0$ is a polynomial of total degree
$p-1$ ($\leq m$), we have that $QT_\psi\eta_0 = T_\psi \eta_0$ and
$D^{\sbf{\alpha}}QT_\psi \eta_0 = 0$ for any $|\sbf{\alpha}| \geq p$,
thus the first term on the right hand side of (\ref{link2}) becomes
zero. Therefore, it remains to obtain an upper bound of $D^{\sbf{\alpha}} Q(\eta_0 -
T_\psi \eta_0)$ on each hyper-rectangle $[t_{1_j}, t_{1_{j+1}}] \times \cdots \times
[t_{d_j}, t_{d_{j+1}}]$. According to the definition of $Q$ given in (\ref{interpolation}), we need
only to bound $\lambda_{i_1 \cdots i_d}(\eta_0 - T_\psi\eta_0)$ and $D^{\sbf{\alpha}}
\{\prod_{j=1}^d N_{i_j}^{[m]}(x_j)\}$. This can be achieved by
repeatedly applying Lemma~\ref{dual}, Lemma~\ref{approx_polynomial} and
the triangle inequality, as shown below. 

For simplicity, we present only details of the proof for $d = 2$. The general case
follows the same argument but with more complicated notations. 
Denote $\eta_{0c} = \eta_0 - T_\psi\eta_0$. 
Applying Lemma \ref{dual} twice on
$\lambda_{i_1 i_2}\eta_{0c} = \lambda_{i_1}^{(1)} \circ
\lambda_{i_2}^{(2)} \eta_{0c}$, one on each dimension, we obtain
\begin{equation}  \label{dual_dim2}
\begin{split}
& \lambda_{i_1}^{(1)} \circ \lambda_{i_2}^{(2)} \eta_{0c}  \\
= & \sum_{j_2
= 1}^p \alpha_{i_2 j_2}^{(2)} \sum_{j_1 = 1}^{p - j_2 + 1} \alpha_{i_1
j_1}^{(1)} \frac{\partial^{j_1 - 1}}{\partial x_1}
\frac{\partial^{j_2-1}}{\partial x_2} \eta_{0c}(\theta_1^{i_1 j_1},
\theta_2^{i_2 j_2}) \\ 
& + \sum_{j_2 = 1}^p \alpha_{i_2
j_2}^{(2)} \sum_{j_1 = p - j_2 + 2}^{m_1 + 1} \widetilde{C}_{j_1}
\alpha_{i_1 j_1}^{(1)} \delta_n^{p- j_2 -j_1 + 1} \\
&\qquad  \sum_{\nu_1=0}^{j_1 -
p + j_2 - 1} \tilde{c}_{j_1 \nu_1}\frac{\partial^{p - j_2}}{\partial x_1}
\frac{\partial^{j_2-1}}{\partial x_2} \eta_{0c}(\xi_1^{i_1 \nu_1},
\theta_2^{i_2 j_2})\\ 
& + \alpha_{i_1 1}^{(1)} \sum_{j_2 =
p+1}^{m+1} \widetilde{C}_{j_2} \alpha_{i_2j_2}^{(2)} \delta_n^{p-j_2}
\sum_{\nu_2 = 0}^{j_2 - p} \tilde{c}_{j_2 \nu_2}
\frac{\partial^{p-1}}{\partial x_2}\eta_{0c}(\theta_1^{i_1 1},
\xi_2^{i_2 \nu_2}).
\end{split}
\end{equation}
Here, we treat the summations above as zero whenever the lower end exceeds
the upper end, e.g. the second summand becomes zero if $m = 0, p = 1$ and so does the third summand if $m = p - 1$. 
From the expansion \eqref{dual_dim2}, the upper bound of
$\lambda_{i_1 i_2} \eta_{0c}$ depends on the upper bound of 
\begin{eqnarray}
  \label{upper_dual_1}
\qquad \quad\sup_{(x_1, x_2) \in \Omega} \bigg\lvert \frac{\partial^{j_1 - 1}}{\partial x_1} \frac{\partial^{j_2-1}}{\partial
x_2} \eta_{0c}(x_1, x_2) \bigg\rvert, \quad \mbox{ if } j_1,
  j_2 \geq 1,  j_1 + j_2 \leq p + 1,
\end{eqnarray}
and that of $\alpha_{i_1 j_1}^{(1)}$, $\alpha_{i_2 j_2}^{(2)}$; the
range for $i_1, j_1, i_2, j_2$ can be found in equations
\eqref{interpolation} and \eqref{dual_dim2}. 
By Lemma \ref{dual}, $\lvert
\alpha_{i_1 j_1}^{(1)} \rvert \leq \{(m + 1) \delta_n\}^{j_1 - 1}$ and $\lvert
\alpha_{i_2 j_2}^{(2)} \rvert \leq \{(m + 1) \delta_n\}^{j_2 - 1}$.
According to Lemma \ref{approx_polynomial}, \eqref{upper_dual_1}
is upper bounded by $C \delta_n^{p - j_1 - j_2 + 2} \|\eta_0\|_{p,\infty}$.
Thus, in view of \eqref{dual_dim2}, these together yield
\begin{equation}
  \label{upper_dual_2}
  |\lambda_{1 i_1} \circ \lambda_{2 i_2} \eta_{0c}| \leq C \delta_n^p \|\eta_0\|_{p,\infty},
\end{equation}
for a constant $C$ depending on $d$, $m$, $p$, $\psi$. 

On the other hand, for any $\sbf{\alpha} = (\alpha_1, \alpha_2)$ with
$\alpha_i \leq m$ for $i = 1,2$, and $x_1 \in (t_{1_j},t_{1_{j+1}}), x_2 \in (t_{2_j},t_{2_{j+1}})$,
\begin{equation}
\label{upper_dual_3}
\begin{split}
\frac{\partial^{\alpha_1}}{\partial
  x_1} \frac{\partial^{\alpha_2}}{\partial x_2} \{N_{i_1}^{[m]}(x_1)
N_{i_2}^{[m]}(x_2)\} & = \frac{d^{\alpha_1}}{d
  x_1} N_{i_1}^{[m]}(x_1) \frac{d^{\alpha_2}}{d x_2} 
N_{i_2}^{[m]}(x_2) \\
& \leq C \delta_n^{-\alpha_1-\alpha_2}.
\end{split}
\end{equation}
The last inequality uses the bound of derivatives of normalized
B-splines given in Theorem 4.22 of \textbf{S1981}.  
% Theorem 6.25 requires Theorem 6.24/3.20, which correspond to local
% scenario or polynomial approximation space. 

% What should the third statement look like? Maybe 
% \begin{eqnarray*}
% \sum_{|\sbf{\alpha}| = r
%   } \|D^{\sbf{\alpha}}Qf\|_{q} \leq C_1 \delta^{p - r +1/q - 1/p}
%   \sum_{|\sbf{\beta}|=p}\omega_{m - \sbf{\beta} +
%   1}(D^{\sbf{\beta}}f;\sbf{\Delta})_p, \mbox{ for } r = p, \ldots, m
%   - 1.
% \end{eqnarray*}

Finally, according to (\ref{interpolation}),
  \[
    \frac{\partial^{\alpha_1}}{\partial
  x_1} \frac{\partial^{\alpha_2}}{\partial x_2} Q(\eta_{0c}(x_1,x_2)) = \sum_{i_1,i_2 = 1}^{m+k+1}
(\lambda_{i_1 i_2} \eta_{0c}) \frac{\partial^{\alpha_1}}{\partial
  x_1} \frac{\partial^{\alpha_2}}{\partial x_2}N_{i_1 i_2}(x_1,x_2).
\]
The $L_\infty$ bound of $\frac{\partial^{\alpha_1}}{\partial
  x_1} \frac{\partial^{\alpha_2}}{\partial x_2} Q(\eta_{0} -
T_{\psi}\eta_0)$ on
 $(t_{1_j},t_{1_{j+1}}) \times (t_{2_j},t_{2_{j+1}})$ is established
 by using
the upper bound of $\lambda_{i_1 i_2} \eta_{0c}$ 
given in \eqref{upper_dual_2}, the upper bound of $\frac{\partial^{\alpha_1}}{\partial
  x_1} \frac{\partial^{\alpha_2}}{\partial x_2}N_{i_1 i_2}(x_1,x_2)$
given in \eqref{upper_dual_3}, and the triangle inequality.
\end{proof}

\subsection{Two properties of the tensor product spline space}
We now extend Proposition~2.2 of the main paper to the tensor product
spline space $\mathcal{G}_n$, defined in
Section~\ref{sec:approx-ts}. Extending the univariate spline case, consider 
a measure of the complexity of the tensor product spline space defined as
\begin{equation}
  \label{ts:A_n}
  A_n = \sup_{g \in \mathcal{G}_n, \|g\|_{2} \neq 0}\biggl\{\frac{\|g\|_\infty }{\|g\|_{2}}\biggr\}.
\end{equation}
Proposition~2.2 of the main paper and Lemma 1 of
\cite{huang1998projection} together yield the following result.
\begin{prop}\label{ts_constant}
Under the bounded mesh ratio, $A_n \asymp \delta_n^{-d/2}$.
\end{prop}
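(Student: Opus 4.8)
The plan is to exploit the product structure of $\mathcal{G}_n$ to reduce the bound on the multivariate complexity constant $A_n$ (defined in \eqref{ts:A_n}) to its univariate counterpart from Proposition~\ref{bspline_constant}, establishing matching lower and upper bounds that both scale like $(\delta_n^{-1/2})^d$. Since $A_n$ is exactly the $L_\infty$-to-$L_2$ Nikolskii constant of the space, and the space is spanned by the products $\prod_j N_{i_j}^{[m]}(x_j)$, I expect this constant to factorize across the $d$ coordinates. There are then two routes: either invoke Lemma~1 of \cite{huang1998projection}, which tensorizes such a constant, and substitute the univariate rate $\delta_n^{-1/2}$ from Proposition~\ref{bspline_constant} in each of the $d$ factors; or reprove that tensorization directly, which I outline below.

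For the lower bound I would use a product extremizer. The $\gtrsim$ half of Proposition~\ref{bspline_constant} supplies a univariate spline $g^\star\in\bbG_n$ with $\|g^\star\|_\infty \gtrsim \delta_n^{-1/2}\|g^\star\|_2$ (the supremum over the unit sphere is attained by compactness). Setting $g(x_1,\ldots,x_d)=\prod_{j=1}^d g^\star(x_j)\in\mathcal{G}_n$, Fubini gives $\|g\|_\infty=\prod_j\|g^\star\|_\infty$ and $\|g\|_2=\prod_j\|g^\star\|_2$, so $\|g\|_\infty/\|g\|_2$ is bounded below by a constant multiple of $\delta_n^{-d/2}$; hence $A_n\gtrsim\delta_n^{-d/2}$.

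The upper bound is the substantive step, and is where the tensorization must be handled with care: a generic $g\in\mathcal{G}_n$ is a \emph{sum} of products, not a single product, so one cannot simply multiply univariate inequalities. My plan is to integrate out one coordinate at a time. Introduce $M_k=\sup_{x_{k+1},\ldots,x_d}\int|g|^2\,dx_1\cdots dx_k$ for $0\le k\le d$, so that $M_0=\|g\|_\infty^2$ and $M_d=\|g\|_2^2$. For fixed values of the remaining variables, the partially integrated square $\phi(x_k)=\int|g|^2\,dx_1\cdots dx_{k-1}$ is, as a function of the active coordinate $x_k$, a nonnegative univariate spline of degree $2m$ on the same mesh (a combination of products $N_{i_k}^{[m]}(x_k)N_{i_k'}^{[m]}(x_k)$). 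Applying the univariate $L_\infty$-to-$L_1$ Nikolskii inequality $\|\phi\|_\infty\le C\delta_n^{-1}\|\phi\|_1$ (which follows from Theorem~5.1.2 of \cite{devore1993constructive}, and is the $L_1$ companion of Proposition~\ref{bspline_constant}, with constant $\delta_n^{-1}=(\delta_n^{-1/2})^2$) yields $M_{k-1}\le C\delta_n^{-1}M_k$. Iterating over $k=d,\ldots,1$ gives $\|g\|_\infty^2\le(C\delta_n^{-1})^d\|g\|_2^2$, i.e.\ $A_n\lesssim\delta_n^{-d/2}$, which together with the lower bound yields $A_n\asymp\delta_n^{-d/2}$.

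The main obstacle is precisely this upper-bound tensorization. Two points require care. First, one must recognize that integrating the square over the already-processed coordinates keeps the object a univariate spline in the next active coordinate: its degree doubles to $2m$, but the $\delta_n$-rate of the Nikolskii constant is unaffected (only the implied constant $C$ depends on the degree). Second, one must ensure the univariate constants are uniform over the frozen values of the other coordinates; this is guaranteed by the bounded mesh ratio assumption, which holds along every dimension since the same knot mesh is used. Once these are in place the iteration is routine, and this is exactly the content packaged by Lemma~1 of \cite{huang1998projection}.
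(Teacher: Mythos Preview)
Your proposal is correct and your first route---invoking Lemma~1 of \cite{huang1998projection} together with the univariate rate from Proposition~\ref{bspline_constant}---is exactly the paper's proof, which consists of a single sentence citing precisely those two results. The direct tensorization you sketch as an alternative (product extremizer for the lower bound, coordinate-by-coordinate Nikolskii iteration for the upper bound) is essentially the content of that cited lemma, so you have also unpacked what the paper leaves as a black box.
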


Let the empirical and theoretical norms $\|\cdot\|_n$ and $\|\cdot\|$ be defined as in Section~2 of
the main paper. We assume that $X$ has a density function which is bounded away from
0 and infinity on $\Omega$, and consequently the theoretical norm $\|\cdot\|$ is equivalent 
to $\|\cdot\|_2$, the usual $L_2$-norm relative to the Lebesgue measure.
The following proposition has been proved in
\cite{huang2003asymptotics}.

\begin{prop}\label{ts:equiv_norm}
  Under bounded mesh ratio condition, if \\
$\lim_n N_n \log(n) /n = 0$, then the
  empirical and theoretical norms are asymptotically equivalent, that
  is,
 \[
\sup_{g \in \mathcal{G}_n, \|g\|\not = 0} \bigg| \frac{\|g\|_n}{\|g\|} - 1\biggr| = o_P(1).
\]
\end{prop}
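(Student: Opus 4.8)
The plan is to reduce the uniform norm-equivalence statement to a bound on the operator norm of a perturbed Gram matrix and then exploit the local support of the tensor-product B-spline basis; this is the multivariate transcription of the univariate argument of \cite{huang2003asymptotics} cited for Proposition~\ref{equiv_norm}. Since $\sup_g|\|g\|_n/\|g\|-1|=o_P(1)$ is equivalent to $\sup_g|\|g\|_n^2-\|g\|^2|/\|g\|^2=o_P(1)$, and the latter is invariant under the choice of basis, I would expand any $g\in\mathcal{G}_n$ in the tensor-product B-spline basis $\{B_k\}_{k=1}^{N_n}$, writing $g=\sum_k c_k B_k$. By the stability of the multivariate B-spline basis together with the assumed equivalence of $\|\cdot\|$ and $\|\cdot\|_2$, one has $\|g\|^2\asymp\delta_n^d\|c\|^2$, so that $c^{\top}Gc\asymp\delta_n^d\|c\|^2$, where $G$ and $G_n$ denote the theoretical and empirical Gram matrices with entries $\langle B_j,B_k\rangle$ and $\langle B_j,B_k\rangle_n$. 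The quantity to control then becomes $\sup_{c\neq0}|c^{\top}(G_n-G)c|/(c^{\top}Gc)$, which is bounded by a constant multiple of $\delta_n^{-d}\|G_n-G\|_{\mathrm{op}}$. Thus it suffices to show $\|G_n-G\|_{\mathrm{op}}=o_P(\delta_n^d)$.

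First I would exploit the banded structure of the problem: because each tensor-product B-spline is supported on at most $(m+1)^d$ adjacent cells, the matrix $G_n-G$ has only $O(1)$ nonzero entries per row, with a bandwidth fixed in $n$. For a symmetric matrix this yields $\|G_n-G\|_{\mathrm{op}}\leq\max_j\sum_k|(G_n-G)_{jk}|\leq C\max_{j,k}|(G_n-G)_{jk}|$, reducing the operator-norm bound to an entrywise bound. Each entry is a centered empirical average, $(G_n-G)_{jk}=(E_n-E)\{B_j(X)B_k(X)w(X)\}$, whose summands are bounded (the B-splines lie in $[0,1]$ and $w$ is bounded) and have variance of order $\delta_n^d$, since the integrand is supported on a set of Lebesgue measure $\lesssim\delta_n^d$ and the density of $X$ is bounded.

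Then I would apply Bernstein's inequality to each band entry and take a union bound over the $O(N_n)$ such entries. With variance proxy $\asymp\delta_n^d$ and uniformly bounded summands, Bernstein gives $P(|(G_n-G)_{jk}|>\delta_n^d\epsilon)\leq 2\exp(-c\,n\delta_n^d\epsilon^2)$ for small $\epsilon$; since $\delta_n^d\asymp N_n^{-1}$ (consistent with $A_n^2\asymp N_n$ from Proposition~\ref{ts_constant}), the union bound yields a failure probability of order $N_n\exp(-c\,n\epsilon^2/N_n)$, which tends to zero precisely under the hypothesis $N_n\log(n)/n\to0$. This establishes $\max_{j,k}|(G_n-G)_{jk}|=o_P(\delta_n^d)$, hence $\|G_n-G\|_{\mathrm{op}}=o_P(\delta_n^d)$, completing the proof.

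I expect the main obstacle to be matching the concentration bookkeeping to the sharp condition $N_n\log(n)/n\to0$ rather than the cruder $A_n^2N_n/n\to0$ used in \cite{huang1998projection}: this requires the banded structure (so the effective union bound ranges over $O(N_n)$ rather than $O(N_n^2)$ entries) together with a Bernstein- rather than Hoeffding-type tail, so that the variance factor $\delta_n^d$ enters the exponent and buys back a factor of $N_n$. Securing the stability equivalence $\|g\|^2\asymp\delta_n^d\|c\|^2$ for tensor products also requires a multivariate B-spline stability estimate, but this is routine given the product structure, the only change from the univariate case being the replacement of $\delta_n$ by $\delta_n^d$ throughout.
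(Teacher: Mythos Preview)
Your proposal is correct and reproduces precisely the argument the paper relies on: the paper does not prove this proposition itself but simply cites \cite{huang2003asymptotics}, whose proof is the univariate prototype of the Gram-matrix/Bernstein argument you outline (banded structure from local B-spline support, variance $\asymp\delta_n^d$ from small support, union bound over $O(N_n)$ entries). Your multivariate transcription, with $\delta_n$ replaced by $\delta_n^d$ throughout, is exactly what is needed and matches the cited reference.
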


\subsection{The penalty functional}
Recall the definition of the penalty functional defined in \eqref{eq:penalty-multivar}
\begin{equation*}
J_q(f) = \sum_{|\sbf{\alpha}| = q} C_{\sbf{\alpha}}
J_{\sbf{\alpha}}(f),
\end{equation*}
where $C_{\sbf{\alpha}} = q!/(\alpha_1! \dots \alpha_d!)$,
$J_{\sbf{\alpha}}(f) = \int_\Omega |D^{\sbf{\alpha}}f(x_1,\ldots,
x_d)|^2\, dx_1 \cdots dx_d$. Define the quadratic functional
\[
V(\phi_\nu,\phi_\mu) = \int_{\Omega} \phi_\nu(x) \phi_\mu(x)
\omega(x) \, dx,
\]
where $\omega(\cdot)$ is a non-negative weight function.

Applying Theorem 3.1 of \cite{weinberger1974variational}, it can be
shown that, if $V$ is
completely continuous with respect to $J$, then $V$ and $J$ can be
simultaneously diagonalized in the following sense 
\citep[see Section 9.1 of][]{gu2013smoothing}. There exists a sequence of eigenfunctions $\phi_\nu$, $\nu =1, 2, \dots, $ and the associated sequence of eigenvalues
$\rho_\nu\geq 0$ of $J$ with respect to $V$ such that
\[
V(\phi_\nu,\phi_\mu) = \delta_{\nu\mu}, \quad 
J_q(\phi_\nu,\phi_\mu) = \rho_\nu \delta_{\nu\mu},
\]
where $\delta_{\nu\mu}$ is the Kronecker delta, 
\[
V(\phi_\nu,\phi_\mu) = \int_\mathcal{U} \phi_\nu(x) \phi_\mu(x)
\omega(x) \, dx, \quad
J_q(\phi_\nu,\phi_\mu) = \int_\mathcal{U} \phi_\nu^{(q)}(x)
\phi_\mu^{(q)}(x)\, dx.
\]
Furthermore, any function $h$ satisfying $J_q(h) < \infty$
has a Fourier series expansion with the eigen basis
$\{\phi_\nu\}$,
\[
h = \sum_\nu h_\nu\phi_\nu, \quad h_\nu = V(h,\phi_\nu),
\]
and 
\[
V(h) =\sum_\nu h_\nu^2, \quad J_q(h) = \sum_\nu \rho_\nu h_\nu^2.
\]
Therefore,
\[
\|h\|^2 + \lambda_n J_q(h) = (V + \lambda_n J)(h) 
= \sum_\nu (1 + \lambda_n \rho_\nu) h_\nu^2.
\]

The next result is a multi-dimensional version of Proposition~2.4 of
the main paper. It gives the rate of divergence to infinity of the eigenvalues.

\begin{prop}\label{multi:rate_eigenvalue}
Assume $V(h) = \|h\|^2 = \int_{\Omega} h^2(x) \omega(x)\,dx$ for a weight
function $\omega$ that is bounded away from zero and infinity, that is, there exist constants $C_1, C_2 > 0$ such that
  \begin{eqnarray*}
    C_1 \leq \omega(x) \leq C_2, \quad \text{for any} \quad a \leq x \leq b. 
  \end{eqnarray*} 
Then $V$ is completely continuous with respect to $J_q$.
Moreover, we have $0\leq \rho_\nu \uparrow \infty$, and
$\rho_\nu\asymp \nu^{2q/d}$ for all sufficiently large $\nu$.
\end{prop}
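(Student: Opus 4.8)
The plan is to reduce the generalized eigenvalue problem for the pair $(J_q, V)$ to the familiar spectral asymptotics of the polyharmonic operator $(-\Delta)^q$ on the box $\Omega=[a,b]^d$, and then read off the rate from a lattice-point count. First I would record the algebraic fact that makes $J_q$ isotropic: by the multinomial theorem, $\sum_{|\sbf{\alpha}|=q} C_{\sbf{\alpha}}\,\xi^{2\sbf{\alpha}} = (\xi_1^2+\cdots+\xi_d^2)^q = |\xi|^{2q}$, so $J_q$ is, up to integration by parts, the Dirichlet form of $(-\Delta)^q$; in particular $J_q(h)\asymp \sum_{|\sbf{\alpha}|=q}\|D^{\sbf{\alpha}}h\|_2^2$, the full $q$th-order Sobolev seminorm, whose null space is the finite-dimensional space $\mathcal{P}_{q-1}$ of polynomials of degree $<q$. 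Complete continuity of $V$ with respect to $J_q$ then follows from the Rellich–Kondrachov compact embedding $H^q(\Omega)\hookrightarrow L^2(\Omega)$ together with a Poincaré–Wirtinger inequality on the $V$-orthogonal complement of $\mathcal{P}_{q-1}$: given $\epsilon>0$, taking $L_1,\dots,L_k$ to be the $V$-projections onto $\mathcal{P}_{q-1}$ and onto finitely many leading modes forces $V(h)\le\epsilon J_q(h)$ on their common null space. This is exactly the hypothesis of Weinberger's Theorem~3.1 (already invoked in the excerpt), which yields the eigenpairs $(\phi_\nu,\rho_\nu)$ and the divergence $0\le\rho_\nu\uparrow\infty$.

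Next I would remove the weight. Since $C_1\le\omega\le C_2$, the weighted functional satisfies $C_1\|h\|_2^2\le V(h)\le C_2\|h\|_2^2$, so by the Courant–Fischer characterization $\rho_\nu=\min_{\dim S=\nu}\max_{0\ne h\in S}J_q(h)/V(h)$ the eigenvalues are squeezed between constant multiples of those obtained with $\omega\equiv1$. It therefore suffices to establish $\rho_\nu\asymp\nu^{2q/d}$ for the unweighted problem. Here I would sandwich $J_q$ between constant multiples of the separable penalty $\tilde J_q(h)=\sum_{i=1}^d\|\partial_i^q h\|_2^2$: the bound $\tilde J_q\le J_q$ is immediate because $\tilde J_q$ collects exactly the pure-derivative terms of $J_q$ (each carrying coefficient $C_{q e_i}=1$), while $J_q\le C(\tilde J_q+\|h\|_2^2)$ follows from a Gagliardo–Nirenberg interpolation bounding the mixed derivatives $D^{\sbf{\alpha}}$, $|\sbf{\alpha}|=q$, by the pure ones on the box. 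The separable operator has product eigenstructure assembled from the one-dimensional eigenvalues $\mu^{(i)}_k\asymp k^{2q}$ supplied by Proposition~\ref{rate_eigenvalue}, so its eigenvalues are the sums $\sum_i\mu^{(i)}_{k_i}$; counting $\#\{k\in\mathbb{Z}_{\ge0}^d:\sum_i k_i^{2q}\le\rho\}\asymp\rho^{d/(2q)}$ (lattice points under a superellipsoid) gives $N(\rho)\asymp\rho^{d/(2q)}$, i.e.\ $\rho_\nu\asymp\nu^{2q/d}$, and the sandwich together with the min-max characterization transfers this rate to $J_q$.

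The hard part will be the boundary conditions. The variational problem produces the \emph{natural} (free) boundary conditions of $(-\Delta)^q$, which for $q\ge2$ are neither Dirichlet nor Neumann and render the eigenfunctions non-elementary even on a box; one must argue that the exponent in $\nu^{2q/d}$ is insensitive to the self-adjoint boundary conditions. The clean way is to quote the classical Weyl asymptotic law for a positive elliptic operator of order $2q$ on a bounded $d$-dimensional domain, whose leading term $N(\rho)\sim c_{d,q}|\Omega|\,\rho^{d/(2q)}$ is boundary-condition independent; the self-contained alternative is a Dirichlet–Neumann bracketing on a grid of subcubes, which also forces the two-sided count. A secondary technicality is justifying the interpolation inequality for mixed versus pure derivatives on $\Omega$, which holds because the box satisfies the cone (extension) condition, and whose lower-order remainder $\|h\|_2^2$ only perturbs finitely many eigenvalues and hence not the rate.
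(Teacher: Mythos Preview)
Your proposal is sound, but it is worth noting that the paper does not actually prove this proposition: it simply states that ``the above result follows from Theorem~14.6 of Agmon (1965), as shown in the proof of Theorem~5.3 of Utreras (1988).'' Agmon's theorem is precisely the Weyl-type asymptotic law for elliptic operators of order $2q$ that you invoke as ``the clean way'' near the end of your sketch, so at that level your approach and the paper's coincide.

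What is genuinely different is that you supply a self-contained alternative route---removing the weight via Courant--Fischer, sandwiching $J_q$ between multiples of the separable penalty $\tilde J_q(h)=\sum_i\|\partial_i^q h\|_2^2$, and then counting lattice points under a superellipsoid to read off the exponent $2q/d$. This is more elementary and more transparent about where the rate comes from, and it ties back nicely to the one-dimensional Proposition~2.4 rather than importing the full elliptic machinery. The paper's citation approach is shorter and more general (it works on any bounded domain with suitable regularity, not just boxes), but it treats the result as a black box. Your only real technical debts are the mixed-derivative interpolation $J_q\le C(\tilde J_q+\|h\|_2^2)$ on the box---true, but deserving a reference---and the observation that the additive $\|h\|_2^2$ shifts the min-max values by a bounded amount and hence does not affect the asymptotic rate, which you handle correctly.
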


The above result follows from Theorem 14.6 of \cite{agmon1965}, as
shown in the proof of Theorem~5.3 of \cite{utreras1988}.

The following result in a multi-dimensional version of
Proposition~2.5. The proof can be found in Lemma~9.1 of \cite{gu2013smoothing}.

\begin{prop}\label{gu-multivar}
Assume there is a constant $C>0$ such that $\rho_\nu \ge C \nu^{2q/d} \,
(q> d/2), $ for all large $\nu$. If $\lambda_n
\rightarrow 0$, as $n\to \infty$, then
\begin{equation}\label{multi:appx-equality}
\sum_{\nu}\frac{1}{1+\lambda_n\rho_{\nu}}=O(\lambda_n^{-1/(2q/d)}).
\end{equation}
\end{prop}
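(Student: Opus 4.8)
The plan is to reduce the sum to a comparison with an integral, exactly as one would prove the univariate version (Proposition~\ref{gu1}) but with the exponent $2q$ replaced by $2q/d$. First I would fix $\nu_0$ large enough that the lower bound $\rho_\nu \ge C\nu^{2q/d}$ holds for all $\nu \ge \nu_0$, and split the sum as
\[
\sum_\nu \frac{1}{1+\lambda_n\rho_\nu} = \sum_{\nu < \nu_0}\frac{1}{1+\lambda_n\rho_\nu} + \sum_{\nu \ge \nu_0}\frac{1}{1+\lambda_n\rho_\nu}.
\]
The first sum has a fixed finite number of terms, each bounded by $1$, so it contributes $O(1)$; since $\lambda_n \to 0$ forces $\lambda_n^{-d/(2q)} \to \infty$, this $O(1)$ is absorbed into the claimed bound $O(\lambda_n^{-d/(2q)})$ (recall that $1/(2q/d) = d/(2q)$).

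For the tail, I would use $\rho_\nu \ge C\nu^{2q/d}$ to obtain the pointwise bound $\frac{1}{1+\lambda_n\rho_\nu} \le \frac{1}{1+\lambda_n C \nu^{2q/d}}$. Because the map $x \mapsto (1+\lambda_n C x^{2q/d})^{-1}$ is decreasing, the tail sum is dominated by the integral $\int_0^\infty (1+\lambda_n C x^{2q/d})^{-1}\,dx$. Substituting $u = (\lambda_n C)^{d/(2q)} x$ turns $\lambda_n C x^{2q/d}$ into $u^{2q/d}$ and gives $dx = (\lambda_n C)^{-d/(2q)}\,du$, so this integral equals $(\lambda_n C)^{-d/(2q)}\int_0^\infty (1+u^{2q/d})^{-1}\,du$, cleanly factoring out the $\lambda_n$-dependence.

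It then remains only to verify that $\int_0^\infty (1+u^{2q/d})^{-1}\,du$ is a finite constant. This is the one place where the hypothesis does real work: the integrand decays like $u^{-2q/d}$ at infinity, so convergence holds precisely when $2q/d > 1$, i.e.\ $q > d/2$, which is exactly the stated assumption. With this constant in hand the tail is $O(\lambda_n^{-d/(2q)})$ and the proof concludes. The main---and essentially only---obstacle is this integrability check; everything else is a routine monotone sum-to-integral comparison. I would therefore make sure the condition $q > d/2$ is invoked at precisely this step, mirroring the role played by $q > 1/2$ in the univariate Proposition~\ref{gu1}.
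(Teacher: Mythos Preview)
Your proof is correct and is the standard integral-comparison argument for this type of eigenvalue sum. The paper itself does not give a proof of this proposition but simply cites Lemma~9.1 of \cite{gu2013smoothing}; your argument is precisely the kind of routine computation that underlies that reference, with the condition $q>d/2$ used exactly where it is needed for the convergence of $\int_0^\infty (1+u^{2q/d})^{-1}\,du$.
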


\subsection{Convergence rate of penalized tensor product spline estimators}\label{ConvRate_TP}
In this section, we extend the results in Section~3 of the main paper
to penalized tensor product spline estimators for estimating a
multi-dimensional function. The estimator $\hat\eta_n$ is defined as the
maximizer among $g\in \mathcal{G}_n$ of the penalized likelihood
defined in equation (1) of the main paper, where 
$\mathcal{G}_n$ is the tensor produce spline space defined in
Section~\ref{sec:approx-ts}, and the multi-dimensional penalty functional $J_q(\cdot)$ defined in
\eqref{eq:penalty-multivar} replaces its one-dimensional version given in (2).

The rate of convergence of a penalized tensor product spline estimator depends on
three positive integers:
\begin{itemize}
\item $p$---the smoothness $p$ of the unknown function (i.e., we
  assume $\eta_0\in  L^p_\infty(\Omega)$);
\item $m$---the degree of the tensor product splines in $\mathcal{G}_n$;
\item $q$---the order of the penalty functional $J_q(\cdot)$.
\end{itemize}
To ensure that the penalty functional is well-defined on
$\mathcal{G}_n$, we assume that $q \leq m$. We also assume that 
$q> d/2$ to ensure the eigenvalues of the penalty functional have desired rate of divergence (see Proposition~\ref{gu-multivar}).

Using Propositions~\ref{prop:approx}--\ref{gu-multivar} to replace
Propositions 2.1--2.5 in the main paper, we can extend the results in
Section~3 to the multi-dimensional case in a straightforward way. The
results are summarized below. We need to replace $\mathbb{G}_n$ by
$\mathcal{G}_n$ when converting statements in Section~3 to tensor
product splines.

Theorem 3.1 holds without change in the multi-dimensional case. The
same proof applies, with a slight modification of replacing
Proposition~2.1 by Proposition~\ref{prop:approx}.

Theorem 3.2 needs a slight modification. We modify
Condition~3.2 ($i$) to the following:

\vskip .3em
{\sc Condition} $3.2 (i')$ 
\begin{equation*}
\sup_{g \in \mathcal{G}_n} \;  \frac{|(E_n - E) \; \dot{l}[\bar\eta_n; g]|^2}{\|g\|^2 + \lambda_n
  J_q(g)} = O_P\biggl( \frac{1}{n\lambda_n^{d/(2q)}}
\bigwedge \frac{1}{n\delta_n^d} \biggr).
\end{equation*}
\vskip .3em

By replacing Proposition~2.5 with Proposition~\ref{gu-multivar} in its proof,
Lemma~5.2 can be extended in an obvious manner to provide a sufficient
condition for Condition $3.2(i')$.

A multi-dimensional version of Theorem~3.2 with penalized tensor-product splines
is given below. It can be proved using the same proof of Theorem~3.2. 

\begin{thm}\label{est_err_ts}
  Assume Conditions $3.2 (i')$ and $3.2 (ii)$ hold. If $\lim_n \delta_n \lor \lambda_n =0 $ and 
$$\lim_n
  A_n^2 \allowbreak (\frac{1}{n\lambda^{d/(2q)}_n} \wedge \frac{1}{n\delta_n^d}) = 0,$$
  then $\|\hat\eta_n - \bar\eta_n\|_\infty = o_P(1)$ and 
\[
\|\hat\eta_n - \bar\eta_n\|^2 + \lambda_n J_q(\hat{\eta}_n - \bar{\eta}_n) =
  O_p\biggl(\frac{1}{n  \lambda_n^{d/(2q)}}\bigwedge \frac{1}{n \delta_n^d} \biggr).
\] 
\end{thm}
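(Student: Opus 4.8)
The plan is to mimic the proof of Theorem~\ref{est_err} essentially verbatim, replacing the univariate spline space $\mathbb{G}_n$ by the tensor product spline space $\mathcal{G}_n$ throughout. Concretely, I would apply the Convexity Lemma (Lemma~\ref{lemma:convex}) to the convex functional $C(g) = -\ell(g) + \lambda_n J_q(g)$ and the continuous functional $L(g) = \|g - \bar\eta_n\| + \lambda_n^{1/2} J_q^{1/2}(g - \bar\eta_n)$ on $\mathcal{C} = \mathcal{G}_n$, now taking the threshold
\[
s^2 = a^2 \biggl(\frac{1}{n\lambda_n^{d/(2q)}} \bigwedge \frac{1}{n\delta_n^d}\biggr),
\]
where $a > 0$ is to be chosen large. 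With $\eta^\dag = \bar\eta_n$ we have $L(\bar\eta_n) = 0 < s$, so by the lemma it suffices to verify the positivity condition \eqref{temp_2}, namely $(\partial/\partial\alpha)\, C(\bar\eta_n + \alpha(g - \bar\eta_n))|_{\alpha = 1^+} > 0$ for every $g \in \mathcal{G}_n$ with $L(g) = s$; the lemma then yields $L(\hat\eta_n) \leq s$ and hence the asserted bound on $\|\hat\eta_n - \bar\eta_n\|^2 + \lambda_n J_q(\hat\eta_n - \bar\eta_n)$.

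Next I would split the directional derivative into $\mathrm{I} + \mathrm{II}$ exactly as before, where $\mathrm{I} = -(E_n - E)\, \dot{l}[\bar\eta_n; g - \bar\eta_n]$ and $\mathrm{II}$ collects the second-order difference of $\ell$ together with $2\lambda_n J_q(g - \bar\eta_n)$. The bound $|\mathrm{I}| = O_P(s^2/a)$ now follows immediately from the assumed Condition~$3.2(i')$, whose right-hand side is precisely $(n\lambda_n^{d/(2q)})^{-1} \wedge (n\delta_n^d)^{-1}$; the verification of that condition, via the multivariate extension of Lemma~\ref{lemma:score}, is where the eigenvalue growth $\rho_\nu \asymp \nu^{2q/d}$ of Proposition~\ref{multi:rate_eigenvalue} and the summation estimate of Proposition~\ref{gu-multivar} enter, but those results sit upstream of the present theorem. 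The lower bound $\mathrm{II} \geq \tfrac12 (M \wedge 2) s^2$ is obtained verbatim from Condition~$3.2(ii)$, after checking that $\|g - \bar\eta_n\|_\infty \leq A_n \|g - \bar\eta_n\| \leq A_n s = o(1)$, so that the hypothesis $\|g - \bar\eta_n\|_\infty \leq B$ applies; here $A_n \asymp \delta_n^{-d/2}$ from Proposition~\ref{ts_constant}, combined with the standing assumption $\lim_n A_n^2\{(n\lambda_n^{d/(2q)})^{-1} \wedge (n\delta_n^d)^{-1}\} = 0$, furnishes the $o(1)$ conclusion, which likewise yields $\|\hat\eta_n - \bar\eta_n\|_\infty = o_P(1)$.

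Combining the two estimates gives $\mathrm{I} + \mathrm{II} \geq \tfrac12 (M \wedge 2) s^2 - O_P(s^2/a) > 0$ once $a$ is taken large, establishing \eqref{temp_2} and completing the proof. I do not expect any genuinely new obstacle: the architecture is identical to the univariate case, and the substantive changes amount to replacing the exponent $1/(2q)$ by $d/(2q)$ and $\delta_n$ by $\delta_n^d$ in the rate, together with using $A_n \asymp \delta_n^{-d/2}$ in place of $\delta_n^{-1/2}$. The single point demanding care is the bookkeeping that ties the modified rate of Condition~$3.2(i')$ to the threshold $s^2$ and keeps the $A_n$-dilated sup-norm negligible under the stated hypothesis; once the multi-dimensional propositions of the preceding subsections are available, these verifications are routine.
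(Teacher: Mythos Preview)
Your proposal is correct and matches the paper's own approach: the paper explicitly states that Theorem~\ref{est_err_ts} ``can be proved using the same proof of Theorem~3.2,'' and what you outline is exactly that proof with the substitutions $\mathbb{G}_n \mapsto \mathcal{G}_n$, $1/(2q) \mapsto d/(2q)$, $\delta_n \mapsto \delta_n^d$, and Condition~3.2$(i)$ $\mapsto$ Condition~3.2$(i')$. The only minor remark is that the specific rate $A_n \asymp \delta_n^{-d/2}$ is not actually needed in the argument---the abstract hypothesis $\lim_n A_n^2\{(n\lambda_n^{d/(2q)})^{-1} \wedge (n\delta_n^d)^{-1}\} = 0$ alone delivers $\|g - \bar\eta_n\|_\infty = o(1)$---but invoking it does no harm.
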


Combining the results of Theorems~3.1 and \ref{est_err_ts},
we obtain the following result that gives the rate of convergence of
$\|\hat{\eta}_n - \eta_0\|^2$ to zero for the penalized tensor
product spline estimator $\hat\eta_n$.
The result also gives a bound for the size of  $J_{q}(\hat{\eta}_n)$.

\begin{cor}\label{cor:conv_rates_ts}
Assume Conditions~$3.1$, $3.2 (i')$ and $3.2 (ii)$ hold.
 If $\lim_n \delta_n \lor \lambda_n =0 $ and 
\begin{equation}\label{eq:cond_cor_ts}
\lim_n A_n^2 \biggl (\delta^{2{p'}}_n \lor (\lambda_n \delta_n^{2(p'-q) \wedge 0} )
+ \frac{1}{n\lambda_n^{d/(2q)}} \bigwedge \frac{1}{n\delta_n^d}\biggr) = 0,
\end{equation}
 then $\|\hat{\eta}_n - \eta_0\|_\infty = o_P(1)$ and
\begin{equation*}
\|\hat{\eta}_n - \eta_0\|^2 + \lambda_n J_{q}(\hat{\eta}_n) 
= O_p\biggl(\delta_n^{2{p'}} \vee  (\lambda_n \delta_n^{2(p'-q) \wedge 0})   + \frac{1}{n
    \lambda_n^{d/(2q)}}\bigwedge \frac{1}{n \delta_n^d} \biggr).
\end{equation*}
\end{cor}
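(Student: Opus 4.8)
The plan is to combine the approximation-error bound with the estimation-error bound, exactly as in the univariate Corollary~\ref{cor:conv_rates}. The text already establishes the two ingredients in the multi-dimensional setting: Theorem~3.1 holds verbatim once Proposition~2.1 is replaced by Proposition~\ref{prop:approx}, and Theorem~\ref{est_err_ts} is the tensor-product analogue of the estimation-error theorem. The decomposition $\hat\eta_n - \eta_0 = (\hat\eta_n - \bar\eta_n) + (\bar\eta_n - \eta_0)$ then splits the target quantity into an estimation piece governed by Theorem~\ref{est_err_ts} and an approximation piece governed by Theorem~3.1, so the whole task reduces to feeding these two theorems the hypothesis \eqref{eq:cond_cor_ts} and reassembling their conclusions.

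First I would verify that \eqref{eq:cond_cor_ts} supplies the separate hypotheses of the two theorems. Because both summands inside the parentheses of \eqref{eq:cond_cor_ts} are nonnegative, the single limit forces simultaneously
\[
\lim_n A_n^2\{\delta_n^{2p'} \vee (\lambda_n \delta_n^{2(p'-q)\wedge 0})\} = 0 \quad\text{and}\quad \lim_n A_n^2\Bigl(\tfrac{1}{n\lambda_n^{d/(2q)}} \wedge \tfrac{1}{n\delta_n^d}\Bigr) = 0,
\]
which are respectively the growth hypotheses of Theorem~3.1 and Theorem~\ref{est_err_ts}; the side condition $\lim_n \delta_n \vee \lambda_n = 0$ is shared. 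Invoking Theorem~3.1 then gives $\|\bar\eta_n - \eta_0\|^2 + \lambda_n J_q(\bar\eta_n) = O(\delta_n^{2p'} \vee (\lambda_n \delta_n^{2(p'-q)\wedge 0}))$ together with $\|\bar\eta_n - \eta_0\|_\infty = o(1)$, while Theorem~\ref{est_err_ts} gives $\|\hat\eta_n - \bar\eta_n\|^2 + \lambda_n J_q(\hat\eta_n - \bar\eta_n) = O_P(\tfrac{1}{n\lambda_n^{d/(2q)}} \wedge \tfrac{1}{n\delta_n^d})$ together with $\|\hat\eta_n - \bar\eta_n\|_\infty = o_P(1)$.

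Next I would assemble the two pieces. The uniform statement is immediate from the triangle inequality, $\|\hat\eta_n - \eta_0\|_\infty \le \|\hat\eta_n - \bar\eta_n\|_\infty + \|\bar\eta_n - \eta_0\|_\infty = o_P(1)$. For the squared-norm-plus-penalty bound, the key point is that $J_q$ is a positive semidefinite quadratic form, so $J_q^{1/2}$ obeys the triangle inequality and hence so does the combined functional $g \mapsto \|g\| + \lambda_n^{1/2} J_q^{1/2}(g)$. Applying this functional to the decomposition, squaring, and invoking the elementary bound \eqref{eq:sq-ineq} (in the directions $\|g\|^2 + \lambda_n J_q(g) \le \{\|g\| + \lambda_n^{1/2} J_q^{1/2}(g)\}^2$ and $(u+v)^2 \le 2u^2 + 2v^2$) converts the sum of the two displayed rates into
\[
\|\hat\eta_n - \eta_0\|^2 + \lambda_n J_q(\hat\eta_n) = O_P\Bigl(\delta_n^{2p'} \vee (\lambda_n \delta_n^{2(p'-q)\wedge 0}) + \tfrac{1}{n\lambda_n^{d/(2q)}} \wedge \tfrac{1}{n\delta_n^d}\Bigr),
\]
which is the assertion.

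At the level of the corollary the argument is pure bookkeeping, so there is no genuine analytic obstacle here; the substantive work has already been absorbed into Theorems~3.1 and \ref{est_err_ts} and, upstream, into the dimension-dependent inputs Proposition~\ref{prop:approx}, Proposition~\ref{ts_constant} ($A_n \asymp \delta_n^{-d/2}$), and Proposition~\ref{gu-multivar} ($\sum_\nu (1+\lambda_n\rho_\nu)^{-1} = O(\lambda_n^{-d/(2q)})$), which are exactly what promote the univariate exponents $\tfrac{1}{2q}$ and $\delta_n^{-1}$ to the multivariate $\tfrac{d}{2q}$ and $\delta_n^{-d}$. The one place that deserves a moment of care is confirming that Theorem~\ref{est_err_ts} actually delivers these $\delta_n^{d}$ and $\lambda_n^{d/(2q)}$ rates through the modified Condition~$3.2(i')$ and the multivariate eigenvalue summation; granting that, the remaining manipulations are identical to those in the proof of Corollary~\ref{cor:conv_rates}.
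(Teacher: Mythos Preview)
Your proposal is correct and matches the paper's approach: the paper simply states that the corollary is obtained by ``combining the results of Theorems~3.1 and \ref{est_err_ts}'' without further detail, and you have supplied exactly the natural combination argument via the decomposition $\hat\eta_n - \eta_0 = (\hat\eta_n - \bar\eta_n) + (\bar\eta_n - \eta_0)$ and the triangle inequalities for $\|\cdot\|$ and $J_q^{1/2}$. One small point of precision: the target is $\lambda_n J_q(\hat\eta_n)$ rather than $\lambda_n J_q(\hat\eta_n - \eta_0)$, so the relevant triangle inequality is $J_q^{1/2}(\hat\eta_n) \le J_q^{1/2}(\hat\eta_n - \bar\eta_n) + J_q^{1/2}(\bar\eta_n)$, after which your squaring-and-regrouping step goes through unchanged.
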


This result covers all practical combinations of $p$, $q$ and $m$ with
the only restriction being the necessary requirement $q \leq m$ (otherwise the
penalty functional is not defined). Following this result, the
asymptotic behavior of the penalized tensor-product splines can be classified into
seven scenarios as shown in Table~\ref{summary_table_ts}. This table
is an extension of Table~\ref{summary_table} in the main paper to the multi-dimensional case.

\begin{table}[t]
\caption{Seven scenarios for rate of convergence {\rm ($\|\hat\eta_n -
    \eta_0\|^2 + \lambda_n J_q(\hat\eta_n)$)} of penalized tensor
  prodcut spline estimators in dimension $d$}
\label{summary_table_ts}
\begin{center}
%\begin{tabular}{|p{4cm}|c|>{\centering\arraybackslash}p{4cm}|}
\begin{tabular}{c >{\centering\arraybackslash}p{5cm} c}
\hline
\textbf{Rate of convergence} &\textbf{Parameters for achieving the
                                                 best rate} & \textbf{Best rate} \\[1em]
\hline\hline
\multicolumn{3} {c}{I. $ q<p'$ (i.e., $q < p$ and $q < m+1$)}\\[1em]
\hline
\multicolumn{3} {l}{1. $\lambda_n \lesssim \delta_n^{2p'}$} \\[.5em]
 $\delta_n^{2p'} + (n\delta_n^d)^{-1}$
 & $\delta_n\asymp n^{-1/(2p'+d)} $&  $n^{-2p'/(2p'+d)}$  (*) \\
\hline
\multicolumn{3} {l}{2. $\delta_n^{2p'} \lesssim \lambda_n \lesssim
  \delta_n^{2q}$} 
\\[.5em]
 $\lambda_n + (n \delta_n^d)^{-1}$
 & $\lambda_n \asymp \delta_n^{2p'}$,
$\delta_n\asymp n^{-1/(2p'+d)}$ &  $n^{-2p'/(2p'+d)}$  (*)  \\
\hline
\multicolumn{3} {l}{3. $\lambda_n \gtrsim \delta_n^{2q}$}\\[.5em]
$\lambda_n + (n \lambda_n^{d/(2q)})^{-1}$
& $\lambda_n \asymp n^{-2q/(2q+d)}$ &  $n^{-2q/(2q+d)}$ \\
\hline\hline
\multicolumn{3} {c}{II. $ q= p'(=p)$ (i.e., $p = q \leq m$)}\\[1em]
\hline
\multicolumn{3} {l}{1. $\lambda_n \lesssim \delta_n^{2p}$} \\[.5em]
 $\delta_n^{2p} + (n \delta_n^d)^{-1}$ 
& $\delta_n \asymp n^{-1/(2p+d)}$& $n^{-2p/(2p+d)}$ (**) \\
\hline
\multicolumn{3} {l}{2. $\lambda_n \gtrsim \delta_n^{2p}$}\\[.5em]
 $\lambda_n  + (n \lambda_n^{d/(2p)})^{-1}$ 
& $\lambda_n \asymp n^{-2p/(2p+d)}$ & $n^{-2p/(2p+d)}$ (**) \\
\hline\hline
\multicolumn{3} {c}{III. $ q > p' (= p)$ (i.e., $p < q \leq m$)}\\[1em]
\hline 
\multicolumn{3} {l}{1. $\lambda_n \lesssim \delta_n^{2q}$}\\[.5em]
 $\delta_n^{2p} + (n \delta_n^d)^{-1}$ 
& $\delta_n \asymp n^{-1/(2p+d)}$ & $n^{-2p/(2p+d)}$  (**)\\
\hline
\multicolumn{3} {l}{2. $\lambda_n \gtrsim\delta_n^{2q}   $}\\[.5em]
 $ \lambda_n \delta_n^{2p-2q} +  (n \lambda_n^{d/(2q)})^{-1}$ 
&  $\delta_n \asymp \lambda_n^{1/(2q)}$, $\lambda_n \asymp n^{-2q/(2p
  +d)}$& $n^{-2p/(2p+d)}$  (**)\\
\hline
\end{tabular}
\end{center}
(*) achieving Stone's optimal rate when $p'=p$,  (**) achieving Stone's optimal rate 
\end{table}

Using Proposition~\ref{ts_constant}, Condition~\eqref{eq:cond_cor_ts} can be simplified in each
scenario as follows:
\begin{itemize}
\item Cases I.1, II.1, III.1: $p' > d/2$, $n \delta_n^{2d} \to \infty$.
\item Case I.2: $n \delta_n^{2d} \to \infty$, $\lambda_n / \delta_n^d \to
  0$.
\item Cases I.3, II.2, III.2: $n \delta_n^d \lambda_n^{d/(2q)}  \to \infty$  (or its
  sufficient condition $n \delta_n^{2d} \to \infty$), $\lambda_n / \delta_n^d \to
  0$.
\end{itemize}
An overall sufficient condition for all these conditions to hold is 
$p' > 1/2$, $n \delta_n^{2d} \to \infty$, and $\lambda_n / \delta_n^{d+
  2(q - p)\wedge 0} \to 0$.

From Table~\ref{summary_table_ts}, we observe that, similar to the
univariate case, the asymptotic behavior of the penalized
tensor product spline estimators depend on the interplays among the smoothness of
unknown function, spline degree, penalty order, spline knot number,
and penalty parameter.

\section{Penalized bivariate splines on triangulations}~\label{sec:bivariate-splines}
In this section, we develop results for estimating bivariate functions
using penalized bivariate splines defined on triangulations.
Suppose $\Omega$, the domain of the unknown function to be estimated, is a polygonal
domain in $\mathbb{R}^2$. A collection $\Delta$ of triangles forms
a triangulation of $\Omega$, if these triangles form a partition of $\Omega$ and,
if a pair of triangles in $\Delta$ intersect, then their intersection
is either a common vetex or a common edge. A bivariate spline of
degree $m$ on the triangulation $\Delta$ refers to a function which is
a bivariate polynomial of total degree $m$ on each triangle, and the
pieces join together to ensure some degree of global smoothness.
\cite{lai2007spline}, abbreviated below as \textbf{LS2007}, presents a comprehensive mathematical treatment of
polynomial splines on triangulations. 

We consider a sequence of triangulations $\Delta_n$ of $\Omega$, where
$n$ denote the sample size. This family of triangulations is required
to be quasi-uniform, i.e., the ratio of the longest edge and the
inradius of the triangle is bounded above by a universal constant
for all triangles. This quasi-uniform requirement is satisfied if the
smallest angles in the triangulations are bounded away from zero by a positive constant.
(See Remark 4.2, page 122, \textbf{LS2007}.) 

Given $0\leq q \leq m$ and a triangulation $\Delta$, denote the space
of $C^q$ continuous bivariate splines of degree $m$ 
\[
  \mathcal{S}_m^q(\Delta) = \{s \in C^{q}(\Omega): s|_T \in \mathcal{P}_m \mbox{ for all } T \in \Delta\},
\]
where $s|_T$ denotes the restrict of $s$ to the triangle $T$, and
$\mathcal{P}_m$ is the space of bivariate polynomial functions of total degree $m$.
Given sample size $n$, our estimation space $\mathcal{G}_n$ is taken to be $\mathcal{S}_m^q(\Delta_n)$.
Let $\delta_n$ denote the longest edge in the triangulation
$\Delta_n$. 

Definitions of the function space $L_\infty^p(\Omega)$ given in \eqref{sobolev} and
the penalty functional $J_q(f)$ given in \eqref{eq:penalty-multivar}  extend naturally to
polygonal region $\Omega$. 
The formulation of penalized spline estimator also extend naturally to
the current situation with a straightforward substitution of tensor
product spline spaces by bivariate spline spaces. By inspecting the proofs, we conclude that the asymptotic results for penalized tensor product
spline estimators, i.e., Theorem~3.1 of the main paper, 
Theorem~\ref{est_err_ts} and Corollary~\ref{cor:conv_rates_ts}
presented in Section~\ref{ConvRate_TP}, extend to penalized
bivariate spline estimators on triangulations (corresponding to $d=2$), provided that
Propositions~\ref{prop:approx}--\ref{gu-multivar} extend to the
current context.

We now show that Propositions~\ref{prop:approx}--\ref{gu-multivar}
hold for bivariate splines ($d=2$) on triangulations under the additional assumption $m \geq 2q+3$.
This assumption is needed to ensure $\mathcal{S}_m^q$ has a stable
local minimal determine set and thus has optimal approximation
power. If $m< 2q +3$, $\mathcal{S}_m^q$ does not have optimal
approximation power. See page 141 of \textbf{LS2007}. (Here and blow,
we present the cited results using the notation of the current paper.)

Proposition~\ref{prop:approx} adapted to bivariate splines follows from
Theorems 5.18, 5.19 and 10.10 of the monograph \textbf{LS2007}. To
obtain the bound of
$J(\eta^*)$ for $p \leq q \leq m$ %$q\geq p$
requires a slight extension of the cited
results, as explained below. Using the notations and equation numbers
from the cited monograph, Theorem 5.19 (and thus Theorem~10.10) there still
holds when $\alpha + \beta >m$ and $D_x^\alpha
D_y^\beta Q f$ replaces $D_x^\alpha D_y^\beta (f-Qf)$ in the
inequality (5.19) of the cited monograph. This
is because, for the degree $m$ polynomial $p$ that satisfies inequality (5.18), 
$Qp = p$ and $D^{\alpha}_{x}D^{\beta}_{y}\, p =0$.
Thus $D_x^\alpha D_y^\beta Q f = D_x^\alpha D_y^\beta Q (f-p)$, and
the system of inequalities in the first paragraph of page 140 of \textbf{LS2007} holds
when $\alpha+ \beta >m$, since the Markov inequality (1.5) in the
cited monograph still applies. 

Proposition~\ref{ts_constant} adapted to bivariate splines follows from the
discussion on page~250 of
\cite{huang1998projection}. Proposition~\ref{ts:equiv_norm} is a
result from \cite{huang2003asymptotics}, which covers bivariate
splines on triangulations.
Proposition~\ref{multi:rate_eigenvalue} is originated from the proof of
Theorem~5.3 of \cite{utreras1988} and holds when the domain $\Omega$
is a polygonal.
Proposition~\ref{gu-multivar} clearly covers $d=2$ as a special case.

%\begin{remark}{\rm Proposition \ref{app_bi_spline} puts one
%restriction on the generality of our conditions on $p$, $q$ and $m$:
%The derivative order $q$ in the penalty can not exceed {\color{red}
%$p - 1$}. So we can not investigate $q$ that is equal or higher than
%the smoothness of the true function. }
%\end{remark}

\bibliographystyle{imsart-nameyear}
%\bibliography{Splines&Penalization}

\end{document}